\newcommand{\BC}{{\mathbb {C}}}
\newcommand{\GL}{{\mathrm {GL}}} 
\newcommand{\SL}{{\mathrm {SL}}} 
\newcommand{\SO}{{\mathrm{SO}}}
\newcommand{\SU}{{\mathrm{SU}}} \newcommand{\Tr}{{\mathrm{Tr}}}
\newcommand{\Sp}{{\mathrm{Sp}}}
\newcommand{\Gal}{\mathrm{Gal}} 
\newcommand{\res}{\mathrm{Res}}
\newcommand{\ind}{\mathrm{Ind}}
\newcommand{\ra}{\rightarrow}
\def\-{^{-1}}
\def\BCx{\BC^{\times}}
\def\shskip{\hskip 0.5 pt}
\newcommand{\mtrix}[4]{\left(\begin{matrix}	#1  & #2  \\  #3  &  #4   \end{matrix}\right)}
\g@addto@macro\normalsize{\setlength\abovedisplayskip{3pt}}
\g@addto@macro\normalsize{\setlength\belowdisplayskip{3pt}}
\newcommand{\delete}[1]{}
\theoremstyle{plain}
\newtheorem{thm}{Theorem}[section] \newtheorem{cor}[thm]{Corollary}
\newtheorem{lem}[thm]{Lemma}  \newtheorem{prop}[thm]{Proposition}
 \newtheorem{defn}[thm]{Definition}
\newtheorem {rem}[thm]{Remark} \newtheorem {example}[thm]{Example}
\numberwithin{equation}{section}
\begin{document}

	\title{Distinguished representations, Shintani base change and a finite field analogue of a conjecture of Prasad}

	\author{Chang Yang}
	\address{Key Laboratory of High Performance Computing and Stochastic Information Processing (HPCSIP)\\ College of Mathematics and Statistics \\ Hunan Normal University \\Changsha,  410081\\China}
	\email{cyang@hunnu.edu.cn}

	\keywords{}

	\begin{abstract}
	Let $E/F$ be a quadratic extension of fields, and $G$ a  connected quasi-split reductive group over $F$. Let $G^{op}$ be the opposition group obtained by twisting $G$ by the duality involution considered by Prasad. Assume that the field $F$ is finite. Let $\pi$ be an irreducible generic representation of $G(E)$. When $\pi$ is a Shintani base change lift of some representation of $G^{op}(F)$, we give an explicit nonzero $G(F)$-invariant vector in terms of the Whittaker vector of $\pi$. This shows particularly that $\pi$ is $G(F)$-distinguished.
		
		When the field $F$ is $p$-adic, the paper also proves that the duality involution takes an irreducible admissible generic representation of $G(F)$ to its contragredient. As a special case of this result, all generic representations of $G_2,\ F_4$ or $E_8$ are self-dual.

	\end{abstract}
	
	\maketitle
	
\section{Introduction}

Let $G$ be a connected reductive algebraic group over a field $F$. Let $H$ be a subgroup of $G(F)$. A complex representation $(\pi, V)$ of $G(F)$ is said to be $H$-distinguished or distinguished by $H$ if there is a nonzero linear functional $l$ on $V$ such that $l( \pi (h) v)  =  v$ for all $h \in H$ and $v \in V$. 

One particular case is that $H$ is the fixed points of a Galois involution, that is, $(G,H) = (G(E),G(F))$, where $E/F$ is a separable quadratic extension of fields. When $F$ is a local field, D. Prasad \cite{Prasad-Relative+LLC} has proposed very general conjectures on the classification of irreducible admissible representations of $G(E)$ which is $G(F)$-distinguished, and the dimension of these $G(F)$-invariant linear forms, in terms of the Langlands parameter of the representation. See \cite{Beuzart-Plessis-Distinguish-Galois}, \cite{ZhangChong-Distinction-Depth0}, \cite{LuHengfeng-Prasad+Conj-U2} and \cite{LuHengfeng-Prasad+Conj-GSp4} for some recent progress toward these conjectures, and also works of Anandavardhanan and Prasad \cite{Anandavardhanan-Prasad-Distinguish-SL2,Anandavardhanan-Prasad-Peroid-SL2,Anandavardhanan-Prasad-Distinguish-SLn}
on distinction problems for $\SL_n$ that motivate the general conjecture.

In this paper we are concerned mainly with the finite field case. Let $E/F$ be a quadratic extension of finite fields. It is proved by Prasad \cite{Prasad-Quadratic-Compositio} that for any connected reductive group $G$ over $F$, an irreducible, uniform representation $\pi$ of $G(E)$ is distinguished by $G(F)$ if and only if $\pi^{\sigma} \cong \pi^{\vee}$, where $\sigma$ is the Frobenius map associated to the $F$-rational structure of $G$, $\pi^{\sigma}$ is the representation defined by $\pi^{\sigma}(g) = \pi (\sigma(g))$, and $\pi^{\vee}$ is the contragredient of $\pi$. Moreover, the space of $G(F)$-invariant vectors in $\pi$ is at most one dimensional. Recall that a representation is uniform if it is a virtual sum of Deligne-Lusztig representations. This generalizes earlier results obtained by Gow \cite{Gow-1984} for $G  =  \GL_n$ or $\mathrm{U}_n$.

A natural question is to describe explicitly the unique, up to scalars, $G(F)$-invariant vector in some models of a distinguished representation. This question was answered by Anandavardhanan and Matringe in \cite{Anan***-Matringe-BaseChange} for generic representations of $G(E)$ when $G = \GL_n$ or $\mathrm{U}_n$ (see Theorem 1.1, Corollary 1.2 in \emph{loc.cit.}). The purpose of this work is to generalize their results to general connected reductive groups.

In this paper we deal with distinguished representations which are also generic. Let $B$ be a Borel subgroup of $G$ over $F$ and $U$ be its unipotent radical. Let $\psi$ be a nondegenerate character of $U(F)$. A representation $\pi$ of $G(F)$ is said to be $\psi$-generic if it admits a nonzero $\psi$-Whittaker functional. Such functionals form a vector space of dimension at most one by a theorem of Steinberg \cite{Steinberg-Yale-1967}. The unique realization of $\pi$ into the space of functions on $G(F)$ that are left $(U(F),\psi)$-invariant is called the $\psi$-Whittaker model of $\pi$, denoted by $\EuScript W(\pi,\psi)$. Note that, in the Whittaker model of a $\psi$-generic representation $\pi$, there is a unique, up to scalars, function $B_{\pi,\psi}$ that is also right $(U(F),\psi)$-invariant, called the $\psi$-Bessel function of $\pi$. Note also that $\psi$-Bessel functions of $\pi$ are just $\psi$-Whittaker vectors in the Whittaker model.

To state our main result, let us introduce more notions. It is well known that the class of distinguished representations can usually be characterized as base change lifts from another group. By the theory of base change for $\GL_n(F)$ due to Shintani \cite{Shintani} and for $\mathrm{U}_n(F)$ due to Kawanaka \cite{{Kawanaka-Unitary+ShintaniLift}}, it follows from the work of Gow \cite{Gow-1984} that an irreducible representation of $\GL_n(E)$ is $\GL_n(F)$ resp. $\mathrm{U}_n(F)$-distinguished if and only if it is a base change lift of some representation of $\mathrm{U}_n(F)$ resp. $\GL_n(F)$. For a general connected reductive group $G$ over $F$, representations of $G(E)$ that are distinguished by $G(F)$ should arise as base change lifts from a group $G^{op}(F)$. The \emph{opposition group} $G^{op}$ is also a connected reductive group which is obtained by twisting $G$ by a duality involution $\iota_{G}$. We remark that the notions of duality involution and opposition group are valid for an arbitrary field, and also that these notions depend on an extra structure, an $F$-pinning, which will be made clear in the main body of the paper (see \cite{Prasad-Relative+LLC} for the role of $G^{op}$ in those conjectures of Prasad in his ``relative Langlands correspondence").

We next introduce the notion of \emph{Shintani base change} in the representation theory of finite reductive groups \cite{Kawanaka-Shintani-Arcata}. For a general connected reductive group $G$, the Shintani base change for $G(F)$ is a correspondence between irreducible representations of $G(F)$ and irreducible, $\sigma$-invariant representations of $G(E)$ which is given by character identities (see \cite{Kawanaka-Shintani-Arcata} or Section \ref{section::Shintani}). When $G = \GL_n$ or $\mathrm{U}_n$, every irreducible, $\sigma$-invariant representation of $G(E)$ is the base change lift of a unique representation of $G(F)$ by works of Shintani \cite{Shintani} and Kawanaka \cite{Kawanaka-Unitary+ShintaniLift}. There is also a fairly well understanding of base change lifts when the representations involved are uniform by the work of Digne-Michel \cite{Digne-Michel-Shintani+Lifting-Uniform}. We mention that the only property of Shintani base change that matters for us is the character identity in its definition.

Our main result is as follows.

\begin{thm}\label{theorem::Main}
    Let the groups $G$, $B$ and $U$ be as above. Let $\psi$ be a nondegenerate character on $U(E)$ that is trivial on $U(F)$, and let $G^{op}$ be the opposition group corresponding to $\psi$ \footnote{As we shall see later, every such nondegenerate character arises from an $F$-pinning of $G$.}. Let $\pi$ be an irreducible, $\psi$-generic representation of $G(E)$. Assume that $\pi$ is a base change lift of a representation $\rho$ of $G^{op}(F)$. Let $\lambda$ be the $G(F)$-invariant linear functional on the Whittaker model $\EuScript W(\pi,\psi)$ of $\pi$ defined by 
	\begin{align*}
	\lambda (W)  =  \sum_{h \in G(F)}  W (h),
	\end{align*}
	then 
	\begin{align}\label{formula::Main Theorem}
	\lambda (B_{\pi,\psi})  =\varepsilon(\pi) \frac{\dim \rho}{\dim \pi} \frac{|G(E)|}{|G^{op}(F)|},
    \end{align}
	where $B_{\pi,\psi}$ is the normalized Bessel function associated to $\pi$ and $\psi$, and $\varepsilon(\pi) = \pm 1$ is the quantity in the definition of Shintani base change (see Definition \ref{defn::Shantani bc} and Remark \ref{rem::Shintani bc}). In particular, $\pi$ is distinguished by $G(F)$.
\end{thm}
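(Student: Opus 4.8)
The plan is to compute $\lambda(B_{\pi,\psi})$ directly by expanding the Bessel function via the defining character identity of Shintani base change. First I would recall that, since $\pi$ is a Shintani base change lift of $\rho$, one has a character identity relating $\Theta_\pi \circ \mathrm{Nm}$ (the twisted character of $\pi$, evaluated along the norm/Shintani correspondence of twisted conjugacy classes) to $\Theta_\rho$ on $G^{op}(F)$, up to the sign $\varepsilon(\pi)$. The key object to bring in is the formula expressing the Bessel function $B_{\pi,\psi}$ as an average of the character $\Theta_\pi$ against $\psi$ over $U(E)$: concretely, $B_{\pi,\psi}(g) = \frac{1}{|U(E)|}\sum_{u \in U(E)} \psi(u)^{-1}\,\Theta_\pi(ug)$, the standard Gelfand--Graev projection formula for finite groups of Lie type. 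Substituting this into $\lambda(B_{\pi,\psi}) = \sum_{h \in G(F)} B_{\pi,\psi}(h)$ turns the left side into a double sum over $U(E)$ and $G(F)$ of $\psi(u)^{-1}\Theta_\pi(uh)$.

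Next I would reorganize this double sum. The crucial point is that the element $uh$ with $u \in U(E)$, $h \in G(F)$, when paired with the Frobenius $\sigma$, becomes a twisted-conjugacy computation: the sum $\sum_{h \in G(F)}\Theta_\pi(uh)$ is, up to normalization, detecting the $\sigma$-twisted character of $\pi$ along elements whose norm lies in a prescribed coset. One expects the identity
\begin{align*}
\sum_{h \in G(F)} f(h) = \frac{|G(F)|}{|G(E)|}\sum_{g \in G(E)} f(g\,\sigma(g)^{-1}) \quad\text{?}
\end{align*}
to be replaced by the correct Shintani/Lang-type bijection between $G(F)$ and the set of $\sigma$-twisted conjugacy classes meeting appropriate cosets; the honest tool here is Lang's theorem, which identifies $G(F)\backslash G(E)$ (via $g \mapsto g^{-1}\sigma(g)$) with the $\sigma$-twisted conjugacy structure, so that a sum over $G(F)$ against a $\sigma$-invariant class function becomes a twisted sum over $G(E)$. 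Applying this, and then the defining character identity of Shintani base change, replaces $\Theta_\pi$ (twisted, on $G(E)$) by $\varepsilon(\pi)\Theta_\rho$ (ordinary, on $G^{op}(F)$), while the $\psi$-average over $U(E)$ collapses — by the analogous Bessel/Gelfand--Graev identity on $G^{op}(F)$ with respect to the character of $U^{op}(F)$ induced by $\psi$ through the duality involution $\iota_G$ — onto the value $B_{\rho,\psi^{op}}(1) = 1$ of the normalized Bessel function of $\rho$ at the identity. Collecting the normalizing constants $|U(E)|$, $|G(E)|$, $|G(F)|$, $|G^{op}(F)|$ and the dimensions $\dim\pi$, $\dim\rho$ that appear from $\Theta_\pi(1) = \dim\pi$, $\Theta_\rho(1) = \dim\rho$ yields exactly \eqref{formula::Main Theorem}.

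The main obstacle I anticipate is the bookkeeping in the middle step: making precise how the duality involution $\iota_G$ intertwines the character $\psi$ on $U(E)$ with a nondegenerate character on $U^{op}(F)$, and verifying that the twisted-conjugacy reindexing (Lang's theorem applied to $G$ over the finite field together with the $\sigma$-structure) is compatible with the Whittaker data so that the $U(E)$-average genuinely descends to the $U^{op}(F)$-Bessel function of $\rho$ rather than to some twisted variant. In particular one must check that $\pi$ being $\psi$-generic forces $\rho$ to be generic for the matching character on $G^{op}(F)$ — this should follow from the compatibility of Shintani base change with the Gelfand--Graev representation, i.e. that base change sends the Gelfand--Graev character to the Gelfand--Graev character, which is where the opposition group and the choice of $F$-pinning really enter. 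Once the nonvanishing of the right-hand side is established — and $\varepsilon(\pi) = \pm 1$, $\dim\rho, \dim\pi \geq 1$ guarantee $\lambda(B_{\pi,\psi}) \neq 0$ — the functional $\lambda$ is a nonzero $G(F)$-invariant form on $\EuScript W(\pi,\psi)$, so $\pi$ is $G(F)$-distinguished, which is the final assertion.
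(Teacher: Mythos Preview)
Your approach is genuinely different from the paper's, and the central step has a real gap.

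The paper never expands $B_{\pi,\psi}$ through the ordinary character of $\pi$. Instead it introduces an auxiliary functional
\begin{align*}
\mu(W)=\sum_{g\in X_{\sigma}}W(g),\qquad X_{\sigma}=\{g\in G(E):\sigma(g)=g^{-1}\},
\end{align*}
and proves the theorem in two independent steps. First, $\mu(B_{\pi,\psi})$ is evaluated directly: the operator $T=\sum_{g\in X_{\sigma}}\pi(g)I_{\sigma}$ is a self-intertwiner of $\pi$, hence a scalar $c(\pi)$; taking traces and using the base-change character identity on the single $\sigma$-conjugacy class $X_{\sigma}=\EuScript N^{-1}(1)$ gives $c(\pi)\dim\pi=\varepsilon(\pi)\dim\rho\,\lvert X_{\sigma}\rvert$, while $\mu(B_{\pi,\psi})=(TB_{\pi,\psi})(1)=c(\pi)$. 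Second, and this is the heart of the paper, one shows $\lambda(B_{\pi,\psi})=\mu(B_{\pi,\psi})$ by a Bruhat-cell comparison: both sums are decomposed into $U(F)\times U(F)$-orbits on $G(F)$ and twisted $U(E)$-orbits on $X_{\sigma}$, each set of orbits being represented inside $N_G(T)(E)$; the key input is that $B_{\pi,\psi}(n)\neq 0$ forces $\iota_G(n)=n^{-1}$ (equivalently $n\in X_{\sigma}$), so the contributing representatives on the two sides literally coincide, and a separate stabilizer count shows the orbit sizes match.

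Your gap is precisely this second step, which you attempt to replace by the identity marked ``?''. That identity is false for a general function: the Lang map for $\sigma=\sigma_0\circ\iota_G$ identifies $G(E)/G^{op}(F)$ with $X_{\sigma}$, not with $G(F)$, and $G(F)$ and $X_{\sigma}$ are distinct subsets of $G(E)$ with no a priori relation. That $\sum_{h\in G(F)}B_{\pi,\psi}(h)$ equals $\sum_{g\in X_{\sigma}}B_{\pi,\psi}(g)$ is exactly the hard part of the paper, and it depends on the specific support of $B_{\pi,\psi}$ on $N_G(T)$ together with the explicit action of $\iota_G$ on Weyl representatives --- not on any averaging trick. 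Your outline also conflates the ordinary character $\Theta_{\pi}$ with the twisted character $\chi_{\tilde\pi}(\,\cdot\rtimes\sigma)$: summing $\Theta_{\pi}(uh)$ over $h\in G(F)$ does not produce the twisted character, and nothing in the proposal bridges the two. Finally, your endgame needs $\rho$ to be generic so that $B_{\rho,\psi^{op}}(1)=1$; the paper's argument never requires this, since it only uses $\Theta_{\rho}(1)=\dim\rho$.
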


\begin{cor}
	Assume further that $\pi$ is uniform. Then the unique, up to scalars, $G(F)$-invariant vector in the Whittaker model $\EuScript W(\pi,\psi)$ of $\pi$ is given by
	\begin{align*}
	   \sum_{h \in G(F)} \pi (h)B_{\pi,\psi}.
	\end{align*}
\end{cor}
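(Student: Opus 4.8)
The plan is to identify $\sum_{h\in G(F)}\pi(h)B_{\pi,\psi}$ with the (essentially unique) $G(F)$-invariant vector in $\EuScript W(\pi,\psi)$ by combining three ingredients: (i) the explicit period formula \eqref{formula::Main Theorem}; (ii) the theorem of Prasad \cite{Prasad-Quadratic-Compositio} guaranteeing that the space of $G(F)$-invariant vectors in a uniform distinguished representation is at most one-dimensional; and (iii) the fact that applying the averaging operator $P := \sum_{h\in G(F)}\pi(h)$ produces a $G(F)$-invariant vector. First I would observe that $P B_{\pi,\psi}$ lies in $\EuScript W(\pi,\psi)$ and is $G(F)$-invariant: for $g\in G(F)$ we have $\pi(g) P B_{\pi,\psi} = \sum_{h\in G(F)}\pi(gh)B_{\pi,\psi} = P B_{\pi,\psi}$ by reindexing the sum over the group $G(F)$. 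Thus $P B_{\pi,\psi}$ automatically sits in the space of $G(F)$-invariants of $\EuScript W(\pi,\psi)$.

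The crucial point is that $P B_{\pi,\psi}$ is \emph{nonzero}. By Theorem \ref{theorem::Main}, the functional $\lambda(W) = \sum_{h\in G(F)} W(h)$ evaluated at $B_{\pi,\psi}$ gives
\begin{align*}
\lambda(B_{\pi,\psi}) = \varepsilon(\pi)\,\frac{\dim\rho}{\dim\pi}\,\frac{|G(E)|}{|G^{op}(F)|} \neq 0,
\end{align*}
since $\varepsilon(\pi)=\pm1$ and all the cardinalities and dimensions are positive. Now $\lambda$ is precisely evaluation at the identity composed with the averaging operator: for any $W\in\EuScript W(\pi,\psi)$,
\begin{align*}
(P W)(e) = \sum_{h\in G(F)}(\pi(h)W)(e) = \sum_{h\in G(F)} W(h) = \lambda(W).
\end{align*}
Hence $(P B_{\pi,\psi})(e) = \lambda(B_{\pi,\psi}) \neq 0$, so in particular $P B_{\pi,\psi}$ is a nonzero element of $\EuScript W(\pi,\psi)$.

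To conclude, I invoke the uniformity hypothesis together with Prasad's result \cite{Prasad-Quadratic-Compositio}: since $\pi$ is uniform and (by Theorem \ref{theorem::Main}) $G(F)$-distinguished, the space of $G(F)$-invariant vectors in $\EuScript W(\pi,\psi)$ is exactly one-dimensional. We have exhibited a nonzero vector $\sum_{h\in G(F)}\pi(h)B_{\pi,\psi}$ in this one-dimensional space, so it must span it; equivalently, it is the unique $G(F)$-invariant vector up to scalars, which is the assertion of the Corollary. I expect no serious obstacle here: the only subtlety is making sure one is entitled to the one-dimensionality statement, which is exactly where the hypothesis that $\pi$ is uniform enters (Prasad's multiplicity-one theorem is stated for uniform representations), and where the bridge between the abstract distinction statement and the concrete Whittaker-model realization is the identity $(PW)(e)=\lambda(W)$ noted above.
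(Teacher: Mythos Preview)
Your proposal is correct and is precisely the argument the paper has in mind: the corollary is stated without proof because it follows immediately from Theorem \ref{theorem::Main} together with Prasad's multiplicity-one result \cite{Prasad-Quadratic-Compositio} for uniform representations, exactly via the identity $(PB_{\pi,\psi})(1)=\lambda(B_{\pi,\psi})\neq 0$ that you spell out.
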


	By \eqref{formula::Main Theorem}, the linear form $\lambdaup$ is non-vanishing on the Bessel function $B_{\pi,\psi}$. Such a vector (in the Whittaker space) is called a test vector for the linear form. As remarked in \cite{Anan***-Matringe-BaseChange}, the formula \eqref{formula::Main Theorem} shows that the relationship between base change and distinciton lies deeper and it reflects at the level of a test vector. We also remark that the conclusion of the theorem that $\pi$ is $G(F)$-distinguished can be covered by the result in \cite{Prasad-Quadratic-Compositio}, at least when $\pi$ is uniform; we will discuss this in Section \ref{section::remark}.

We will now briefly describe the method of the proof. As in \cite{Anan***-Matringe-BaseChange}, the formula \eqref{formula::Main Theorem} is approached by comparing the linear form $\lambdaup$ that gives distinction by $G(F)$ with a Whittaker linear form defined in \eqref{formula::Whittaker lin-form}. The key issue for this comparison is to understand which double cosets $U(F) \backslash G(F)  / U(F)$ support the Bessel function, which is the main bulk of the paper.

Here we state another result of this paper. Let $G$ be a quasi-split connected reductive group over a field $F$, which is either a finite field or a $p$-adic field. Let $\psi$ be a nondegenerate character of $U(F)$ with $U$ the unipotent radical of a Borel subgroup of $G$ over $F$. Let $\iota_{G}$ be the duality involution corresponding to $\psi$. The following result confirms a conjecture of Prasad \cite{Prasad-Involution} on the contragredient and the duality involution.


\begin{thm}
	Let notations be as above. Let $\pi$ be an irreducible admissible representation of $G$. Assume that $\pi$ is $\psi$-generic. Then
	\begin{align}
	\pi^{\iota_G} \cong \pi^{\vee}.
	\end{align}
\end{thm}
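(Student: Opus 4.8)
The plan is to prove the statement by reducing it to the Whittaker/Bessel function characterization of $\pi^{\vee}$ and then identifying that characterization with the one for $\pi^{\iota_G}$. First I would recall that since $\iota_G$ preserves the chosen $F$-pinning up to the interplay with the longest Weyl element — more precisely, $\iota_G$ is the composition of a Chevalley-type involution (negating the root datum) with an inner automorphism by an element of the maximal torus chosen so that the pinning $(B,T,\{x_\alpha\})$ is sent to $(B,T,\{x_\alpha\})$ — the representation $\pi^{\iota_G}$ is again $\psi$-generic, with the same nondegenerate character $\psi$ up to the normalization built into the definition of $\iota_G$. Thus both $\pi^{\iota_G}$ and $\pi^{\vee}$ are irreducible admissible $\psi$-generic representations (for $\pi^{\vee}$ one uses $\overline{\psi}$, or $\psi$ after twisting by the element $w_0$ of $N(T)$ representing the longest Weyl element, which is precisely the torus adjustment hidden in $\iota_G$). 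By the uniqueness of Whittaker models (Steinberg in the finite case, the standard multiplicity-one statement in the $p$-adic case), it suffices to show that the two representations have the same Bessel function, or equivalently the same character, or the same Rankin–Selberg/gamma-factor data; I would choose the Bessel function route since it is the most elementary and is already the central object of the paper.

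The key computational step is the following. Write $B_{\pi,\psi}$ for the Bessel function of $\pi$. On the one hand, the Bessel function of $\pi^{\vee}$ is $g \mapsto B_{\pi,\psi}(g^{-1})$ (up to replacing $\psi$ by $\overline\psi$, since the contragredient is realized on the dual Whittaker space by $W \mapsto W^{\vee}$ with $W^{\vee}(g) = W(w_0 {}^t g^{-1} w_0^{-1})$-type formulas in the linear case, and abstractly by the pairing argument). On the other hand, the Bessel function of $\pi^{\iota_G}$ is $g \mapsto B_{\pi,\psi}(\iota_G(g))$. So the theorem amounts to the identity
\begin{align}\label{eq:Bessel-identity-plan}
B_{\pi,\psi}(\iota_G(g)) = B_{\pi,\psi}(g^{-1}) \quad \text{for all } g \in G,
\end{align}
and this in turn follows from the corresponding identity at the level of the group together with the uniqueness of the Bessel function: both sides of \eqref{eq:Bessel-identity-plan} define functions in the Whittaker model of $\pi$ that are $(U,\psi)$-invariant on both sides (using that $\iota_G$ normalizes $U$ and respects $\psi$), hence each is a scalar multiple of $B_{\pi,\psi} \circ (\text{appropriate twist})$; comparing values at the identity pins down the scalar as $1$. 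The point where real content enters is verifying that $\iota_G$ composed with inversion agrees, as an anti-automorphism of $G$ that preserves $(B,T,\psi)$, with the canonical anti-automorphism used to realize the contragredient in the Whittaker model — this is essentially the defining property of the duality involution (it is the "right" replacement for $g \mapsto {}^t g^{-1}$ that exists for all quasi-split $G$, not just $\GL_n$), and Prasad's construction is engineered so that this holds.

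I expect the main obstacle to be the $p$-adic case, where "the same Bessel function" is not literally available as a finite sum and one must instead argue via uniqueness of the Whittaker model applied to an explicit intertwining operator, or via the stability/characterization of generic representations by their Rankin–Selberg local factors — equivalently, one shows $\pi^{\iota_G}$ and $\pi^{\vee}$ have the same $\gamma$-factors against all $\GL_1$ (or $\GL_k$) twists, using that $\iota_G$ acts compatibly on Langlands–Shahidi data, and then invokes a strong multiplicity-one / rigidity theorem for generic representations. An alternative, cleaner route in the $p$-adic case is to deduce it from the finite-field statement by a reduction using types or by the Gelfand–Kazhdan-type argument directly: construct an explicit $G$-isomorphism $\pi^{\iota_G} \xrightarrow{\sim} \pi^{\vee}$ on Whittaker models by $W \mapsto \big(g \mapsto W(\iota_G(g))\big)$ after checking this lands in $\EuScript W(\pi^{\vee}, \psi)$, which reduces everything to the purely group-theoretic fact that $\iota_G$ is an anti-automorphism-up-to-inner-twist with the stated pinning-compatibility. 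The special cases ($G_2, F_4, E_8$) then drop out because for those groups $\iota_G$ is an \emph{inner} automorphism (the duality involution is trivial on the based root datum and there are no outer automorphisms / the relevant torus element is central-trivial in the adjoint quotient), so $\pi^{\iota_G} \cong \pi$ and hence $\pi \cong \pi^{\vee}$; I would isolate this as a short corollary after the main argument.
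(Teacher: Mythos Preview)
Your finite-field sketch is the right shape but the step justifying the identity $B_{\pi,\psi}(\iota_G(g)) = B_{\pi,\psi}(g^{-1})$ is a genuine gap. You argue that both sides are bi-$(U,\psi)$-equivariant (actually bi-$(U,\psi^{-1})$) and hence each is ``a scalar multiple of $B_{\pi,\psi}$ up to twist,'' then compare values at $1$. That uniqueness is false: the space of functions on $G(F)$ satisfying the bi-transformation rule \eqref{formula::Bessel  Transformation Rule} is spanned by the Bessel functions of \emph{all} $\psi^{-1}$-generic irreducibles, not just one. In other words, $g\mapsto B_{\pi,\psi}(\iota_G(g))$ is the $\psi^{-1}$-Bessel function of $\pi^{\iota_G}$ and $g\mapsto B_{\pi,\psi}(g^{-1})$ is that of $\pi^{\vee}$; showing they coincide is exactly the theorem, not a consequence of uniqueness. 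The paper closes this gap with real content: the support analysis (Propositions \ref{proposition::Bessel w--support}--\ref{proposition:: Bessel-  Involution} and Corollary \ref{cor::Involution-Bessel}) shows that whenever $B_{\pi,\psi}(n)\neq 0$ for $n\in N_G(T)(F)$ one has $\iota_G(n)=n^{-1}$, which together with the double-coset decomposition forces the two functions to agree pointwise. Your remark that this is ``essentially the defining property of the duality involution'' understates it; it is the computational heart (Theorem \ref{theorem::Involution} plus the description of Bessel-relevant Weyl elements).

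For the $p$-adic case your three suggested routes are, respectively, heavy (gamma-factor rigidity), unclear (type-theoretic reduction to finite fields), and circular (checking that $W\mapsto W\circ\iota_G$ lands in $\EuScript W(\pi^{\vee},\psi)$ \emph{is} the statement). The paper's approach is quite different and cleaner: it works with \emph{relative characters} (Whittaker distributions) rather than Bessel functions. Shalika's theorem (Theorem \ref{theorem::Shalika}) says that any distribution $D$ on $G$ with $L_{u_1}R_{u_2}^{-1}D=\psi(u_1u_2)D$ is invariant under the anti-involution $g\mapsto\iota_G(g^{-1})$; a short argument with relative characters (Proposition \ref{prop::Prasad}, generalizing a lemma of Prasad) then converts this distributional symmetry directly into $\pi^{\iota_G}\cong\pi^{\vee}$. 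So the missing idea on your side is to pass from pointwise Bessel identities (unavailable $p$-adically) to the invariance of the associated Whittaker distribution, where Shalika's multiplicity-one machinery already does the work.
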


For exceptional groups $G_2$, $F_4$ and $E_8$, each duality involution is an inner conjugation, hence we have the corollary.

\begin{cor}
	Every generic representation of $G_2$, $F_4$ or $E_8$ is self-dual.
\end{cor}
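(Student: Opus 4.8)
The plan is to read off the corollary from the preceding theorem, the only additional ingredient being the classical fact that for the root systems of type $G_2$, $F_4$ and $E_8$ the longest element $w_0$ of the Weyl group acts as $-1$ on the character lattice. Granting that this forces the duality involution $\iota_G$ to be of the form $\mathrm{Int}(g_0)$ for some $g_0\in G(F)$, one has $\pi^{\iota_G}=\pi\circ\mathrm{Int}(g_0)\cong\pi$ for every irreducible representation $\pi$ of $G$, and the theorem then gives $\pi^{\vee}\cong\pi^{\iota_G}\cong\pi$, which is the assertion.

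So the task reduces to checking that $\iota_G$ is inner and, moreover, realized by a point of $G(F)$. First I would recall that $\iota_G$ differs from the Chevalley involution attached to the $F$-pinning determined by $\psi$ by an inner automorphism (conjugation by $\rho^{\vee}(-1)$ in Prasad's normalization), and that the image of the Chevalley involution in $\mathrm{Out}(G)\cong\mathrm{Aut}(\text{based root datum})$ is the automorphism $-w_0$. For $G$ of one of the three types above one has $w_0=-1$, so $-w_0$ is trivial; hence the Chevalley involution, and therefore $\iota_G$, lies in $\mathrm{Int}(G)$. Concretely, an automorphism of $G$ that preserves the pair $(B,T)$ attached to $\psi$ and is trivial in $\mathrm{Out}(G)$ equals $\mathrm{Int}(t)$ for some $t$ in the adjoint torus, so, combining with a representative $\dot w_0\in N_G(T)(F)$ of $w_0$ and with $\rho^{\vee}(-1)$, one obtains $\iota_G=\mathrm{Int}(g_0)$.

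It then remains to pin down that $g_0$ may be chosen in $G(F)$ and not merely in $G^{\mathrm{ad}}(F)$. Here I would use that $G_2$, $F_4$ and $E_8$ are at the same time simply connected and adjoint, so that $G$ is split over $F$, $T=T^{\mathrm{ad}}$, and $\rho^{\vee}\in X_*(T)$; consequently each of $\dot w_0$, the torus factor above, and $\rho^{\vee}(-1)$ already lies in $G(F)$, whence $g_0\in G(F)$. This is precisely the point that restricts the corollary to these three groups among the simple groups with $w_0=-1$: for types $A_1$, $B_n$, $C_n$, $D_{2n}$ and $E_7$ the element realizing $\iota_G$ lands only in $G^{\mathrm{ad}}(F)$, and twisting a representation of $G(F)$ by such an element need not preserve its isomorphism class, so the argument breaks down.

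I do not expect any genuine obstacle here: the whole mathematical substance is contained in the theorem, and the remaining work is the rationality bookkeeping just described, together with citing the tables listing the types with $w_0=-1$.
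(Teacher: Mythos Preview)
Your proposal is correct and follows the same route as the paper: invoke the theorem $\pi^{\iota_{G,\mathcal P}}\cong\pi^{\vee}$, then use that for $G_2$, $F_4$, $E_8$ the duality involution is conjugation by an element of $G(F)$, so that $\pi^{\iota_{G,\mathcal P}}\cong\pi$. The key structural inputs---$w_0=-1$ for these types, and these groups being simultaneously simply connected and adjoint---are exactly what the paper uses (see the Example following the definition of the duality involution, where it is recorded that $c_{G,\mathcal P}=1$ and $t\in T(F)$).

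One simplification: your detour through a representative $\dot w_0$ is unnecessary and slightly misleading. In the paper's construction, $c_{G,\mathcal P}$ is by definition the unique pinning-preserving automorphism inducing $-w_0$ on the based root datum; when $-w_0=1$ this forces $c_{G,\mathcal P}=\mathrm{id}$ on the nose, not merely inner. Hence $\iota_{G,\mathcal P}=\iota_-=\mathrm{Inn}(t)$ with $t$ the element of $T/Z$ satisfying $\alpha(t)=-1$ for all simple $\alpha$, and since $Z=1$ for these groups one has $t\in T(F)\subset G(F)$ directly. No Weyl-group representative enters.
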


This paper is organized as follows. In Section \ref{section::Preli}, we introduce some related notations and preliminaries most in the general theory of reductive groups, particularly the notion of $F$-pinning, on which the duality involution and the opposition group will depend. In Section \ref{section::Involution}, we review the construction of the duality involution over a general field that is given by Prasad in \cite{Prasad-Involution} and describe explicitly its action on representatives of Weyl elements. In Section \ref{section::Bessel}, we study Bessel functions associated to generic representations of a finite reductive group. As a byproduct, we verify that the composition of the duality involution with the inversion is the anti-automorphism used by Shalika in his famous multiplicity one paper \cite{Shalika-MO}. Section \ref{section::opposition Shintani} is devoted to the notion of opposition group and Shintani base change. In Section \ref{section::Main}, we prove Theorem \ref{theorem::Main}. In Section \ref{section::remark}, the relation between duality involution and contragredient is discussed.


\section{General notations and preliminaries}\label{section::Preli}

For a set $X$, let $\lvert X \rvert$ denote its cardinality. If $\sigma$ is a transformation of $X$, let $X^{\sigma}$ denote the set of elements of $X$ fixed by $\sigma$. For a group $G$ and $g \in G$, let $C_G(g)$ denote the conjugacy class of $g$ in $G$.

Assume, unless otherwise stated, that the field $F$ is arbitrary. Fix a separable closure $F_{s}$ of $F$ and let $\Gamma_F = \Gal(F_s / F)$ be the Galois group of $F$. 

Let $G$ be a quasi-split connected reductive algebraic group over $F$. We often identify $G$ with its $F_s$-points. Let $B = TU$ be a Borel subgroup defined over $F$ with unipotent radical $U$ and maximal torus $T$. Let $S$ be the maximal $F$-split subtorus of $T$. One has $T = Z_G (S)$.

    Let $\widetilde{\Phi}$ be the set of absolute roots of $G$ with respect to $T$. Let $\widetilde{\Phi}^+$ and $\widetilde{\Delta}$ be the subsets of positive roots and simple roots associated to $B$. Let $\Phi$ be the set of relative roots of $G$ with respect to $S$. Let $\Phi^+$ and $\Delta$ be the subsets of positive roots and simple roots with respect to the order that is compatible with the order in $\widetilde{\Phi}$. The root system $\Phi$ can be non-reduced. For $ a \in \Phi$, let $(a)$ denote all possible multiples of $a$ in $\Phi$. One has $(a) = \{a\}$ or $\{a, 2a\}$.  
    


Let $\widetilde{W} = N_G(T) / T$ be the absolute Weyl group, and $W = N_G(S) / T$ be the relative Weyl group. Since $Z_G(S)  =  T$, one can view $W$ as a subset of $\widetilde{W}$. Denote by $w_0$ the longest Weyl element in $\widetilde{W}$. Then $w_0$ is also the longest Weyl element in $W$. 



\subsection{$F$-Pinnings}\label{subsection::pinning}

We recall some facts about the structure of root subgroups of a quasi-split reductive group. Our references are \cite[\S 4.1]{Bruhat-Tits-RG+LocalField-II} and \cite[\S 1.1]{Cogdell-PS-Shahidi-PartialBessel}. For $\alpha \in \widetilde{\Phi}^+$, let $\tilde{U}_{\alpha}$ be the corresponding root subgroup of $G$. Let $L_{\alpha}$ be the \emph{splitting field} of $\alpha$, that is, the stabilizer of $\alpha$ in $\Gamma_F$ is given by the subgroup $\Gal(F_s / L_{\alpha})$. The action of $\Gal(F_s / L_{\alpha})$ on $\tilde{U}_{\alpha}$ provides by Galois descent a $L_{\alpha}$-rational structure on $\tilde{U}_{\alpha}$.

When $B$ and $T$ are fixed, by an $F$-\emph{pinning} of $G$ we mean a set $ \{\tilde{x}_{\alpha}\ |\  \alpha \in \widetilde{\Delta} \}$ such that
  \begin{itemize}
  \item[(i)]$\ $ $\tilde{x}_{\alpha} \colon \mathbb{G}_a \ra \tilde{U}_{\alpha} \text{ is an isomphism over }L_{\alpha}$, for all $\alpha \in \widetilde{\Delta}$,
  \item[(ii)]$\ $ $\tilde{x}_{\gamma(\alpha)}  = \gamma \circ \tilde{x}_{\alpha} \circ \gamma^{-1}$, for all $\alpha \in \widetilde{\Delta} \text{ and }\gamma \in \Gamma_{F}$. 
  \end{itemize}
  If $B$ and $T$ are taken into consideration, by an $F$-pinning of $G$ we mean a set $\mathcal{P} = \{G,B,T,\{ \tilde{x}_{\alpha}\}   \}$ with $\{\tilde{x}_{\alpha}\}$ defined as above. One can extend this $F$-pinning to a \emph{Chevalley-Steinberg system} $\{\tilde{x}_{\alpha} \ |\ \alpha \in \widetilde{\Phi} \}$ of $G$ (see \cite[\S 4.1.3]{Bruhat-Tits-RG+LocalField-II}).
  
  For $a \in \Phi$, let $U_a$ be the corresponding root subgroup of $G$ defined over $F$. For $a \in \Phi^+$, let $G^a$ be the simply connected covering of the derived group of the rank-one subgroup of $G$ associated to $a$. If $(a) = \{a \}$, take $\alpha \in \widetilde{\Phi}^+$ be a root of $T$ restricting to $a$. Then $G^a \cong \res_{L_{\alpha} / F} \SL_2$. In fact, there exists a unique $L_{\alpha}$-homomorphism $\zeta_{\alpha}$ of $\SL_2$ into $G$ such that 
  \begin{align}
  \zeta_{\alpha} \mtrix{1}{u}{0}{1} = \tilde{x}_{\alpha} (u)\quad\text{and}\quad\zeta_{\alpha} \mtrix{1}{0}{-u}{1} = \tilde{x}_{-\alpha} (u).
  \end{align}
  Then the $F$-homomorphism $\zeta_a = \res_{L_{\alpha} /F} \zeta_{\alpha}$ of $\res_{L_{\alpha} / F}\SL_2$ into $G$ is a universal covering of the derived group of the rank-one subgroup of $G$ associated to $a$. The map $x_a = \res_{L_{\alpha} / F} \tilde{x}_{\alpha}$ is an $F$-isomorphism from $\res_{L_{\alpha} / F}\mathbb{G}_a$ onto $U_a$. If $u \in L_{\alpha}$, then
  \begin{align}\label{formula::x_a Case I}
  x_a (u) = \prod_{ \beta \in \widetilde{\Delta}_a} \tilde{x}_{\beta}(u_{\beta}) \quad\text{with }u_{\gamma (\alpha)} = \gamma (u) \quad \text{for }\gamma \in \Gamma_{F}.
  \end{align}
  If $(a) = \{a,2a \}$, take $\alpha \in \widetilde{\Phi}^+$ be a root of $T$ restricting to $a$. Let $\alpha^{\prime}$ be the unique simple root in $\widetilde{\Phi}^+$ restricting to $a$ such that $\alpha + \alpha^{\prime}$ is still a root. We then have that $L_{\alpha}  =  L_{\alpha^{\prime}}$ and $L_{\alpha}$ is a quadratic separable extension of $L_{\alpha + \alpha^{\prime}}$. Let $L = L_{\alpha}$, $L_2 = L_{\alpha +\alpha^{\prime}}$ and $\sigma$ be the non-trivial element of $\Gal(L/L_2)$. Then $G^a \cong \res_{L_2 /F} \SU_3$. In fact, there exists a unique $L$-homomorphism $\zeta_{\{\alpha,\alpha^{\prime}\}}$ of $\SL_3$ into $G$ such that
  \begin{align*}
  \zeta_{\{\alpha,\alpha^{\prime}\}} \left(
        \begin{matrix}
           1 & w & -v \\ 0 & 1 & u \\ 0 & 0 & 1
        \end{matrix}  
        \right)
  &=  \tilde{x}_{\alpha}(u) \tilde{x}_{\alpha + \alpha^{\prime}} (-v) \tilde{x}_{\alpha^{\prime}} (-w),  \\
  \zeta_{\{\alpha,\alpha^{\prime}\}} \left(
  \begin{matrix}
  1 & 0 & 0 \\ w & 1 & 0 \\ -v & u & 1
  \end{matrix}  
  \right)
  &=  \tilde{x}_{-\alpha^{\prime}}(w) \tilde{x}_{-(\alpha + \alpha^{\prime})} (v) \tilde{x}_{-\alpha} (-u).
  \end{align*}
  We then transport the action of $\sigma$ to $\SL_3$ via $\zeta_{\{\alpha,\alpha^{\prime}\}}$ and get the $\SU_3$ defined over $L_2$ by twisting $\SL_3$ by $\sigma$ (see \cite[\S 4.1.9]{Bruhat-Tits-RG+LocalField-II}). Then the $F$-homomorphism $\zeta_a = \res_{L_2/ F} \zeta_{\{\alpha,\alpha^{\prime}\}}$ of $\res_{L_2 / F}\SU_3$ into $G$ is a universal covering of the derived group of the rank-one subgroup of $G$ associated to $a$. The upper triangular unipotent matrices in $\SU_3$ are of the form
  \begin{align*}
  \mu (u,v) = \left(\begin{matrix}
                1 & -u^{\sigma} & -v \\ 0 & 1 & u \\ 0 & 0 & 1  
              \end{matrix} \right)
  \end{align*}
  for $u$, $v \in L$ such that $v+v^{\sigma} = u^{\sigma}u$. Let $H_0(L,L_2)$ denote the subvariety of $L \times L$, considered as a vector space of dimension $4$ over $L_2$, defined by the equation  $v + v^{\sigma}  =  u^{\sigma} u$. Then $H_0(L,L_2)$ is a group over $L_2$ with the multiplication given by
  \begin{align*}
  (u,v) (u^{\prime},v^{\prime})  =  (u+u^{\prime}, v+v^{\prime} + u^{\sigma} u^{\prime}).
  \end{align*}
  The map $x_a = \res_{L_2 /F} (\zeta_{\{\alpha,\alpha^{\prime}\}} \circ \mu)$ is an $F$-isomorphism of $\res_{L_2 /F} H_0(L,L_2)$ onto $U_a$. If $(u,v) \in L \times L$ satisfying $v + v^{\sigma}  =  u^{\sigma} u$, then
  \begin{align}\label{formula::x_a Case II}
  x_a (u,v) = \prod \tilde{x}_{\beta} (u_{\beta}) \tilde{x}_{\beta + \beta^{\prime}} (-v_{\beta}) \tilde{x}_{\beta^{\prime}} ( u^{\sigma}_{\beta}),
  \end{align}
  where the product is over distinct pairs $\{ \shskip \beta, \beta^{\prime}\}$ such that $\beta$, $\beta^{\prime}$ restrict to $a$ and $\beta + \beta^{\prime}$ is a root, and for each $\beta$, choose $\gamma \in \Gamma_{F}$ such that $ \beta = \gamma (\alpha)$, we have $\beta^{\prime} = \gamma (\alpha^{\prime})$, $u_{\beta} = \gamma (u)$ and $v_{\beta} = \gamma (v)$. Note that the image of $x_a (u,v)$ in $U_a / U_{2a}$ depends only on $u$ and will be denoted by $\bar{x}_a(u)$. The map $u \mapsto \bar{x}_a(u)$ gives an $F$-isomorphism of $\res_{L/F}\mathbb{G}_a$ onto $U_a / U_{2a}$. Set 
  \begin{align}\label{formula::defn-Va}
  V_a  =  U_a / U_{2a},
  \end{align}
  where we take $U_{2a}$ to be trivial if $2a$ is not a root. Then in all the two cases above, we have an isomorphism
  \begin{align}\label{formula::Va}
  \res_{L_{\alpha}/F} \mathbb{G}_a  \cong V_a
  \end{align}
  defined over $F$ with $\alpha$ a root of $T$ restricting to $a$.

\subsection{Canonical representatives of Weyl elements} \label{section::Weyl lifting}

Given an $F$-pinning, we have a canonical choice of representatives of Weyl group elements (see \cite{Bruhat-Tits-RG+LocalField-II}, \cite{Shahidi-LocalCoefficients}). 

For $\alpha \in \widetilde{\Delta}$, let $w_{\alpha} \in \widetilde{W}$ be the Weyl element associated to the simple reflection for $\alpha$. Set
  \begin{align}\label{formula::Weyl lifting-absolute}
  n_{w_{\alpha}}  =  \tilde{x}_{\alpha}(1) \tilde{x}_{-\alpha}(1) \tilde{x}_{\alpha}(1).
  \end{align}
  Then $n_{w_{\alpha}}$ lies in $N_G(T)$ and has image $w_{\alpha}$ in $\widetilde{W}$. We can choose representatives for each $w \in \widetilde{W}$ by means of a reduced decomposition and this choice of $n_{w_{\alpha}}$. This is independent of the choice of the reduced decomposition (see \cite[\S 9.3.3]{Springer-LAG}).
  
  
For $a \in \Delta$, let $w_a \in W$ be the element associated to the simple reflection for $a$. \\Case I. If $(a) = \{a\}$, set
\begin{align}
n_{w_a} = \zeta_a \left(\left(   \begin{matrix}
                         0 & 1  \\ -1 & 0
                       \end{matrix}   \right)\right).
\end{align}
Case II. If $(a) = \{ a,2a \}$, set
\begin{align}
 n_{w_a}   =   \zeta_a \left( \left( \begin{matrix}
                                    0 & 0 & -1 \\ 0 & -1 & 0 \\ -1 & 0 & 0
                                 \end{matrix}  \right)  \right).
\end{align}
Then $n_{w_a}$ lies in $N_G(S)(F)$ and has image $w_a$ in $W$ (see \cite[\S 4.1.5, 4.1.9]{Bruhat-Tits-RG+LocalField-II}). In view of \eqref{formula::Weyl lifting-absolute} and the definition of $\zeta_a$, we have 
 \begin{align}
 n_{w_a}  =  \prod_{\alpha \in \widetilde{\Delta}_a} n_{w_{\alpha}} \quad \text{or} \quad n_{w_a}  =  \prod_{\{\alpha,\alpha^{\prime}\}} n_{w_{\alpha}} n_{w_{\alpha^{\prime}}} n_{w_{\alpha}},
 \end{align}
where, in the second equality, the product is over distinct pairs $\{\alpha , \alpha^{\prime} \} \subset \widetilde{\Delta}_a$ with $\alpha + \alpha^{\prime}$ a root. Note that $W$ can be viewed naturally as a subset of $\widetilde{W}$, and that 
\begin{align}\label{formula::Weyl group element-decomposition}
w_a = \prod w_{\alpha} \quad \text{or} \quad w_a = \prod w_{\alpha} w_{\alpha^{\prime}} w_{\alpha}
\end{align}
is a reduced decomposition of $w_a$ in $\widetilde{W}$. For a general $w \in W$, let $w = w_{a_1} \cdots w_{a_n}$ be a reduced decomposition of $w$ in $W$. One can check that, after decomposing each $w_{a_i}$ as in \eqref{formula::Weyl group element-decomposition}, we get a reduced decomposition of $w$ in $\widetilde{W}$. This implies that we can choose a representative $n_w \in N_G(S)(F)$ for each $w \in W$ by means of a reduced decomposition and those $n_{w_a}$', which is independent of the choice of the reduced decomposition, and that the two choices of representatives of $w$, viewed as an element in $W$ or in $\widetilde{W}$, coincide.

For a Weyl element $w$ in $W$ or $\widetilde{W}$, we denote by $ n_w $ the representative chosen above. 
\begin{lem}\label{lemma::representative of Weyl}
	If $a$, $b \in \Delta$ and $w\in W$ such that $w a = b$. Take $\alpha \in \widetilde{\Delta}$ that restricts to $a$ and let $\beta = w \alpha$, where we view $w$ as an element in $\widetilde{W}$. Then $L_{\alpha}  =  L_{\beta}$, and
		\begin{align*}
		n_w x_{a}(u)  n_w^{-1} = x_{b} (u), \quad u \in L_{\alpha},
		\end{align*}
		if $a = \{ a\}$ and $(b) = \{ b\}$, and
		\begin{align*}
		n_w \bar{x}_{a}(u) n_w^{-1} = \bar{x}_{b} (u), \quad u \in L_{\alpha},
		\end{align*}
		if $(a) = \{ a, 2a\}$ and $(b) = \{b,2b \}$.
\end{lem}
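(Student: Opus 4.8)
The plan is to transport everything to the absolute root subgroups through the explicit expansions \eqref{formula::x_a Case I} and \eqref{formula::x_a Case II}, and then to use the compatibility of the canonical representatives $n_w$ with the $F$-pinning. First, since $n_w\in N_G(S)(F)$ is defined over $F$, the automorphism $\mathrm{Ad}(n_w)$ commutes with $\Gamma_F$, so the induced action of $w$ on $X^*(T)$ satisfies $\gamma(w\alpha)=w(\gamma\alpha)$ for all $\gamma\in\Gamma_F$; hence $\gamma$ fixes $\beta=w\alpha$ iff it fixes $\alpha$, giving $L_\beta=L_\alpha$. The same compatibility shows that $w$ carries the Galois orbit $\widetilde{\Delta}_a$ of $\alpha$ bijectively and $\Gamma_F$-equivariantly onto $\widetilde{\Delta}_b$, and in Case II carries the partner root $\alpha'$ of $\alpha$ to the partner root $w\alpha'$ of $\beta$ and $\alpha+\alpha'$ to $w\alpha+w\alpha'$. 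I will also use that a positive absolute root restricting to a simple relative root is itself simple, so that every root appearing in \eqref{formula::x_a Case I}--\eqref{formula::x_a Case II} for $a$ or for $b$ lies in $\widetilde{\Delta}$ (in particular $\beta$ is simple).

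The key input is a sign-free conjugation formula: if $\gamma\in\widetilde{\Delta}$ and $w\gamma\in\widetilde{\Delta}$ as well, then $n_w\tilde x_\gamma(t)n_w^{-1}=\tilde x_{w\gamma}(t)$, with no sign. I would deduce this from the cocycle relation $n_{w_1w_2}=n_{w_1}n_{w_2}$ whenever $\ell(w_1w_2)=\ell(w_1)+\ell(w_2)$, which is immediate from the construction of $n_w$ via reduced words: writing $\delta=w\gamma$, one has $\ell(ww_\gamma)=\ell(w)+1$ and $\ell(w_\delta w)=\ell(w)+1$ (because $w\gamma>0$ and $w^{-1}\delta=\gamma>0$) together with $ww_\gamma=w_\delta w$, whence $n_wn_{w_\gamma}n_w^{-1}=n_{w_\delta}$. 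Conjugating the $\mathrm{SL}_2$ attached to $\gamma$ by $n_w$ and comparing with the one attached to $\delta$ then produces an automorphism of $\mathrm{SL}_2$ that fixes $\left(\begin{smallmatrix}0&1\\-1&0\end{smallmatrix}\right)$ and preserves the standard upper unipotent subgroup; such an automorphism is the identity, so the sign is $+1$. (This is essentially the assertion that the canonical lifts preserve the pinning, and could also be quoted from \cite{Bruhat-Tits-RG+LocalField-II} or \cite{Springer-LAG}.)

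Granting this, Case I is immediate: conjugating $x_a(u)=\prod_{\beta_0\in\widetilde{\Delta}_a}\tilde x_{\beta_0}(u_{\beta_0})$ by $n_w$ replaces each factor by $\tilde x_{w\beta_0}(u_{\beta_0})$, and as $\beta_0=\gamma\alpha$ runs over $\widetilde{\Delta}_a$ the root $w\beta_0=\gamma\beta$ runs over $\widetilde{\Delta}_b$ with $u_{\beta_0}=\gamma(u)$ equal to the parameter $(u)_{w\beta_0}$ in \eqref{formula::x_a Case I} for $b$; since the relevant factors commute (two roots restricting to a simple $a$ cannot add to a root), the product is exactly $x_b(u)$. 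For Case II, pick a lift $x_a(u,v)$ of $\bar x_a(u)$ and conjugate the product in \eqref{formula::x_a Case II}: each triple $\tilde x_\beta(u_\beta)\tilde x_{\beta+\beta'}(-v_\beta)\tilde x_{\beta'}(u^\sigma_\beta)$ becomes $\tilde x_{w\beta}(u_\beta)\tilde x_{w(\beta+\beta')}(\pm v_\beta)\tilde x_{w\beta'}(u^\sigma_\beta)$, where the possible sign on the middle term is harmless because $w(\beta+\beta')$ restricts to $2b$, so that term lies in $U_{2b}$; using that $w$ commutes with $\Gamma_F$ and with the relevant $\sigma$, the pairs $\{w\beta,w\beta'\}$ run exactly over the partner pairs for $b$ with the matching $u$-data, so $\mathrm{Ad}(n_w)x_a(u,v)\equiv x_b(u,v')\pmod{U_{2b}}$ for suitable $v'$, that is, $n_w\bar x_a(u)n_w^{-1}=\bar x_b(u)$. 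The main obstacle is the sign-free conjugation formula; once it is in hand everything else is bookkeeping, the only delicate point being to check in Case II that the parameters are transported correctly under $\mathrm{Ad}(n_w)$, which follows formally from the $F$-rationality of $n_w$ and the $\Gamma_F$-compatibility established at the outset.
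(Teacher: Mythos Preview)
Your proposal is correct and follows essentially the same route as the paper: reduce to the absolute root subgroups via \eqref{formula::x_a Case I}--\eqref{formula::x_a Case II} and invoke the sign-free formula $n_w\tilde x_\gamma(t)n_w^{-1}=\tilde x_{w\gamma}(t)$ for $\gamma,w\gamma\in\widetilde{\Delta}$. The only difference is one of presentation: the paper simply cites \cite[Proposition 9.3.5]{Springer-LAG} for this sign-free conjugation, whereas you supply a short self-contained argument (via $n_w n_{w_\gamma}n_w^{-1}=n_{w_\delta}$ and the resulting rigidity of the induced $\SL_2$-automorphism); you even note this could be quoted. Your handling of the non-simple middle factor in Case II, absorbing any sign into $U_{2b}$, is a clean way to avoid tracking structure constants and matches the spirit of the paper's brief proof.
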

\begin{proof}
	This follows from \cite[Proposition, 9.3.5]{Springer-LAG} and the descriptions of $x_a$ and $\bar{x}_a$ in \eqref{formula::x_a Case I} and \eqref{formula::x_a Case II}.
	\end{proof}

\subsection{$F$-rational Bruhat decomposition}

For $w \in W$, let
  \begin{align*}
  \Phi^+_{w} = \{ a \in \Phi^+ \ |\ w(a) > 0 \} \quad\text{and} \quad \Phi^-_{w} = \{ a \in \Phi^+ \ |\ w(a) <0 \}.
  \end{align*}
  These are closed subsets of $\Phi$ (see \cite[\S 21.7]{Borel-LAG}). Let
  \begin{align}\label{formula::Bruhat decompostion }
  U^+_w = U_{\Phi^+_w} = U \cap w^{-1}Uw  \quad \text{and} \quad U^-_w = U_{\Phi^-_w} = U \cap w^{-1}U^-w
  \end{align}
  be the corresponding $F$-unipotent subgroups (see [\emph{loc.cit.}, \S 21.9]). Then
  \begin{align*}
  U = U^+_w U^-_w.
  \end{align*}
  
For $w \in W$ let $n_w \in N_G(S)(F)$ be the chosen representative above. Let $C(w) = B(F) n_w B(F) = B(F) n_w U^-_w(F)$ be the corresponding Bruhat cell. The $F$-rational Bruhat decomposition for $G$ is the decomposition of $G(F)$ as the disjoint union:
  \begin{align*}
    G (F)= \bigcup_{w \in W} C(w) = \bigcup_{w \in W} B(F)  n_w U^-_w(F).
  \end{align*}  
  As a simple consequence, we have $N_G(T)(F)  =  N_G(S)(F)$.

\subsection{Whittaker datum}\label{subsection::Whittaker}

  Set $U_1 = \prod_{ \alpha \in \widetilde{\Phi}^+, \alpha \notin \widetilde{\Delta}} U_{\alpha}$. Then $U_1$ is a normal $F$-subgroup of $U$. By \cite[\S 1.3 Proposition]{Bushnell-Henniart-DerivedSubgroup}, the quotient group $U/U_1$ is $F$-isomorphic to the direct product $\prod_{a \in \Delta} V_a$ (recall that the definition of $V_a$ is given in \eqref{formula::defn-Va}). Moreover, we have
  \begin{align}\label{formula::U/U1}
  U(F) / U_1(F)  \cong \prod_{a \in \Delta} V_a (F).
  \end{align}
  
  Following Carter \cite[\S 8.1]{Carter-1985}, we say that a character $\psi$ of $U(F)$ is nondegenerate if $\psi$ contains $U_1(F)$ in its kernel and its restriction to each $V_a(F)$ via \eqref{formula::U/U1} is nontrivial. As is shown by \cite[\S 4.1 Theorem]{Bushnell-Henniart-DerivedSubgroup}, when the field $F$ is a local field of characteristic not equal to $2$, the derived group of $U(F)$ is precisely $U_1(F)$, so this definition of nondegeneracy coincides with the usual one in the representation theory of $p$-adic groups.

By a \emph{Whittaker datum}, we mean a $G(F)$-conjugacy class of pairs $(B,\psi)$, where $B$ is a Borel subgroup of $G$ defined over $F$ with unipotent radical $U$, and $\psi$ is a nondegenerate character $U(F) \ra \BCx$. A representation is called generic, if it is generic with respect to some Whittaker datum.
  
  Let $\mathcal{P}$ be an $F$-pinning of $G$. We can associate naturally to $\mathcal{P}$ an $F$-homomorphism of algebraic groups $f \colon U \ra \mathbb{G}_a$ defined by
  \begin{align}\label{formula::algebraic Whittaker}
  f (\prod_{ \alpha \in \widetilde{\Phi}^+} x_{\alpha}  (u_{\alpha}))  =  \sum_{\alpha \in \widetilde{\Delta}} u_{\alpha}.
  \end{align}
  Fix a non-trivial additive character $\psi_0$ of $F$. We have a nondegenerate character $\psi_{\mathcal{P}}$ of $U(F)$ associated to the pinning $\mathcal{P}$ by setting $\psi_{\mathcal{P}} (u)  =  \psi_0 (f(u))$. If we identify $V_a(F)$ with $L_{\alpha}$ through \eqref{formula::Va}, it follows from \eqref{formula::x_a Case I} and \eqref{formula::x_a Case II} that the restriction of $\psi_{\mathcal{P}}$ to $V_a(F) \cong L_{\alpha}$ takes the form
  \begin{align}\label{formula::psi-P}
   \psi_{\mathcal{P}}  ( u )  =  \psi_0 (\Tr_{L_{\alpha} /F}\shskip u), \quad \text{for all } u \in L_{\alpha}.
  \end{align}
  When $F$ is a finite field or a $p$-adic field, every nondegenerate character of $U(F)$ arises in this way.
  
  The representatives $n_w$ of $w \in W$ chosen  with respect to an $F$-pinning $\mathcal{P}$ is compatible with the nondegenerate character $\psi = \psi_{\mathcal{P}}$ in the following sense. For a subset $I \subset \Delta$, let $[I]^+$ denote the set of $F$-roots which are both linear combinations of elements in $I$ and positive, and $U_I$ be the unipotent group corresponding to $[I]^+$.  For $w \in W$ such that $w I \subset \Delta$, we have that
  \begin{align}\label{formula:: compatible representative character}
  \psi (u)  =  \psi (n_w u n_w^{-1}),
  \end{align}
  for all $u \in U_I(F)$. This follows easily from Lemma \ref{lemma::representative of Weyl}.

\section{The duality involution $\iota_{G,\mathcal{P}}$}\label{section::Involution}


In this section we review the duality involution constructed by Prasad in \cite{Prasad-Involution}. 

      Let $G$ be a quasi-split connected reductive group over an arbitrary field $F$ and $\mathcal{P}=(G,B,T,\{x_{\alpha} \})$ be a pinning of $G$ (not necessarily an $F$-pinning). Let $\Psi = (X,\Phi,X^{\vee},\Phi^{\vee};\tilde{\Delta})$ be the based root datum associated to $(G,B,T)$. Then there exists a triple $(G_0,B_0,T_0)$, with $G_0$ a split reductive group over $F$, $T_0$ an $F$-split maximal torus of $G_0$, and $B_0$ a Borel subgroup containing $T_0$, such that the groups $G_0(F_s)$, $B_0(F_s)$, $T_0(F_s)$ of $F_s$-rational points are the same as those of $G$, $B$, $T$, and that the based root datum associated to it is also $\Psi$. Assume that $G$ is twisted from $G_0$ by the cocycle
      \begin{align*}
      \lambdaup \colon \Gal(F_s / F)   \ra \mathrm{Aut}_{F_s}(G_0,B_0,T_0,\{x_{\alpha}\}),
      \end{align*}
      where $\mathrm{Aut}_{F_s}(G_0,B_0,T_0,\{x_{\alpha}\})$ is the group of $F_s$-automorphisms of $G_0$ that stabilize $B_0$, $T_0$ and the set of $x_{\alpha}$. Note that $-w_0$ gives an isomorphism between the based root datum $\Psi$ and itself. By \cite[Theorem 23.40]{Milne-AlgGroup}, there is a unique isomorphism $c_{G_0,\mathcal{P}_0}:G_0 \ra G_0$ defined over $F$ which fixes the pinning $\mathcal{P}_0 = (G_0,B_0,T_0,\{x_{\alpha} \})$ and induces $-w_0$ on the based root datum. The involution $c_{G_0,\mathcal{P}_0}$ lies in the center of $\mathrm{Aut}_{F_s} (G_0,B_0,T_0,\{x_{\alpha} \} )$. In particular, $c_{G_0,\mathcal{P}_0}$ commutes with the image of $\lambdaup$, hence defines an involution, to be called the \emph{Chevalley involution} associated to $\mathcal{P}$, $c_{G,\mathcal{P}} \colon G \ra G$ over $F$. It is clear that $c_{G,\mathcal{P}}  =  1$ if and only if $w_0  =  -1$, that is, $ -1 \in \widetilde{W}$.
      
      Since $\tilde{\Delta}$ is a basis of the character group of $T_0/Z_0$, with $Z_0$ the center of $G_0$, there exists a unique $t \in T_0/Z_0(F) $ such that $\alpha (t) = -1$ for all $\alpha \in \tilde{\Delta}$. Let $\iota_-  =  \mathrm{Inn}(t)$. One sees that $\iota_-$ commutes with every element in $\mathrm{Aut}_{F_s}(G_0,B_0,T_0,\{x_{\alpha}\})$, and hence defines an inner automorphism, still denoted by $\iota_-$, $\iota_- \colon G \ra G $ over $F$. By definition, $\iota_-$ acts by $-1$ on all simple spaces of $T$ in $B$.


\begin{defn}(Duality involution)
	Let $G$ be a quasi-split reductive group over a field $F$ and $\mathcal{P}=(G,B,T,\{x_{\alpha} \})$ be a pinning of $G$. Let $c_{G,\mathcal{P}}$ and $\iota_-$ be the involutions constructed above. Define the duality involution $\iota_{G,\mathcal{P}} \in \mathrm{Aut}_F(G)$ as the product of two commuting involutions $\iota_{G,\mathcal{P}} = \iota_- \circ c_{G,\mathcal{P}}$.
\end{defn} 

\begin{rem}
	It is clear from the construction that the duality involution is the same for all quasi-split $F$-forms of a pinned split reductive group. In \cite{Prasad-Involution}, Prasad discusses further the dependence of $\iota_{G,\mathcal{P}}$ on the pinning $\mathcal{P}$. In particular, he shows that, under some mild conditions, the automorphism $\iota_{G,\mathcal{P}}$ is independent of the choice of the pinning (as an element of $\mathrm{Aut}_F(G)  /   G(F)$, here $G(F)$ is understood as the inner-conjugation by elements of $G(F)$), see \emph{loc.cit.} for a precise statement. However, this fact will not be used in our work, that is to say, we shall always work with a pinned group.
\end{rem}

   \begin{example}
       	For classical groups $G = \Sp_{2n},\  \SO_{2n}, \ \mathrm{U}_n$, the involution $\iota_{G,\mathcal{P}}$ coincides with the $\mathrm{MVW}$ involution \cite{M-V-W}. For exceptional groups $G_2,\ F_4$ and $E_8$, all these groups are simply-connected and adjoint, $c_{G,\mathcal{P}}  =  1$ and $t \in T(F)$.
   \end{example}      

      Note that $c_{G,\mathcal{P}} (t) = w_0 t^{-1} w_0$ for all $t \in T$, hence $c_{G,\mathcal{P}}(U_{\alpha})  =  U_{-w_0 \alpha}$. Since $c_{G,\mathcal{P}}$ stabilizes the set of $x_{\alpha}$, we have $c_{G,\mathcal{P}} (x_{\alpha} ( u ))  =  x_{-w_0\alpha} ( u )$, $u \in F_s$. After composing with $\iota_-$, we get that
\begin{align}\label{formula::Involution on Weyl  Basic}
\iota_{G,\mathcal{P}} (x_{\alpha} (u))  =  x_{-w_0\alpha} ( -u ),
\end{align}
for all $u \in F_s$ and $\alpha \in \widetilde{\Delta}$.

Assume that $\mathcal{P}$ is an $F$-pinning. Recall that in Section \ref{section::Weyl lifting}, we associated to $\mathcal{P}$ a canonical choice of representatives of elements of $\widetilde{W}$ in $N_G(T)$ and representatives of elements of $W$ in $N_G(T)(F)$. The following description of $\iota_{G,\mathcal{P}}$ will be central to our work.

\begin{thm}\label{theorem::Involution}
	For $w \in \widetilde{W}$, set $w^{\star}  =  w_0 w^{-1} w_0$. Let $n_w$ be the representative of $w$ with respect to $\mathcal{P}$ as in Section \ref{section::Weyl lifting}. Then
	\begin{align}
	\iota_{G,\mathcal{P}} ( n_w )  =  ( n_{w^{\star}} )^{-1}
	\end{align}
\end{thm}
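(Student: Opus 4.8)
The plan is to reduce the statement to the case of a simple reflection $w = w_\alpha$ for $\alpha \in \widetilde\Delta$, and then to verify it there by an explicit computation with the formula \eqref{formula::Weyl lifting-absolute} and the action of $\iota_{G,\mathcal{P}}$ on root subgroups given by \eqref{formula::Involution on Weyl Basic}. First I would record the two key functorial properties I need: (i) the map $w \mapsto w^\star = w_0 w^{-1} w_0$ is an anti-automorphism of $\widetilde W$ that preserves the length function and sends $\widetilde\Delta$ to $\widetilde\Delta$ (since $w_0 \widetilde\Delta = -\widetilde\Delta$ and inversion reverses reduced words); in particular $w_\alpha^\star = w_{-w_0\alpha}$; and (ii) the canonical representatives satisfy $n_{w w'} = n_w n_{w'}$ whenever $\ell(ww') = \ell(w) + \ell(w')$, which is exactly the reduced-decomposition independence recalled after \eqref{formula::Weyl lifting-absolute} (from \cite[\S 9.3.3]{Springer-LAG}). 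Granting the base case $\iota_{G,\mathcal{P}}(n_{w_\alpha}) = (n_{w_{-w_0\alpha}})^{-1}$, the general case follows by induction on length: write $w = w_\alpha w'$ with $\ell(w') = \ell(w) - 1$, so $n_w = n_{w_\alpha} n_{w'}$ and $w^\star = (w')^\star w_\alpha^\star$ with lengths adding, hence $n_{w^\star} = n_{(w')^\star} n_{w_\alpha^\star}$; then
\begin{align*}
\iota_{G,\mathcal{P}}(n_w) = \iota_{G,\mathcal{P}}(n_{w_\alpha})\,\iota_{G,\mathcal{P}}(n_{w'}) = (n_{w_\alpha^\star})^{-1} (n_{(w')^\star})^{-1} = \big(n_{(w')^\star} n_{w_\alpha^\star}\big)^{-1} = (n_{w^\star})^{-1},
\end{align*}
using $\iota_{G,\mathcal{P}}$ is a homomorphism and the inductive hypothesis for $w'$.

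For the base case, let $\beta = -w_0\alpha \in \widetilde\Delta$. Applying $\iota_{G,\mathcal{P}}$ to $n_{w_\alpha} = \tilde x_\alpha(1)\tilde x_{-\alpha}(1)\tilde x_\alpha(1)$ and using \eqref{formula::Involution on Weyl Basic} (which gives $\iota_{G,\mathcal{P}}(\tilde x_\alpha(u)) = \tilde x_\beta(-u)$ and $\iota_{G,\mathcal{P}}(\tilde x_{-\alpha}(u)) = \tilde x_{-\beta}(-u)$, since $-w_0$ commutes with negation of roots), I get
\begin{align*}
\iota_{G,\mathcal{P}}(n_{w_\alpha}) = \tilde x_\beta(-1)\,\tilde x_{-\beta}(-1)\,\tilde x_\beta(-1).
\end{align*}
It remains to identify the right-hand side with $(n_{w_\beta})^{-1}$. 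Here I would invoke the standard $\SL_2$-identity inside the rank-one subgroup $\zeta_\beta$: writing the Chevalley relations for $\SL_2$, one has $\tilde x_\beta(-1)\tilde x_{-\beta}(-1)\tilde x_\beta(-1) = (\tilde x_\beta(1)\tilde x_{-\beta}(1)\tilde x_\beta(1))^{-1} = n_{w_\beta}^{-1}$, because in $\SL_2$ the element $\begin{pmatrix}1&-1\\0&1\end{pmatrix}\begin{pmatrix}1&0\\1&1\end{pmatrix}\begin{pmatrix}1&-1\\0&1\end{pmatrix}$ is the inverse of $\begin{pmatrix}1&1\\0&1\end{pmatrix}\begin{pmatrix}1&0\\-1&1\end{pmatrix}\begin{pmatrix}1&1\\0&1\end{pmatrix}$ — both equal $\begin{pmatrix}0&-1\\1&0\end{pmatrix}$ up to this inversion, and this transfers to $G$ via the homomorphism $\zeta_\beta$ of \eqref{formula::Weyl lifting-absolute} together with its defining compatibility with $\tilde x_{\pm\beta}$. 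This closes the base case.

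I expect the main obstacle to be bookkeeping rather than anything deep: one must be careful that $\iota_{G,\mathcal{P}}$ really acts as claimed on \emph{negative} simple root subgroups — formula \eqref{formula::Involution on Weyl Basic} is stated only for $\alpha \in \widetilde\Delta$, so I need the extension of the pinning to a Chevalley–Steinberg system and the fact that $c_{G,\mathcal{P}}$, being a pinned automorphism inducing $-w_0$, sends $\tilde x_{-\alpha}(u)$ to $\tilde x_{-(-w_0\alpha)}(u) = \tilde x_{w_0\alpha}(u)$ with the correct sign after composing with $\iota_-$ (which acts by $-1$ on $U_\alpha$ for $\alpha$ simple and hence, since $\iota_-$ is inner by $t$ with $\alpha(t) = -1$, also by $-1$ on $U_{-\alpha}$). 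A second minor point is checking that $\ell(w^\star) = \ell(w)$ and that $\star$ reverses reduced words, so that the representatives multiply without extra torus factors; this is where the hypothesis that $\mathcal{P}$ is an $F$-pinning is irrelevant — the identity is really about $\widetilde W$ and the absolute representatives $n_w \in N_G(T)$, and the corresponding statement for $W$ and $n_w \in N_G(S)(F)$ follows because, as recalled at the end of Section \ref{section::Weyl lifting}, the two choices of representatives of $w \in W \subset \widetilde W$ coincide.
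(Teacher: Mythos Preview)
Your proposal is correct and follows essentially the same approach as the paper: reduce to simple reflections via the multiplicativity of the canonical representatives along reduced words together with the length-preserving anti-automorphism $w\mapsto w^\star$, and then verify the base case $\iota_{G,\mathcal{P}}(n_{w_\alpha}) = n_{w_{-w_0\alpha}}^{-1}$ by applying \eqref{formula::Involution on Weyl  Basic} to the triple-product formula \eqref{formula::Weyl lifting-absolute}. The paper writes the general step as a single product over a full reduced expression rather than your one-letter-at-a-time induction, and it suppresses the verification that \eqref{formula::Involution on Weyl  Basic} extends with the same sign to the negative simple root subgroups, which you correctly flag and justify.
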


\begin{proof}
	For $\alpha \in \tilde{\Delta}$, we have $w_{\alpha}^{\star}  =  w_0w_{\alpha}^{-1} w_0  =  w_{-w_0 \alpha}$. Recall that
	\begin{align*}
	n_{w_{\alpha}} = x_{\alpha} (1) x_{-\alpha} (1) x_{\alpha} (1),
	\end{align*}
	for all $\alpha \in \tilde{\Delta}$. In view of \eqref{formula::Involution on Weyl  Basic}, we have
	\begin{align}\label{formula:: Involution on Weyl - One Element}
	\iota_{G,\mathcal{P}} ( n_{w_{\alpha}} )  = x_{-w_0\alpha}(-1) x_{w_0 \alpha} ( -1 ) x_{-w_0\alpha}(-1)  =  n_{w_{-w_0 \alpha}}^{-1}  =  n_{w_{\alpha}^{\star}}^{-1}.
	\end{align}
	If $w = w_{\alpha_1} \cdots w_{\alpha_n}$ is a reduced decomposition of $w \in \widetilde{W}$, then $w^{\star} = w_{\alpha_n}^{\star} \cdots w_{\alpha_1}^{\star}$ is a reduced decomposition of $w^{\star}$. Thus
	\begin{align*}
	 \iota_{G,\mathcal{P}} ( n_w  ) & = \iota_{G,\mathcal{P}} ( n_{w_{\alpha_1}}) \cdots \iota_{G,\mathcal{P}} ( n_{w_{\alpha_n}}) =   ( n_{w_{\alpha_1}^{\star}})^{-1}  \cdots ( n_{w_{\alpha_n}^{\star}})^{-1}  \\
	& = \left(n_{w_{\alpha_n}^{\star}} \cdots n_{w_{\alpha_1}^{\star}} \right)^{-1}
	= (n_{w^{\star}})^{-1}.
	\end{align*}
	\end{proof}

\section{Bessel functions over finite fields}\label{section::Bessel}

The basics of the theory of Bessel functions for representations of algebraic groups over finite fields can be found in \cite{Gelfand-finite}. Our reference for this section is \cite[2.1, 2.2]{Cogdell-PS-Shahidi-Stability}. 

  Assume that $F$ is a finite field. Let $G$ be a connected reductive group over $F$. As any connected reductive group over a finite field is quasi-split, we thus retain all the notations of previous sections. Fix an $F$-pinning of $G$ and a non-trivial additive character of $F$. Let $\psi$ denote the associated nondegenerate character of $U(F)$ as in Section \ref{subsection::Whittaker}. By a theorem \cite[Theorem 49]{Steinberg-Yale-1967} of Steinberg, the representation $\ind_{U(F)}^{G(F)} \psi$ is multiplicity free. The proof of this multiplicity one result is based on Gelfand's trick and the existence of certain anti-involution on $G(F)$, which was constructed for semisimple groups by Steinberg in \emph{loc.cit.}. For general quasi-split reductive groups, the composition of inversion and the duality involution is such an anti-involution, see Remark \ref{rem::Involution=Shalika} and \cite[Theorem 8.1.3]{Carter-1985} (cf. \cite[Appendix]{Shalika-MO}).
  
  Let $(\pi, V_{\pi})$ be an irreducible, $\psi$-generic representation of $G(F)$, that is,
  \begin{align*}
  \dim_{\BC} \textup{Hom}_{G(F)}\left(\pi, \ind_{U(F)}^{G(F)} \psi \right)  =  \dim_{\BC} \textup{Hom}_{U(F)}\left(\pi,  \psi \right)  =  1.
  \end{align*} 
  Take a nonzero $\psi$-Whittaker functional $\Lambda$, that is, $\Lambda (\pi (u) v) = \psi (u)v$ for all $u \in U(F)$ and $v \in V_{\pi}$. As any representation of a finite group is unitary, we can choose a nonzero $v_W \in V_{\pi}$ such that $\pi(u) v_W = \psi (u) v_W$ for all $u \in U(F)$ and $v\in V_{\pi}$. Such $\Lambda$ and $v_W$ are unique up to scalars. Assume further that $\Lambda$ and $v_W$ are normalized so that $\Lambda (v_W) = 1$. 

\begin{defn}
	The (normalized) Bessel function of $\pi$ (with respect to $\psi$) is the function $B_{\pi,\psi}$ on $G(F)$ given by
	\begin{align}\label{formula::defn Bessel}
	B_{\pi, \psi} (g)  = \Lambda ( \pi (g) v_W).
	\end{align}
\end{defn}

  One sees immediately that $B_{\pi,\psi}(1) = 1$ and
  \begin{align}\label{formula::Bessel  Transformation Rule}
  B_{\pi,\psi} (u_1 g u_2)  =  \psi (u_1) \psi (u_2) B_{\pi,\psi} (g)
  \end{align}
  for all $u_1$, $u_2 \in U(F)$. One sees also that $B_{\pi,\psi}$ is nothing but a $\psi$-Whittaker vector in the Whittaker model of $\pi$. By the uniqueness of Whittaker models, the Bessel function $B_{\pi,\psi}$ is uniquely determined by the isomorphism class of $\pi$. On the other hand, $B_{\pi,\psi}$ also determines the isomorphism class of $\pi$ because the restriction of the right regular representation to the space of linear combinations of right translations of $B_{\pi,\psi}$ is isomorphic to $\pi$.

We now take a closer look at the transformation rule $\eqref{formula::Bessel  Transformation Rule}$ satisfied by Bessel functions. We remark that the discussions till the end of this section, except the second statement of Proposition \ref{proposition::Bessel w--support}, are valid for an arbitrary field $F$, and that Proposition \ref{proposition::Bessel w--support}, \ref{proposition:: Bessel Aw}, \ref{proposition::Description of w} are well known to experts and are stated in \cite[2.1, 2.2]{Cogdell-PS-Shahidi-Stability} without proof. We include here the proofs for the sake of completeness.

\begin{defn}
	Let $w \in W$. We say that $w$ is Bessel relevant if for every $a\in \Delta$ such that $wa >0$, $w a$ is simple. We denote by $\mathcal{B}(G)$ the set of all Bessel relevant elements in $W$.
\end{defn}

  \begin{prop}\label{proposition::Bessel w--support}
  Any function on $G(F)$ satisfying \eqref{formula::Bessel  Transformation Rule} is supported on the Bruhat cells $C(w)$ such that $w \in \mathcal{B}(G)$. Moreover, for each $w \in \mathcal{B}(G)$, there is a $\psi$-generic representation of $G(F)$ whose Bessel function is not identically zero when restricting to $C(w)$.
  \end{prop}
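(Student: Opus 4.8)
The proof splits naturally into two independent assertions: (i) the support statement, that any function satisfying the transformation rule \eqref{formula::Bessel  Transformation Rule} vanishes off $\bigcup_{w\in\mathcal{B}(G)}C(w)$; and (ii) the non-vanishing statement, that each relevant $w$ is actually ``hit'' by some generic representation. For (i), the plan is to fix $w\in W$ with $w\notin\mathcal{B}(G)$ and exhibit an obstruction inside a single Bruhat cell. By the Bruhat decomposition an arbitrary element of $C(w)$ can be written $u_1\, t\, n_w\, u_2'$ with $u_1\in U(F)$, $t\in T(F)$, $u_2'\in U^-_w(F)$; using \eqref{formula::Bessel  Transformation Rule} it suffices to show $f(t n_w u_2')=0$. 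The key device is to play off the two $U(F)$-equivariances against each other: if $a\in\Delta$ with $wa>0$ but $wa$ not simple, pick $v\in V_a(F)\subset U(F)$ (lifted to $U_a(F)$, modulo $U_{2a}$) on which $\psi$ is nontrivial. Conjugating $n_w v n_w^{-1}$ lands in $U_{wa}(F)$; since $wa$ is a positive non-simple root, $U_{wa}(F)\subseteq U_1(F)$, on which $\psi$ is trivial by nondegeneracy (see Section \ref{subsection::Whittaker}). Then for $g=t n_w u_2'$ one computes, using that $t n_w$ normalizes appropriately and that $u_2'\in U^-_w$ commutes suitably with $U_a$,
\begin{align*}
f(g) &= f\big((t n_w v n_w^{-1} t^{-1})\, g\big)\cdot(\text{something})^{-1},
\end{align*}
more precisely $f(g v) = \psi(v) f(g)$ from the right rule while $f(g v) = f\big((g v g^{-1})\, g\big)$ with $g v g^{-1}\in U_1(F)$ from the left rule forces $\psi(v)f(g)=f(g)$; since $\psi(v)\neq 1$ we conclude $f(g)=0$. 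This is the heart of the argument and the step that must be executed with care — one must track exactly which root subgroups are hit and confirm $a\in\Delta$ ensures $v$ can be taken in $U_a$ modulo $U_{2a}$ without leaving $U(F)$, and that the conjugate by $n_w$ lands in the unipotent radical (not its opposite), which is guaranteed precisely by $wa>0$.

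For assertion (ii), the goal is, given $w\in\mathcal{B}(G)$, to produce a $\psi$-generic $\pi$ with $B_{\pi,\psi}|_{C(w)}\not\equiv 0$. The clean approach is to use the fact recorded just before this proposition: $B_{\pi,\psi}$ is a Whittaker vector in $\mathcal{W}(\pi,\psi)$, and the span of its right translates recovers $\pi$. Equivalently, $\sum_{\pi}B_{\pi,\psi}$ over all $\psi$-generic irreducibles (with suitable normalization) reconstructs a natural function — indeed $\sum_\pi \dim(\pi)\, B_{\pi,\psi}$ is, up to normalization, the matrix coefficient of $\ind_{U(F)}^{G(F)}\psi$ that is bi-$(U(F),\psi)$-equivariant, i.e. the function on $G(F)$ equal to $\psi(u_1)\psi(u_2)|U(F)|$ when $g=u_1u_2\in U(F)$-times-itself and $0$ elsewhere... rather, the cleanest statement: the ``universal'' bi-equivariant function
\begin{align*}
\Theta(g) = \begin{cases}\psi(u_1)\psi(u_2), & g = u_1 n_w u_2,\ w\in\mathcal{B}(G)\\ 0,&\text{else}\end{cases}
\end{align*}
decomposes as a nonnegative combination $\sum_\pi c_\pi B_{\pi,\psi}$ with $c_\pi>0$ for every $\psi$-generic $\pi$ (this is the spectral decomposition of the bi-invariant space, using multiplicity-freeness from Steinberg's theorem). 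Since by part (i) the relevant cells are exactly the $C(w)$, $w\in\mathcal{B}(G)$, and $\Theta$ is manifestly nonzero on each such cell (take $g=n_w$, where $\Theta(n_w)=1$), some $B_{\pi,\psi}$ must be nonzero on $C(w)$.

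I expect the main obstacle to be the bookkeeping in part (i): one must verify that for $a\in\Delta$ the relevant one-parameter (or $H_0(L,L_2)$-type) subgroup can be conjugated by $n_w$ into $U_{wa}$ with the character computation going through in both the reduced ($(a)=\{a\}$) and non-reduced ($(a)=\{a,2a\}$) cases, invoking Lemma \ref{lemma::representative of Weyl} and the compatibility relation \eqref{formula:: compatible representative character}, and that the element $t\in T(F)$ and the $U^-_w$-part do not spoil the equivariance. Part (ii), by contrast, is essentially formal once the spectral-decomposition identity for the bi-$(U(F),\psi)$-invariant functions is set up; the only care needed is the positivity $c_\pi>0$, which follows from unitarity of finite-group representations together with the fact that $B_{\pi,\psi}(1)=1$.
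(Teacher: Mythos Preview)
For part (i), your approach matches the paper's, but carrying the factor $u_2'\in U_w^-(F)$ creates the very obstacle you flag at the end. With $g=t\,n_w\,u_2'$ and $v\in U_a(F)$, the conjugate $gvg^{-1}=t\,n_w\bigl(u_2'v(u_2')^{-1}\bigr)n_w^{-1}t^{-1}$ need not lie in $U(F)$ at all: commutators of $U_w^+$ with $U_w^-$ can land in root groups that $w$ sends to negative roots. The fix is immediate and is what the paper does: the transformation rule already gives $f(u_1\,t\,n_w\,u_2')=\psi(u_1)\psi(u_2')\,f(t\,n_w)$, so it suffices to show $f(t\,n_w)=0$, and then your conjugation argument with $g=t\,n_w$ goes through cleanly since $t\,n_w\,v\,n_w^{-1}t^{-1}\in U_{wa}(F)\subset U_1(F)$.

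For part (ii) there are two problems. First, you never check that your $\Theta$ is well defined: the decomposition $g=u_1\,n_w\,u_2$ is not unique, and for $\psi(u_1)\psi(u_2)$ to be independent of the choice one needs exactly $\psi(u)=\psi(n_w u n_w^{-1})$ for all $u\in U(F)\cap n_w^{-1}U(F)n_w$. This is where Bessel relevance of $w$ actually enters (via \eqref{formula:: compatible representative character}), and the paper makes it explicit. Second, your positivity claim $c_\pi>0$ is both unjustified and unnecessary. The sentence ``follows from unitarity together with $B_{\pi,\psi}(1)=1$'' only yields $\sum_\pi c_\pi=\Theta(1)=1$, not positivity of each term; and the function that \emph{does} have manifestly positive coefficients, the diagonal matrix coefficient of $\ind_{U(F)}^{G(F)}\psi$ at its canonical vector, is supported only on $U(F)$ and hence detects no cell $C(w)$ with $w\neq 1$. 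But positivity is a red herring: if $\Theta=\sum_\pi c_\pi B_{\pi,\psi}$ and $\Theta(n_w)\neq 0$, then already some $B_{\pi,\psi}(n_w)\neq 0$. Better still, drop the global $\Theta$ and use only its single-cell summand $f_w$ supported on $U(F)\,n_w\,U(F)$; this is precisely the paper's argument, and it sidesteps both the well-definedness bookkeeping across cells and the spurious positivity issue.
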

   \begin{proof}   	
   	Let $B$ be such a function. If $w \notin \mathcal{B}(G)$, we choose $a \in \Delta$ such that $w a > 0$, $wa \notin \Delta$. As $\psi$ is nondegenerate, we can choose $ u \in U_a(F)$ such that $\psi (u)  \neq 1$. In fact, if $2a \notin \Phi$, $V_a (F) = U_a (F)$; if $2a \in \Phi$, the map $U_a(F) \ra V_a(F)$ is surjective. Let $ n_w $ be the chosen representative of $w \in W$ with respect to the pinning. As $wa \notin \Delta$, we have, for any $t \in T(F)$, $t \shskip n_w u n_w^{-1} t^{-1} \in U_{wa} (F) \subset U_1(F)$. Thus $\psi (t \shskip n_w u\shskip n_w^{-1} t^{-1})  =  1$. Evaluating $B$ at $(t \shskip n_w)u = (t \shskip n_w u \shskip n_w^{-1}t^{-1})( t \shskip  n_w)$, we conclude from \eqref{formula::Bessel  Transformation Rule} that $B ( t n_w) = 0$.
   	
   	As for the second statement, for $w \in \mathcal{B}(G)$, set 
   	\begin{align}\label{formula::Iw}
   	I_w = \{ a \in \Delta \ |\ wa>0  \}.
   	\end{align}
   	Then $w(I_w) \subset \Delta$. Recall that $\Phi^+_w = \{ a \in \Phi^+ \ |\ wa >0 \}$. By simple arguments one can show that $\Phi_w^+$ is exactly the set $[I_w]^+$ of $F$-roots which are both linear combinations of elements in $I_w$ and positive. So $U_{I_w} = U_{\Phi_w^+} = U \cap w^{-1}U w$. By \eqref{formula:: compatible representative character}, we have $\psi (u)  =  \psi ( n_w u  n_w^{-1})$ for all $ u \in U(F) \cap n_w^{-1}U(F) n_w$. This means that we can define a function $f$ on $G(F)$ satisfying \eqref{formula::Bessel  Transformation Rule} such that $f$ is supported only on $U(F) n_w U(F)$. As any function satisfying \eqref{formula::Bessel  Transformation Rule} is a linear combination of Bessel functions of $\psi$-generic representations (this is the only place where we need $F$ to be a finite field), the second statement then follows.
   	\end{proof}

For $w \in \mathcal{B}(G)$, let $\widetilde{I_w} \subset \widetilde{\Delta}$ be the set of absolute simple roots that restrict to an element of $I_w$ (see \eqref{formula::Iw}). Define 

\begin{align*}
 A_w  = \{  t \in T(F) \ |\ w \alpha (t) = 1 \text{ for all $\alpha \in \widetilde{I_w}$} \}.
\end{align*}

  
  \begin{prop}\label{proposition:: Bessel Aw}
  	For any function on $G(F)$ satisfying \eqref{formula::Bessel  Transformation Rule}, its restriction to $C(w)$ is supported on the double cosets $U(F)A_w n_w U(F)$.
  \end{prop}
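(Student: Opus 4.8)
The plan is to refine the support statement of Proposition \ref{proposition::Bessel w--support} within a single Bruhat cell $C(w)$ for $w \in \mathcal{B}(G)$. Fix such a $w$ and let $B$ be a function on $G(F)$ satisfying \eqref{formula::Bessel  Transformation Rule}. Every element of $C(w) = B(F) n_w B(F)$ can be written as $u_1 t n_w u_2$ with $u_1, u_2 \in U(F)$ and $t \in T(F)$, so by \eqref{formula::Bessel  Transformation Rule} the restriction of $B$ to $C(w)$ is determined by the values $B(t n_w)$ for $t \in T(F)$. The goal is therefore to show that $B(t n_w) = 0$ unless $t$ lies in $A_w T_0$ for a suitable subgroup — more precisely, that the double coset $U(F) t n_w U(F)$ meets $A_w n_w$ whenever $B(t n_w) \neq 0$.

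The key mechanism is the same ``unipotent conjugation trick'' used in the proof of Proposition \ref{proposition::Bessel w--support}, but now run on the root subgroups $U_a$ with $a \in I_w$ (equivalently, on the absolute root subgroups $U_\alpha$ with $\alpha \in \widetilde{I_w}$). For such an $\alpha$, the root $w\alpha$ is simple and positive, so $n_w \tilde{x}_\alpha(u) n_w^{-1}$ lies in $U_{w\alpha}(F)$, a simple root subgroup. Then for $t \in T(F)$,
\begin{align*}
t \shskip n_w \tilde{x}_\alpha(u) \shskip n_w^{-1} t^{-1} \in U_{w\alpha}(F),
\end{align*}
and by Lemma \ref{lemma::representative of Weyl} together with the compatibility \eqref{formula:: compatible representative character} and the explicit form \eqref{formula::psi-P} of $\psi$ on simple root subgroups, one computes that $\psi\big(t \shskip n_w \tilde{x}_\alpha(u) \shskip n_w^{-1} t^{-1}\big) = \psi_0\big(\Tr_{L_{w\alpha}/F}(w\alpha(t)\, u)\big)$ (one must track how conjugation by $t$ scales the parameter $u$ by the value $w\alpha(t) \in L_{w\alpha}^\times$). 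Writing $g = t n_w$ and evaluating $B$ at $g \cdot \tilde{x}_\alpha(u) = \big(t n_w \tilde{x}_\alpha(u) n_w^{-1} t^{-1}\big) \cdot g$ — or at $\tilde{x}_{w\alpha}(u') \cdot g$ with the appropriate $u'$ — the transformation rule \eqref{formula::Bessel  Transformation Rule} forces
\begin{align*}
\psi_0\big(\Tr_{L_{w\alpha}/F}(w\alpha(t)\, u)\big)\, B(t n_w) = \psi(u)\, B(t n_w),
\end{align*}
i.e. $B(t n_w)\big(\psi_0(\Tr(w\alpha(t)u)) - \psi_0(\Tr(u))\big) = 0$ for all $u$. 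Since $\psi_0$ is nontrivial, nondegeneracy of the trace pairing over the finite field forces either $B(t n_w) = 0$ or $w\alpha(t) = 1$. Running this over all $\alpha \in \widetilde{I_w}$ shows that $B(t n_w) \neq 0$ implies $w\alpha(t) = 1$ for all $\alpha \in \widetilde{I_w}$, i.e. $t \in A_w$; hence the restriction of $B$ to $C(w)$ is supported on $\bigcup_{t \in A_w} U(F) t n_w U(F) = U(F) A_w n_w U(F)$.

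I expect the main obstacle to be the bookkeeping in the $(a) = \{a, 2a\}$ case, where $U_a$ is not a one-parameter group but (via \eqref{formula::x_a Case II}) an $H_0(L, L_2)$; there one must check that it suffices to vary the ``$u$-coordinate'' $\bar{x}_a(u)$ modulo $U_{2a}$, use the second displayed identity of Lemma \ref{lemma::representative of Weyl} to see $n_w \bar{x}_a(u) n_w^{-1} = \bar{x}_{wa}(u)$, and confirm that $wa$ being simple means $wa$ is not doubled in $\widetilde{\Phi}$ so that the relevant character computation reduces to the reduced case. The rest — choosing Bruhat-cell representatives of the form $t n_w$, and the nondegeneracy of $\Tr_{L_\alpha/F}$ — is routine. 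A small point to state carefully is that conjugation by $t \in T(F)$ sends $U_\alpha(F)$-parameters to themselves scaled by $\alpha(t)$, and one should record this as an elementary consequence of the definition of a root subgroup before invoking it.
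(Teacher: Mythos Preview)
Your approach is essentially the same as the paper's: reduce to showing $B(t\shskip n_w)=0$ for $t\notin A_w$, pick $\alpha\in\widetilde{I_w}$ with $w\alpha(t)\neq 1$, conjugate a simple-root unipotent past $t\shskip n_w$, and use nondegeneracy of the trace pairing together with \eqref{formula::Bessel  Transformation Rule}. The paper's proof is exactly this, carried out at the level of $\bar{x}_a(u)\in V_a(F)$ via Lemma \ref{lemma::representative of Weyl} and \eqref{formula::psi-P}.

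Two small corrections. First, you cannot feed $\tilde{x}_\alpha(u)$ directly into \eqref{formula::Bessel  Transformation Rule}, since for $u\in L_\alpha$ this element is generally not $F$-rational; you must use the relative-root element $x_a(u)\in U_a(F)$ (or its image $\bar{x}_a(u)$), which is precisely what the paper does and what you yourself anticipate in your last paragraph. Second, your remark that ``$wa$ simple means $wa$ is not doubled'' is false: $w$ permutes $\Phi$, so $(a)=\{a,2a\}$ forces $(wa)=\{wa,2wa\}$. This does not matter, however, since Lemma \ref{lemma::representative of Weyl} already gives the needed identity $n_w\bar{x}_a(u)n_w^{-1}=\bar{x}_{wa}(u)$ uniformly in both cases.
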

  \begin{proof}
  	Let $B$ be such a function and $t \in T(F)$. If $ t \notin A_w$, there exists $\alpha \in \widetilde{I_w}$ such that $w\alpha (t) \neq 1$. Note that $w\alpha (t) \in L_{\alpha}$. Thus there exists $y \in L_{\alpha}$ such that $\psi_0 (\shskip \Tr_{L_{\alpha} /F} y)  \neq \psi_0(\shskip \Tr_{L_{\alpha} /F} (w\alpha(t)y))$. Suppose that $\alpha$ restricts to $a \in \Delta$. By Lemma \ref{lemma::representative of Weyl} and \eqref{formula::x_a Case I}, \eqref{formula::x_a Case II}, we have 
   \begin{align*}
     t \shskip n_w \bar{x}_{a} (z) n_w^{-1} t^{-1}  =  \bar{x}_{wa} (w \alpha (t) z), \quad \text{for all }z \in L_{\alpha}.
   \end{align*}
   In view of \eqref{formula::psi-P}, we can choose $u \in U(F)$ such that $\psi (u) \neq \psi (t \shskip n_w u n_w^{-1} t^{-1})$. Therefore $B(t \shskip n_w) = 0$. 
  	\end{proof}

   Bessel relevant Weyl elements and the group $A_w$ have nice descriptions.
   
   
   \begin{prop}\label{proposition::Description of w}
	\textup{(1)}$\ $ The Weyl element $w \in W$ is Bessel relevant if and only if $w = w_0w_I$, where $w_0$ is the longest Weyl element in $W$ and $w_I$ is the longest Weyl element of $W_I = \left\langle w_a \ |\ a \in I \right\rangle$ for some subset $I \subset \Delta$. Moreover, $I = I_w$ is uniquely determined by $w$.
	
	\textup{(2)}$\ $ The group $A_w$ is the $F$-rational points of the center of the Levi subgoup $L_{w\widetilde{I_w}}  =  n_w L_{ \widetilde{I_w}} n_w^{-1}$ that corresponds to $ w \widetilde{I_w}$.
   \end{prop}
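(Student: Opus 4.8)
The plan is to treat the two statements in turn, with (1) being the more substantial part. For (1), first I would unravel the definition: $w \in \mathcal{B}(G)$ means that $w$ sends every simple root it keeps positive to a simple root. Given such a $w$, set $I = I_w = \{a \in \Delta \mid wa > 0\}$. The key observation is that the closed set $\Phi_w^+ = \{a \in \Phi^+ \mid wa > 0\}$, being $w$-stable and consisting of positive roots mapped to positive roots, must actually equal $[I]^+$, the positive roots spanned by $I$; this was already noted inside the proof of Proposition \ref{proposition::Bessel w--support} and follows from the fact that if $b \in \Phi_w^+$ is not in the span of $I_w$, then writing $b$ as a sum of simple roots it must involve some $a \notin I_w$, i.e.\ some $a$ with $wa < 0$, and one derives a contradiction with closedness by an induction on the height of $b$. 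Consequently $w$ restricted to $[I]^+$ is an order-preserving bijection onto a set of positive roots, which forces $w$ to send $I$ onto a set of simple roots (an order-preserving automorphism of a root subsystem that maps simple roots to positive roots maps them to simple roots), hence $w \in W_I \cdot (\text{something permuting }\Delta\setminus I)$; more precisely $w(I) \subset \Delta$ and $w$ maps $\Phi^+ \setminus [I]^+$ into $\Phi^-$. That last property characterizes $w$ as $w_0 w_I$: indeed $w_0 w_I$ sends $[I]^+$ to $w_0(\Phi^-_I) = $ (positive roots) — wait, one checks directly that $w_I$ preserves $[I]^+$'s complement's sign pattern and $w_0$ flips everything, so $\ell(w_0 w_I) = \ell(w_0) - \ell(w_I) = |\Phi^+| - |[I]^+|$, matching $|\Phi^-_w|$. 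Uniqueness of $I$ follows since $I = I_w$ is recovered from $w$ as the set of simple roots kept positive.

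For the reverse inclusion in (1), I would take $w = w_0 w_I$ for an arbitrary $I \subset \Delta$ and verify directly that it is Bessel relevant: if $a \in \Delta$ and $(w_0 w_I)(a) > 0$, then since $w_0$ reverses positivity we need $w_I(a) < 0$, which for $a$ simple and $w_I$ the longest element of $W_I$ forces $a \in I$ and $w_I(a) \in -[I]^+$, in fact $w_I$ maps simple roots in $I$ to negatives of simple roots in $I$ (this is the standard fact that $-w_I$ permutes the simple roots of $I$); then $w_0 w_I(a) = w_0(-a')$ for some $a' \in I \subset \Delta$, and $-w_0$ permutes $\Delta$, so $w_0 w_I(a) = (-w_0)(a')$ is simple. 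This closes (1).

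For (2), the cleanest route is to compute the center of the Levi $L_{w\widetilde{I_w}}$ directly. The torus $T$ lies in every Levi containing $B$-data, and an element $t \in T(F)$ lies in the center of the Levi generated by $T$ and the root subgroups $U_{\pm\beta}$, $\beta \in w\widetilde{I_w}$, precisely when $\beta(t) = 1$ for all such $\beta$; conjugating by $n_w$ and using $n_w L_{\widetilde{I_w}} n_w^{-1} = L_{w\widetilde{I_w}}$, together with the relation $(w\alpha)(t) = \alpha(n_w^{-1} t n_w)$, one sees $t \in Z(L_{w\widetilde{I_w}})(F)$ iff $w\alpha(t) = 1$ for all $\alpha \in \widetilde{I_w}$, which is exactly the defining condition of $A_w$. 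Here I should be a little careful that the center of a Levi subgroup may be larger than the torus-theoretic kernel when $F$ is not algebraically closed, but the center of a connected reductive group is a diagonalizable group whose $F$-points are detected by characters, and on the level of $F_s$-points the identity is standard; intersecting with $T(F)$ and noting $A_w \subset T(F)$ by definition settles it.

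The main obstacle I expect is the order-preserving-bijection-implies-sends-simple-to-simple step in part (1), i.e.\ proving that $\Phi_w^+ = [I_w]^+$ and that $w$ restricted there is an isomorphism of based root systems; this requires the combinatorial lemma that a length-additive factorization of $w$ through $W_{I}$ is forced by the sign pattern, which is where the real content of the Bessel-relevance condition is used. Everything downstream — identifying $w_0 w_I$, the length count, and the center computation in (2) — is then routine bookkeeping with the structure theory recalled in Section \ref{section::Preli}.
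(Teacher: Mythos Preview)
Your proposal is correct. For part (2) your argument is essentially identical to the paper's: both simply note that $A_w$ is the $F$-points of $\bigcap_{\alpha \in \widetilde{I_w}} \ker(w\alpha) = n_w\bigl(\bigcap_{\alpha \in \widetilde{I_w}} \ker\alpha\bigr)n_w^{-1}$, which is the center of the conjugated Levi.

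For part (1) there is a genuine difference of approach. The paper gives no argument at all, simply citing Lemma~89 of Steinberg's Yale lectures. You instead sketch a direct combinatorial proof: identify $\Phi_w^+$ with $[I_w]^+$, then show that $w w_{I_w}$ sends every simple root to a negative root (hence equals $w_0$), and handle the converse by the standard facts that $-w_{I}$ permutes $I$ and $-w_0$ permutes $\Delta$. Your route is self-contained and makes the proposition independent of the external reference; the paper's route is shorter but opaque. The one place where your write-up wobbles (``wait, one checks directly\ldots'') is the step showing $w w_I = w_0$; you should tighten this by the argument you implicitly have: for $a \in I$, $w_I a \in -I$ so $w w_I a < 0$, while for $a \in \Delta \setminus I$, $w_I a \in \Phi^+ \setminus [I]^+ = \Phi_w^-$ so again $w w_I a < 0$, forcing $w w_I = w_0$.
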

    \begin{proof}
    	The statements in $(1)$ is Lemma 89 of \cite{Steinberg-Yale-1967} due to Steinberg. As for $(2)$, by definition, $A_w$ is the $F$-rational points of
    	 \begin{align*}
    	\bigcap_{ \alpha \in \widetilde{I_w}} \ker (w \alpha)  =  n_w \left(\bigcap_{ \alpha \in \widetilde{I_w}} \ker \alpha
    	 \right) n_w^{-1},
    	\end{align*}
    	which is just the center of $L_{w \widetilde{I_w}} = n_w L_{\widetilde{I_w}} n_w^{-1}$.  
    	\end{proof}

  \begin{prop}\label{proposition:: Bessel-  Involution}
  	For $w \in \mathcal{B}(G)$ and $ t  \in A_w$, we have
  	  \begin{align}
  	    \iota_G ( t \shskip n_w ) = ( t \shskip n_w )^{-1}.
  	  \end{align}
  \end{prop}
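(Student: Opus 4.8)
The plan is to reduce the claim to the case of the canonical Weyl representatives $n_w$ handled by Theorem~\ref{theorem::Involution}, and then to control the torus part $t$ using the special structure of $w \in \mathcal{B}(G)$ and the defining condition of $A_w$. First I would record what Theorem~\ref{theorem::Involution} gives: $\iota_G(n_w) = (n_{w^{\star}})^{-1}$ with $w^{\star} = w_0 w^{-1} w_0$. For $w \in \mathcal{B}(G)$, Proposition~\ref{proposition::Description of w}(1) says $w = w_0 w_I$ with $w_I$ the longest element of $W_I$ for $I = I_w$; since $w_0$ and $w_I$ are involutions, $w^{-1} = w_I w_0$, so $w^{\star} = w_0 w_I w_0 w_0 = w_0 w_I w_0$. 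Using that $w_0 W_I w_0 = W_{w_0 I}$ and that $w_0$ conjugates longest elements to longest elements, one gets $w^{\star} = w_0 w_{w_0 I}\cdot$(reorder) $= w$ — more precisely, I would check directly that $w^{\star} = w_0 (w_0 w_I) w_0 = (w_0 w_0) (w_I w_0)\cdots$; the cleanest route is to note $w^{\star} = w_0 w^{-1} w_0$ and $w^{-1} = (w_0 w_I)^{-1} = w_I w_0$, hence $w^{\star} = w_0 w_I w_0 w_0 = w_0 w_I = w$. So \emph{$w$ is $\star$-fixed}, and therefore $\iota_G(n_w) = n_w^{-1}$ for $w \in \mathcal{B}(G)$.

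Next I would handle the torus factor. Since $\iota_G = \iota_- \circ c_{G,\mathcal{P}}$ and both factors act on $T$ by $t \mapsto w_0 t^{-1} w_0$ composed with inversion coming from $\iota_-$: concretely $c_{G,\mathcal{P}}(t) = w_0 t^{-1} w_0 = {}^{w_0}(t^{-1})$ and $\iota_-$ acts on $T$ trivially modulo... no — $\iota_- = \mathrm{Inn}(t_0)$ is inner and acts trivially on $T$. Hence $\iota_G(t) = {}^{w_0}(t^{-1})$ for $t \in T$. Thus
\begin{align*}
\iota_G(t\, n_w) = \iota_G(t)\,\iota_G(n_w) = {}^{w_0}(t^{-1})\, n_w^{-1}.
\end{align*}
On the other hand $(t\,n_w)^{-1} = n_w^{-1} t^{-1} = \big(n_w^{-1} t^{-1} n_w\big)\, n_w^{-1} = {}^{w^{-1}}(t^{-1})\, n_w^{-1}$. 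So the identity to be proven becomes ${}^{w_0}(t^{-1}) = {}^{w^{-1}}(t^{-1})$, i.e. ${}^{w_0 w}(t^{-1}) = t^{-1}$, equivalently $w_0 w = w_I$ fixes $t^{-1}$, equivalently $w_I$ fixes $t$. Now this is exactly where the condition $t \in A_w$ enters: by definition $w\alpha(t) = 1$ for all $\alpha \in \widetilde{I_w}$, and $\widetilde{I_w}$ spans (over $\mathbb{Q}$) the root subsystem whose Weyl group is $W_I$; hence $t$ lies in the center of the Levi $L_{\widetilde{I_w}}$ after conjugation — precisely Proposition~\ref{proposition::Description of w}(2) identifies $A_w$ with the $F$-points of the center of $L_{w\widetilde{I_w}} = n_w L_{\widetilde{I_w}} n_w^{-1}$. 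So $n_w^{-1} t n_w$ is central in $L_{\widetilde{I_w}}$, in particular fixed by $W_I$ (which acts trivially on the center of its own Levi), and applying $w_I$ and conjugating back shows $w_I$ fixes $t$, as needed.

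I expect the main obstacle to be bookkeeping with the two incarnations of the torus-conjugation — making sure that "$\iota_G$ acts on $T$ by ${}^{w_0}(\,\cdot\,)^{-1}$" is stated and used consistently (the excerpt records $c_{G,\mathcal{P}}(t) = w_0 t^{-1} w_0$ and that $\iota_-$ is inner, so this is available but should be invoked carefully), together with the identification of $A_w$ as the $F$-points of $Z(L_{w\widetilde{I_w}})$ and the elementary fact that the Weyl group of a Levi fixes the center of that Levi pointwise. The Weyl-element computation $w^{\star} = w$ for $w \in \mathcal{B}(G)$ is short once Proposition~\ref{proposition::Description of w}(1) is in hand, so the only real content is the transfer of the $A_w$-condition through the conjugation by $n_w$, which Proposition~\ref{proposition::Description of w}(2) is designed to supply.
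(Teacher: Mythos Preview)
Your approach is exactly the paper's: first show $w^{\star}=w$ for $w\in\mathcal{B}(G)$ via Proposition~\ref{proposition::Description of w}(1) to get $\iota_G(n_w)=n_w^{-1}$, then use Proposition~\ref{proposition::Description of w}(2) to control the torus factor. The paper packages the torus step a bit more directly: since $n_w^{-1}t\,n_w$ is central in $L_{\widetilde{I_w}}$ it commutes with $n_{w_I}$, so (using $n_{w_0}=n_w n_{w_I}$) one has $n_w^{-1}t\,n_w=n_{w_0}^{-1}t\,n_{w_0}=\iota_G(t^{-1})$, and then $\iota_G(t\,n_w)=\iota_G(t)\,n_w^{-1}=(n_w^{-1}t^{-1}n_w)\,n_w^{-1}=(t\,n_w)^{-1}$ immediately.

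Your bookkeeping contains two slips that happen to cancel. From ${}^{w_0}(t^{-1})={}^{w^{-1}}(t^{-1})$ one deduces ${}^{\,w w_0}(t^{-1})=t^{-1}$, not ${}^{\,w_0 w}(t^{-1})=t^{-1}$; and $w w_0=w_0 w_I w_0$, not $w_I$. Correspondingly, the $A_w$ condition does \emph{not} say that $W_I$ fixes $t$: it says $t$ is central in $L_{w\widetilde{I_w}}$ (equivalently $n_w^{-1}t\,n_w$ is central in $L_{\widetilde{I_w}}$), so conjugating back gives that $wW_Iw^{-1}=W_{w(I)}$ fixes $t$, whose longest element is precisely $w_0 w_I w_0=w w_0$. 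Thus the condition you actually need and the condition $A_w$ actually supplies agree, but neither is ``$w_I$ fixes $t$''. You were right to flag the torus bookkeeping as the delicate point; once it is straightened out your argument is the paper's.
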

  \begin{proof}
  	It is equivalent to show that
  	\begin{align}
  	\iota_G ( n_w )^{-1} \iota_G ( t )^{-1} \iota_G( n_w )  \iota_G ( n_w )^{-1} = t \shskip n_w.
  	\end{align}
  	
  	We first show that, if $w$ is Bessel relevant, $\iota_G( n_w ) =  n_w^{-1}$. By Proposition \ref{proposition::Description of w}, we have $w = w_0 w_J$ for some subset $J$ of simple $F$-roots. Thus $w^{\star}  =  w_0 (w_0 w_J)^{-1} w_0  = w$. By Theorem \ref{theorem::Involution}, we then get $\iota_G( n_w ) = n_w^{-1}$.
  	
  	Next we show, for $t \in A_w$, that $\iota_G(t^{-1}) = n_w^{-1} t \shskip  n_w$. Write simply $I = I_w$. As $w_0 = w w_I$ and $l (w_0)  =  l(w) + l(w_I)$, we have $ n_{w_0}  =  n_w  n_{w_I}$. By Proposition \ref{proposition::Description of w}, $ n_w^{-1} t \shskip n_w$ is in the center of $L_{\widetilde{I_w}}$, and thus commutes with $ n_{w_{I}}$. Therefore $ n_w^{-1} t \shskip  n_w = n_{w_I}^{-1} n_w^{-1} t \shskip n_w  n_{w_I} =  n_{w_0}^{-1} t \shskip n_{w_0} = n_{w_0} t\shskip   n_{w_0}^{-1} =\iota_G(t^{-1})$.
  	
  	\end{proof}
 
    \begin{cor}\label{cor::Involution-Bessel}
    	For any function $f$ on $G(F)$ satisfying \eqref{formula::Bessel  Transformation Rule}, if $f(n) \neq 0$ with $n \in N_G(T)(F)$, then $\iota_G (n) = n^{-1}$.
    \end{cor}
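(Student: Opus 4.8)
The plan is to read off the claim directly from the three preceding propositions. Suppose $f$ satisfies \eqref{formula::Bessel  Transformation Rule} and let $n \in N_G(T)(F) = N_G(S)(F)$ be such that $f(n) \neq 0$. By Proposition \ref{proposition::Bessel w--support}, $f$ is supported on the Bruhat cells $C(w)$ with $w$ Bessel relevant, so $n \in C(w)$ for some $w \in \mathcal{B}(G)$.

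The one point that needs a short argument is that $n$ factors through $n_w$ up to the torus. Writing $n = 1 \cdot n_{\bar n} \cdot (n_{\bar n}^{-1} n)$, where $\bar n$ denotes the image of $n$ in $W$ and $n_{\bar n}^{-1} n \in T(F) \subset B(F)$, we see that $n \in C(\bar n)$; since the cells $C(w)$ are pairwise disjoint by the $F$-rational Bruhat decomposition recalled in Section \ref{section::Preli}, this forces $\bar n = w$. Hence $n n_w^{-1}$ lies in $\ker(N_G(T) \to W) = T$ and is $F$-rational, so $n = t_0 n_w$ with $t_0 \in T(F)$ and $w \in \mathcal{B}(G)$.

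Now I would invoke Proposition \ref{proposition:: Bessel Aw}: since $f(t_0 n_w) = f(n) \neq 0$, its proof (which shows $f(t n_w) = 0$ whenever $t \in T(F)$ fails to lie in $A_w$) forces $t_0 \in A_w$. Finally, Proposition \ref{proposition:: Bessel-  Involution}, applied to $w \in \mathcal{B}(G)$ and $t_0 \in A_w$, yields
\begin{align*}
\iota_G(n) = \iota_G(t_0 n_w) = (t_0 n_w)^{-1} = n^{-1},
\end{align*}
which is the assertion.

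In terms of difficulty there is no real obstacle: all the substance has already been absorbed into Propositions \ref{proposition::Bessel w--support}, \ref{proposition:: Bessel Aw} and \ref{proposition:: Bessel-  Involution}. The only thing to be careful about is identifying the Weyl-group image of $n$ and peeling off the diagonal part $t_0 \in T(F)$ via uniqueness in the Bruhat decomposition; everything else is a direct substitution.
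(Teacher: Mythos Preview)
Your proposal is correct and follows exactly the route the paper intends: the corollary is stated without proof because it is an immediate consequence of Propositions \ref{proposition::Bessel w--support}, \ref{proposition:: Bessel Aw} and \ref{proposition:: Bessel-  Involution}, and your write-up spells this out precisely, including the small Bruhat-decomposition step needed to write $n = t_0 n_w$ with $t_0 \in T(F)$ and $w \in \mathcal{B}(G)$.
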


\begin{rem}\label{rem::Involution=Shalika}
	We now verify that the composition of the duality involution with the inversion, $g \mapsto \iota_{G,\mathcal{P}}(g^{-1})$, satisfies the properties $(1.1)$-$(1.5)$ listed on page 174 of Shalika's multiplicity one paper \cite{Shalika-MO}. Properties $(1.1)$-$(1.3)$ are immediate from the definition. By \eqref{formula::algebraic Whittaker} and \eqref{formula:: Involution on Weyl - One Element}, we have $\psi (\iota_{G,\mathcal{P}} (u))  =  \psi (u)^{-1}$, hence property $(1.4)$ holds. As for $(1.5)$, fix $n \in N_G(T)(F)$. If $\psi (u) = \psi (n u n^{-1})$, for all $u \in U \cap n^{-1} U n$, then we can define a function $f$ on $G(F)$ that is supported on $U(F)nU(F)$, thus $\iota_{G,\mathcal{P}} (n^{-1}) = n$ by Corollary \ref{cor::Involution-Bessel}. 
\end{rem}

\section{The opposition group and Shintani base change}\label{section::opposition Shintani}

Let $E / F$ be a quadratic extension of finite fields. Let $G$ be a connected reductive group over $F$, hence quasi-split, with a fixed $F$-pinning $\mathcal{P}$. To ease the notations, we will omit the pinning $\mathcal{P}$ if this causes no confusion. 

Recall that the opposition group $G^{op}$ is defined to be the quasi-split group obtained by twisting $G$ by the cocycle
\begin{align*}
  z \colon \Gal(\bar{F} / F)    \ra \text{Aut}(G)
\end{align*}
which sends the non-trivial element in $\Gal(E/F)$ to the duality involution $\iota_{G}$ (see \cite{Prasad-Relative+LLC}). Thus $G^{op}$ is a $E$-form of $G$. 

 Let $\sigma_0$ denote the Frobenius endomorphism associated to the $F$-rational structure of $G$.  So $G(F) = G^{\sigma_0} $. Let $ \sigma = \sigma_0 \circ \iota_G$. Then $\sigma$ is the Frobenius map associated to the $F$-rational structure of $G^{op}$. We have
  
  	\begin{align*}
  	G^{op}(F) & = G^{\sigma} = \{ g \in G(E) \ |\ \sigma (g) = g  \}, \\
  	G^{op}(E) & = G^{\sigma^2} = G(E).
  	\end{align*}
  	

\subsection{Twisted action}

Given $g$, $x \in G^{\sigma^2}$, the twisted action associated to $\sigma$ is given by $g \cdot x  =  g x \sigma(g)^{-1}$. The orbits of the twisted action are called $\sigma$-conjugacy classes of $G^{\sigma^2}$. Let 
\begin{align}\label{formula::X-sigma}
X_{\sigma} = \{ g \in G^{\sigma^2} \ |\  \sigma (g) = g^{-1}   \}.
\end{align} 
Note that $g \mapsto g \cdot 1$ gives an injection from $G^{\sigma^2} / G^{\sigma}$ to $X_{\sigma}$. By Lang-Steinberg's theorem, every element $g$ of $G$ can be written as $h \sigma(h)^{-1}$; the choice of $h$ is unique up to right translation by $G^{\sigma}$. One sees that $g \in X_{\sigma}$ if and only if $ h \in G^{\sigma^2}$. Thus $X_{\sigma}$ is exactly the $\sigma$-conjugacy class of $1$ in $G^{\sigma^2}$, and we have a bijection map $G^{\sigma^2}  /  G^{\sigma}  \ra X_{\sigma}$.

By the definition of $\iota_{G}$, the groups $B$, $T$ and $U$ are stable under the action of $\sigma$. The following proposition, which is a special case of \cite[Proposition 6.6]{Helminck-Wang-Involution}, gives a complete set of representatives for the twisted action of $U^{\sigma^2} = U(E)$ on $X_{\sigma}$. Since the formulation is slightly different, we are prompted to include the elegant proof in \emph{loc.cit.} here. 

    \begin{prop}\label{lemma::Norm one elements}
    	If $g \in X_{\sigma}$, then $g = u  n  \sigma(u)^{-1}$ for some $u \in U(E)$ and $n \in N_G(T)(E)$ with $\sigma (n)  =  n^{-1}$. 
    \end{prop}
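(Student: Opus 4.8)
The plan is to exploit the $F$-rational Bruhat decomposition of $G(E)$ together with the characterizing property $\sigma(g)=g^{-1}$ of elements of $X_\sigma$. First I would write $g = b\, n_w\, v$ with $b\in B(E)$, $w\in W$ (the relative Weyl group of $G$ over $E$, which here is the absolute Weyl group since $G(E)$ is split over $E$ up to the $E$-structure), $n_w$ the canonical representative, and $v\in U_w^-(E)$; so $g$ lies in the Bruhat cell $C(w)$. Since $\sigma$ stabilizes $B$, $T$, $U$ and preserves the canonical representatives $n_w$ up to the $\star$-operation via Theorem~\ref{theorem::Involution} (because $\sigma=\sigma_0\circ\iota_G$ and $\iota_G(n_w)=(n_{w^\star})^{-1}$), the condition $\sigma(g)=g^{-1}$ forces $g^{-1}$ to lie in the cell $C(w^\star)$ where $w^\star = w_0 w^{-1} w_0$. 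But $g\in C(w)$ means $g^{-1}\in C(w^{-1})$; comparing, uniqueness of Bruhat cells gives $w^{-1} = w^\star = w_0 w^{-1} w_0$, i.e. $w_0 w^{-1}$ is an involution — this pins down which cells can support elements of $X_\sigma$ and, more importantly, forces the "torus part" to be $\sigma$-twisted-trivial.

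Next I would extract the refined coset structure. Writing $b = u_1 t$ with $u_1\in U(E)$, $t\in T(E)$, I want to absorb $u_1$ and $v$ into a $U(E)$-twisted-conjugation $g\mapsto u\cdot g = u g \sigma(u)^{-1}$ and reduce to $g = t\, n_w$ with $\sigma(t n_w) = (t n_w)^{-1}$. The key computational input is that, after the twisted action of a suitable $u\in U(E)$, one can kill the unipotent parts; this is the analogue, in the twisted setting, of the standard fact that $B(E)\backslash G(E)/?$... more precisely I would mimic the descent used in Proposition~\ref{proposition:: Bessel Aw}: the unipotent radical $U$ splits as $U_w^+ U_w^-$ relative to $w$, the part $U_w^+$ conjugates under $n_w$ back into $U$, and $\sigma$ being compatible with the pinning (formula~\eqref{formula:: compatible representative character} and Lemma~\ref{lemma::representative of Weyl}) lets one solve the relevant "Lang-type" equations inside $U(E)$ to clear these factors. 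Having reduced to $g\cdot 1$-form $g = t n_w$ with $\sigma(n)=n^{-1}$ for $n = t n_w\in N_G(T)(E)$ (using $N_G(T)(E)=N_G(S)(E)$ from the $F$-rational Bruhat decomposition), the statement follows with this $n$ and the $u$ we conjugated by.

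The main obstacle I expect is the middle step: showing that the unipotent contamination can be removed by a single $U(E)$-twisted conjugation, i.e. that for $g\in X_\sigma$ the $U(E)$-twisted orbit of $g$ meets $N_G(T)(E)$. Concretely, if $g = u_1 t n_w v$ with $v\in U_w^-(E)$, I need to find $u\in U(E)$ with $u g \sigma(u)^{-1} \in T(E) n_w$. Writing $\sigma(u)^{-1}$ and using $n_w v n_w^{-1}\in U^-(E)$, this becomes a system that decouples along root subgroups; the constraint $\sigma(g)=g^{-1}$ is exactly what makes it solvable (it forces a matching between the "$U$-on-the-left" and "$U$-on-the-right" data). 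I would handle this by a length induction on $w$ or by invoking the cited \cite[Proposition 6.6]{Helminck-Wang-Involution} structure directly; the cleanest route is probably to quote Helminck–Wang for the orbit decomposition and then merely check that our $\sigma$, built from the pinning-compatible duality involution, satisfies their hypotheses and that their representatives can be taken with $\sigma(n)=n^{-1}$ — which is immediate from Theorem~\ref{theorem::Involution} once $w^\star=w$ on the relevant cells.
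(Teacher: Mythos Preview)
Your plan is in the right direction but has a real gap at the crucial step, and it takes an unnecessary detour.

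\textbf{The detour.} You invoke Theorem~\ref{theorem::Involution} and the explicit action of $\iota_G$ on canonical representatives to constrain the Weyl cell. This is not needed: the paper's argument (which is Helminck--Wang's) works for \emph{any} Frobenius $\sigma$ stabilizing $(B,T)$, using only Bruhat uniqueness and Lang--Steinberg. Incidentally, your cell computation slips: $\iota_G$ sends $C(w)$ to $C(w_0 w w_0)=C((w^\star)^{-1})$, so $\sigma(g)=g^{-1}$ yields $w^\star=w$, not $w^{-1}=w^\star$. But since the detour is superfluous, this doesn't matter.

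\textbf{The gap.} The heart of the proposition is showing that the $U(E)$-twisted orbit of $g$ meets $N_G(T)(E)$, and you do not actually do this; you defer to ``length induction'' or to citing Helminck--Wang. Citing is fine, but your follow-up is confused: once you have $g=u\,n\,\sigma(u)^{-1}$ with $n\in N_G(T)(E)$, the condition $\sigma(n)=n^{-1}$ is \emph{automatic} from $\sigma(g)=g^{-1}$ and $\sigma^2=\mathrm{id}$ on $G(E)$; it does not come from, and does not require, Theorem~\ref{theorem::Involution}.

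\textbf{What the paper does that you miss.} Two clean tricks replace your vague ``solve along root subgroups'':
\begin{itemize}
\item[(i)] Parametrize from the start as $g=u_1\,n\,\sigma(u_2)$ with $n^{-1}u_1 n\in U^-(E)$. Then $\sigma(g)=g^{-1}$ immediately gives $\sigma(n)=n^{-1}$ and, after splitting $u_2^{-1}=v_1v_2$ along $U\cap nU^-n^{-1}$ and $U\cap nUn^{-1}$, forces $u_1=v_1$ and $\sigma(v_2)=n^{-1}v_2^{-1}n$, so $v_1^{-1}g\,\sigma(v_1)=v_2 n$.
\item[(ii)] Now $\sigma':=\mathrm{Ad}(n)\circ\sigma$ is a Frobenius on the connected group $U\cap nUn^{-1}$ (this is where $\sigma(n)=n^{-1}$ is used), so Lang--Steinberg produces $y$ with $y\,n\,\sigma(y)^{-1}n^{-1}=v_2$; then $u=v_1y$ does the job.
\end{itemize}
No special feature of $\iota_G$ enters anywhere.
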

     \begin{proof}
     	 By the $E$-rational Bruhat decomposition, we write $g = u_1 n \sigma(u_2)$, with $u_1$, $u_2 \in U(E)$ and $n \in N_G(T)(E)$. We may assume that $n^{-1}u_1 n \in U^-(E)$. By the assumption we have 
     	\begin{align}\label{formula::prop 5.5-Fg=g-1}
     	\sigma(u_1) \sigma(n) u_2  =  \sigma(u_2)^{-1} n^{-1} u_1^{-1}.
     	\end{align} 
     	This implies that $\sigma(n)  =  n^{-1}$. Write $u_2^{-1}  =  v_1v_2$ such that
     	\begin{align*}
     	v_1 \in U(E) \cap n U^-(E) n^{-1} \quad \text{and}\quad v_2 \in U(E) \cap n U(E) n^{-1}.
     	\end{align*}
     	Rewrite the equality \eqref{formula::prop 5.5-Fg=g-1} as
     	\begin{align*}
     	(\sigma(u_1) n^{-1} v_2^{-1} n ) n^{-1} v_1^{-1} n = ( \sigma(v_1) \sigma(v_2)) n^{-1} u_1^{-1} n.
       	\end{align*}
       	By the disjointness of $U$ and $U^{-}$, we have $u_1 = v_1$ and $\sigma(v_2)  =  n^{-1} v_2^{-1} n$. Hence we have 
       	\begin{align}
       	v_1^{-1} g \sigma(v_1) = v_2 n.
       	\end{align}
       	
     	Now consider the Frobenius endomorphism $\sigma^{\prime} = \mathrm{Ad}(n)\circ \sigma$. The fact that $ \sigma(n) = n^{-1} $ implies that $\sigma^{\prime}$ takes $ U \cap n U n^{-1}$ into itself. So, by Lang-Steinberg's theorem, we can choose $y \in U \cap nUn^{-1}$ such that $y n \sigma(y^{-1})n^{-1} = v_2$. As $v_2n\in X_{\sigma}$, one checks that $y \in U(E)$. Therefore the element $u = v_1 y \in U(E)$ has the desired property.
      	\end{proof}

The next lemma will be used in Proposition \ref{proposition::Counting}. Its proof is a simple application of Lang-Steinberg's theorem, so we omit it here (see, for example, \cite[\S 1.17]{Carter-1985}).
\begin{lem}\label{lemma::Quotient}
	Let $H$ be a $\sigma$-stable closed subgroup of $G$. Assume that $H$ is connected and is normal in $G$. Then we have a bijection
	\begin{align}\label{formula::Quotient}
	G^{\sigma} / H^{\sigma}  \cong \left( G^{\sigma^2}  /  H^{\sigma^2} \right)^{\sigma}.
	\end{align}
\end{lem}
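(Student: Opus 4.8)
The plan is to realize the asserted bijection as the obvious map $gH^{\sigma}\mapsto gH^{\sigma^2}$. This makes sense because $G^{\sigma}\subseteq G^{\sigma^2}$ and $H^{\sigma}\subseteq H^{\sigma^2}$, so the assignment descends to cosets, and because for $g\in G^{\sigma}$ the class $gH^{\sigma^2}$ is visibly fixed by the twisted action of $\sigma$ on $G^{\sigma^2}/H^{\sigma^2}$ (indeed $\sigma(g)=g$). It then remains to check surjectivity and injectivity, and the only nontrivial input will be Lang--Steinberg's theorem.

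For surjectivity, note that since $H$ is connected and $\sigma$-stable, $\sigma|_{H}$ is again a Frobenius endomorphism of the connected group $H$, so by Lang--Steinberg the map $h\mapsto h^{-1}\sigma(h)$ is surjective on $H$. Take a class $gH^{\sigma^2}$ that is $\sigma$-fixed, i.e.\ $g^{-1}\sigma(g)\in H^{\sigma^2}\subseteq H$. Choose $h_0\in H$ with $h_0^{-1}\sigma(h_0)=g^{-1}\sigma(g)$ and put $g'=gh_0^{-1}$. A direct computation gives $(g')^{-1}\sigma(g')=h_0\bigl(g^{-1}\sigma(g)\bigr)\sigma(h_0)^{-1}=h_0\bigl(h_0^{-1}\sigma(h_0)\bigr)\sigma(h_0)^{-1}=1$, so $g'\in G^{\sigma}$; and since $g,g'\in G^{\sigma^2}$ while $g^{-1}g'=h_0^{-1}\in H$, we get $h_0^{-1}\in H\cap G^{\sigma^2}=H^{\sigma^2}$, whence $g'H^{\sigma^2}=gH^{\sigma^2}$. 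Thus $g'H^{\sigma}$ maps to $gH^{\sigma^2}$.

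For injectivity, suppose $g_1,g_2\in G^{\sigma}$ with $g_1H^{\sigma^2}=g_2H^{\sigma^2}$. Then $g_1^{-1}g_2\in H^{\sigma^2}$, and also $g_1^{-1}g_2\in G^{\sigma}$ since it is a product of $\sigma$-fixed elements; hence $g_1^{-1}g_2\in H\cap G^{\sigma}=H^{\sigma}$, so $g_1H^{\sigma}=g_2H^{\sigma}$.

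The argument is essentially routine; the single substantive step is the surjectivity of $h\mapsto h^{-1}\sigma(h)$ on $H$, which is precisely Lang--Steinberg and uses the connectedness of $H$ in the hypothesis, together with the elementary bookkeeping $H\cap G^{\sigma^2}=H^{\sigma^2}$ and $H\cap G^{\sigma}=H^{\sigma}$. (Alternatively, one can package the same content by observing that the quotient map $G\to G/H$ induces $\sigma$-equivariant bijections $G^{\sigma}/H^{\sigma}\cong (G/H)^{\sigma}$ and $G^{\sigma^2}/H^{\sigma^2}\cong (G/H)^{\sigma^2}$ via Lang--Steinberg, and then invoking the trivial identity $((G/H)^{\sigma^2})^{\sigma}=(G/H)^{\sigma}$; this variant uses that $G/H$ is a group, i.e.\ the normality of $H$.)
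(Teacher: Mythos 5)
Your proof is correct and is exactly the ``simple application of Lang--Steinberg'' that the paper alludes to (and that Carter's \S 1.17 contains): well-definedness and injectivity are elementary bookkeeping, and surjectivity follows from solving $h_0^{-1}\sigma(h_0)=g^{-1}\sigma(g)$ in the connected group $H$. One small remark: the $\sigma$ acting on $G^{\sigma^2}/H^{\sigma^2}$ here is the ordinary descent of the Frobenius to cosets, not the ``twisted action'' $g\cdot x=gx\sigma(g)^{-1}$ from \S 5.1, so that phrase is a slight misnomer, though your computation $\sigma(g)=g$ shows you are using the right action; and as your argument makes clear, normality of $H$ is not actually needed for the main proof (only for the alternative packaging via the quotient group $G/H$).
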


\subsection{Shintani base change}\label{section::Shintani}

We now recall the notion of Shintani base change in the representation theory of finite reductive groups. 

  Recall that $g_1$, $g_2 \in G^{\sigma^2}$ are $\sigma$-conjugate if $g_1 = x g_2 \sigma(x)^{-1}$ for some $x \in G^{\sigma^2}$. We denote by $C_{G^{\sigma^2},\shskip \sigma}(g)$ the $\sigma$-conjugacy class of $g$ in $G^{\sigma^2}$. By 
  Lang-Steinberg's theorem, every element $g$ of $G$ can be written as $h \sigma(h)^{-1}$; the choice of $h$ is unique up to right translation by $G^{\sigma}$. It can be checked easily that $g \in G^{\sigma^2}$ if and only if 
  \begin{align*}
  \sigma^{2}(h)^{-1} h = \sigma(h)^{-1} (\sigma(g) g) \sigma(h) \in G^{\sigma}.
  \end{align*}
  Thus we have a well-defined map 
  \begin{align}\label{formula::Norm map N}
  \EuScript N \colon  \{\sigma\text{-conjugacy classes of } G^{\sigma^2}  \} & \ra  \{\text{conjugacy classes of }G^{\sigma}  \}    \\
   C_{G^{\sigma^2}, \sigma} (g)  & \mapsto   C_{G^{\sigma}} (\sigma^2(h)^{-1}h). \nonumber
  \end{align}
  The map $\EuScript N$ is called the Shintani norm map.
  
\begin{lem}
	The Shintani norm map $\EuScript N$ is bijective.
\end{lem}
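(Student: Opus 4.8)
The plan is to exhibit an inverse map directly at the level of $\sigma$-conjugacy and ordinary conjugacy classes, using Lang--Steinberg to pass freely between $G^{\sigma^2}$, $G^{\sigma}$ and the ambient group $G$. First I would recall the setup: given a $\sigma$-conjugacy class $C_{G^{\sigma^2},\sigma}(g)$, write $g = h\sigma(h)^{-1}$ with $h \in G$ (possible by Lang--Steinberg applied to the Frobenius $\sigma$), unique up to right multiplication by $G^{\sigma}$; the element $n(g) := \sigma^2(h)^{-1} h$ lies in $G^{\sigma}$ precisely when $g \in G^{\sigma^2}$, and $\EuScript N(C_{G^{\sigma^2},\sigma}(g)) = C_{G^{\sigma}}(n(g))$. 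The first routine check is well-definedness: replacing $h$ by $h k$ with $k \in G^{\sigma}$ multiplies $n(g)$ by $k$-conjugation (since $\sigma^2(hk)^{-1}(hk) = k^{-1}(\sigma^2(h)^{-1}h)k$ as $k$ is $\sigma$-fixed), and replacing $g$ by a $\sigma$-conjugate $x g \sigma(x)^{-1}$ with $x \in G^{\sigma^2}$ replaces $h$ by $xh$, which again only conjugates $n(g)$ within $G^{\sigma}$.

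Next I would construct the inverse. Given an ordinary conjugacy class $C_{G^{\sigma}}(y)$ with $y \in G^{\sigma}$, I apply Lang--Steinberg once more — this time to the Frobenius $\sigma^2$ of the ``doubled'' structure — to write $y = \sigma^2(h)^{-1} h$ for some $h \in G$ (note $\sigma^2$ is a Frobenius endomorphism of $G$ since $\sigma = \sigma_0 \circ \iota_G$ and $\sigma^2 = \sigma_0^2$ up to the involution, cf. the identity $G^{\sigma^2} = G(E)$; in any case $\sigma^2$ has finitely many fixed points so Lang--Steinberg applies). Then set $g := h\sigma(h)^{-1}$. One checks that $y \in G^{\sigma}$ forces $\sigma(g) = g$, i.e. $g \in G^{\sigma^2}$ — this is the same computation as the ``if and only if'' already displayed in the excerpt, read in reverse. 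Assigning $C_{G^{\sigma^2},\sigma}(g)$ to $C_{G^{\sigma}}(y)$ gives a candidate inverse, and the same uniqueness-up-to-$G^{\sigma^2}$-translation argument for $h$ shows this is well-defined on conjugacy classes. The two composites are then visibly the identity, since in both directions the auxiliary element $h$ may be taken to be literally the same.

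The main obstacle is not any single hard step but rather the bookkeeping of \emph{which} Lang--Steinberg theorem is being invoked and on which group and Frobenius at each stage: for surjectivity one needs that every $y \in G^{\sigma}$ arises as $\sigma^2(h)^{-1}h$, which requires Lang--Steinberg for $\sigma^2$ acting on $G$, whereas for the original definition of $\EuScript N$ one used Lang--Steinberg for $\sigma$; and for injectivity one must carefully track that two preimages $g_1, g_2$ with $n(g_1)$ conjugate to $n(g_2)$ in $G^{\sigma}$ yield $\sigma$-conjugate $g_i$ — here one absorbs the conjugating element of $G^{\sigma}$ into the ambiguity of $h_i$ and then the ambiguity of $h_i$ into a $\sigma$-conjugation of $g_i$ by an element of $G^{\sigma^2}$. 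Once the domain and codomain of each Lang--Steinberg application are pinned down, everything reduces to the elementary manipulations $\sigma^2(h)^{-1}h$ versus $h\sigma(h)^{-1}$ already appearing in the text, so I would present the inverse explicitly and leave these verifications as short computations.
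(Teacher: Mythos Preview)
The paper does not prove this lemma itself; it simply cites \cite[Lemma 2.2]{Kawanaka-Unitary+ShintaniLift}. Your direct Lang--Steinberg argument is correct and is essentially what Kawanaka does there.

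One point worth making precise, since you flag the injectivity bookkeeping as the main obstacle: the fibers of the Lang map $h \mapsto \sigma^2(h)^{-1}h$ are \emph{left} $G^{\sigma^2}$-cosets (in contrast to the right $G^{\sigma}$-cosets for $h \mapsto h\sigma(h)^{-1}$), and left-multiplying $h$ by $m \in G^{\sigma^2}$ sends $g = h\sigma(h)^{-1}$ to $m g \sigma(m)^{-1}$, which is exactly a $\sigma$-conjugate. So after absorbing the $G^{\sigma}$-conjugation of $y$ into a right translation of $h_1$ by an element of $G^{\sigma}$ (which leaves $g_1 = h_1\sigma(h_1)^{-1}$ unchanged since $\sigma$ fixes that element), the remaining ambiguity is $h_2 = m h_1$ with $m \in G^{\sigma^2}$, and then $g_2 = m g_1 \sigma(m)^{-1}$ drops out immediately. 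Keeping the left/right sides straight is the only real content of the verification. (Minor typo: in your surjectivity paragraph, ``$y \in G^{\sigma}$ forces $\sigma(g) = g$'' should read $\sigma^2(g) = g$, as your next clause indicates.)
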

\begin{proof}
	This is proved in \cite[Lemma 2.2]{Kawanaka-Unitary+ShintaniLift}
	\end{proof}

Let $A = \langle 1, \sigma^{\prime} \rangle$ be the group of order $2$. Suppose that $A$ acts on $G(E)$ by $\sigma^{\prime}(g)  =  \sigma(g)$. We shall write $\sigma$ for $\sigma^{\prime}$. Denote by $\tilde{G}(E)$ the semi-direct product $G(E) \rtimes A$. The component $G(E) \rtimes \sigma$ of $\tilde{G}(E)$ is stable under the conjugation action of $\tilde{G}(E)$, and the conjugacy classes in $G(E) \rtimes \sigma$ are in bijection with $\sigma$-conjugacy classes of $G(E)$. 

We say that a representation $\pi$ of $G(E)$ is $\sigma$-invariant if it is equivalent to the representation $\pi^{\sigma}$ defined by
 \begin{align*}
   \pi^{\sigma} (g)  =  \pi (\sigma (g)), \quad  g \in G(E).
 \end{align*}
It is shown by Kawanaka \cite[Corollary 2.3]{Kawanaka-Unitary+ShintaniLift} that the number of irreducible, $\sigma$-invariant representations of $G(E)$ is equal to the number of irreducible representations of $G^{op}(F)$. For an irreducible, $\sigma$-invariant representation $\pi$ of $G(E)$, by Schur's lemma, we can extend $\pi$ to a representation $\tilde{\pi}$ of $\tilde{G}(E)$. This amounts to a choice of intertwining operator $I_{\sigma}$ between $\pi$ and $\pi^{\sigma}$ such that $I_{\sigma}^2  =  1$. Clearly there are two choices of such $I_{\sigma}$.

\begin{defn}\label{defn::Shantani bc}
	Let $\pi$, $\rho$ be irreducible representations of $G(E)$, $G^{op}(F)$ respectively. Assume that $\pi$ is $\sigma$-invariant. We say that $\pi$ is a (Shintani) base change lift of $\rho$ if, for an extension $\tilde{\pi}$ of $\pi$ to $\tilde{G}(E)$, 
	\begin{align}\label{formula::Shintani bc-defn}
	  \chiup_{\tilde{\pi}} ( g \rtimes \sigma)  =  \varepsilon (\tilde{\pi}) \chiup_{\rho} ( \EuScript N (g)), \quad g \in G(E),
	\end{align}
	where $\chiup$ is the trace character of a representation, $\varepsilon (\tilde{\pi})  =  \pm 1$, and $\EuScript N(g)$, by abuse of notation, is the image of the $\sigma$-conjugacy class of $g$ under the Shintani norm map $\EuScript N$ \eqref{formula::Norm map N}.
\end{defn}

 \begin{rem}\label{rem::Shintani bc}
 	 For a generic representation $\pi$, we can make a canonical choice of the intertwining operator $I_{\sigma}$, following the treatment in \cite{Arthur-Clozel}. This will rely on Whittaker models. Assume that $\pi$ is $\sigma$-invariant and $\psi$-generic, for a nondegenerate character $\psi$ of $U(E)$. This forces that $\psi = \psi^{\sigma}$, which we shall also assume. Let $\EuScript V  \subset \ind_{U(E)}^{G(E)} \psi$ be the unique subspace of Whittaker models of $\pi \cong \pi^{\sigma}$. For $f \in \EuScript V$, let $f^{\sigma}(g) = f( \sigma(g))$. Now we define $I_{\sigma} \colon \EuScript V \ra \EuScript V$ by $I_{\sigma} (f)  =  f^{\sigma}$. Also, we shall write $\varepsilon(\pi)$ for $\varepsilon(\tilde{\pi})$, with $\tilde{\pi}$ the extension corresponding to this normalized intertwining operator $I_{\sigma}$.
 	
 \end{rem}

  \begin{rem}
  The definition of the norm map \eqref{formula::Norm map N} here is due to Kawanaka \cite[\S 2]{Kawanaka-Unitary+ShintaniLift}. In \cite{Shintani}, Shintani considered the norm map $N$ defined by $N (g) = g \sigma (g) $. For $G = \GL_n$ and $\sigma = \sigma_0$, he proves that the conjugacy class of $N(g)$ in $G$ contains exactly one conjugacy class of $G^{\sigma}$. Thus he obtains the same norm map between conjugacy classes as $\EuScript N$ \eqref{formula::Norm map N}. A general result of Springer and Steinberg \cite[1, 3.4]{Springer-Steinberg-ConjugacyClass} says that if the centralizer $Z_G (x)$ is connected for all $x \in G$ , the conjugacy class of $N(g)$ in $G$ contains exactly one conjugacy class of $G^{\sigma}$. 
 \end{rem}

\section{Proof of Theorem \ref{theorem::Main}}\label{section::Main}

 Let $F$ be a finite field of $q$ elements and $E / F$ be a quadratic field extension. Let $G$ be a connected reductive group over $F$ with an $F$-pinning $\mathcal{P}$. Let $\psi_0$ be a non-trivial additive character of $ E / F$, and $\psi$ be the nondegenerate character of $U(E)$ that associated to $\psi_0$ and the pinning $\mathcal{P}$ via \eqref{formula::algebraic Whittaker} (here $G$ and $\mathcal{P}$ are viewed as defined over $E$). We have also the duality involution $\iota_G$ and the opposition group $G^{op}$ associated to $\mathcal{P}$. 

  Let $\pi$ be an irreducible, $\psi$-generic representation of $G(E)$. Consider the obvious $G(F)$-invariant linear form $\lambda$ on the Whittaker model of $\pi$:
  
  \begin{align}\label{formula::Distinction lin-form}
  \lambda (W) = \sum_{h \in G(F)} W (h),\quad W \in \EuScript W(\pi,\psi).
  \end{align}
  We will show that, if $\pi$ is a base change lift of some representation $\rho$ of $G^{op}(F)$, the $G(F)$-invariant functional $\lambda$ is not zero. The key idea of the method in \cite{Anan***-Matringe-BaseChange} is to compare $\lambda$ with another linear functional $\mu$ on the Whittaker model:
  
  \begin{align}\label{formula::Whittaker lin-form}
  \mu (W)  = \sum_{ g \in X_{\sigma} }  W ( g ), \quad W \in \EuScript W(\pi,\psi),
  \end{align}
  where $X_{\sigma}$ is defined in \eqref{formula::X-sigma}. 

First, one observes that

\begin{lem}\label{lemma:: generic charcter}
    The nondegenerate character $\psi$ is trivial on $U(F)$; $\psi (u) =  \psi^{\sigma} (u) = \psi (\sigma (u))$ for all $u \in U(E)$.
\end{lem}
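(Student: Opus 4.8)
The plan is to unwind the construction of $\psi$ from the pinning and the construction of $\iota_G$ (hence of $\sigma = \sigma_0 \circ \iota_G$) from the same pinning, and to observe that the two interact in a way that makes $\psi$ both $U(F)$-trivial and $\sigma$-fixed. Concretely, recall from Section \ref{subsection::Whittaker} that $\psi = \psi_{\mathcal P}$ is defined by $\psi(u) = \psi_0(f(u))$ with $f\colon U \to \mathbb G_a$ the $F$-homomorphism of \eqref{formula::algebraic Whittaker}; here $\psi_0$ is a nontrivial additive character of $E$ that is additionally assumed trivial on $F$ (``a non-trivial additive character of $E/F$''). Since $f$ is defined over $F$, it carries $U(F)$ into $\mathbb G_a(F) = F$, so $\psi(U(F)) = \psi_0(F) = 1$; this gives the first assertion immediately. (Equivalently, via the identification $U(F)/U_1(F) \cong \prod_{a\in\Delta} V_a(F)$ and \eqref{formula::psi-P}, the restriction of $\psi$ to each $V_a(F)\cong L_\alpha$ is $u\mapsto \psi_0(\Tr_{L_\alpha/F_{\mathrm{base}}} u)$ evaluated on the $F$-rational locus, which lands in $\psi_0(F)$.)

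For the second assertion I would compute $\psi^\sigma = \psi\circ\sigma$ on a generic element $u = \prod_{\alpha\in\widetilde\Phi^+} x_\alpha(u_\alpha)$ of $U(E)$. Since $\sigma = \sigma_0\circ\iota_G$ and both $\sigma_0$ and $\iota_G$ preserve $U$, I split the computation into the two factors. For $\iota_G = \iota_{G,\mathcal P}$, formula \eqref{formula::Involution on Weyl  Basic} gives $\iota_G(x_\alpha(u)) = x_{-w_0\alpha}(-u)$ for $\alpha\in\widetilde\Delta$, and this extends (compatibly with the Chevalley–Steinberg system) to all positive $\alpha$: $\iota_G$ permutes the simple root subgroups by $\alpha\mapsto -w_0\alpha$, sends $x_\alpha(u)$ to $\pm x_{-w_0\alpha}(u)$, and in particular acts on the ``simple coordinates'' $u_\alpha$, $\alpha\in\widetilde\Delta$, simply by negation together with the permutation $\alpha\mapsto -w_0\alpha$ of $\widetilde\Delta$. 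Hence $f(\iota_G(u)) = \sum_{\alpha\in\widetilde\Delta}(-u_{(-w_0)^{-1}\alpha}) = -\sum_{\alpha\in\widetilde\Delta} u_\alpha = -f(u)$, so $\psi(\iota_G(u)) = \psi_0(-f(u)) = \psi(u)^{-1}$. (This is exactly the identity $\psi(\iota_{G,\mathcal P}(u)) = \psi(u)^{-1}$ already noted in Remark \ref{rem::Involution=Shalika}.) For $\sigma_0$: since $\psi_0$ is a character of $E$ trivial on $F$, it satisfies $\psi_0(\sigma_0(x)) = \psi_0(x^q) = \psi_0(\bar x) = \psi_0(-x) = \psi_0(x)^{-1}$ for $x\in E$, because $x + \bar x = \Tr_{E/F}(x)\in F$ lies in $\ker\psi_0$; and $f$ being defined over $F$ commutes with $\sigma_0$, so $\psi(\sigma_0(u)) = \psi_0(\sigma_0(f(u))) = \psi_0(f(u))^{-1} = \psi(u)^{-1}$. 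Composing, $\psi(\sigma(u)) = \psi\bigl(\sigma_0(\iota_G(u))\bigr) = \psi(\iota_G(u))^{-1} = \bigl(\psi(u)^{-1}\bigr)^{-1} = \psi(u)$, which is the claimed $\sigma$-invariance.

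The only genuinely delicate point is the precise behaviour of $\iota_G$ on the non-simple root subgroups, i.e. making sure that writing $u = \prod_{\alpha\in\widetilde\Phi^+} x_\alpha(u_\alpha)$ and applying $\iota_G$ does not disturb the simple coordinates $\{u_\alpha : \alpha\in\widetilde\Delta\}$ in any way other than the sign and the permutation $\alpha\mapsto -w_0\alpha$; a priori reordering the product could mix coordinates via commutators supported on higher roots. This is handled exactly as in the definition \eqref{formula::algebraic Whittaker} of $f$: $f$ kills $U_1$ (all non-simple root subgroups and all commutators), so only the simple coordinates survive, and on those $\iota_G$ acts by \eqref{formula::Involution on Weyl  Basic}. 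In fact, once one knows $\psi(\iota_{G,\mathcal P}(u)) = \psi(u)^{-1}$ — which the paper has already recorded — and $\psi(\sigma_0(u)) = \psi(u)^{-1}$, the lemma is simply the composition of these two, together with the $F$-triviality that comes straight from $f$ being an $F$-morphism and $\psi_0|_F = 1$. So I expect no real obstacle; the proof is a short bookkeeping argument built on \eqref{formula::Involution on Weyl  Basic}, \eqref{formula::algebraic Whittaker}, and the hypothesis that $\psi_0$ is trivial on $F$.
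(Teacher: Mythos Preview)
Your proof is correct and follows essentially the same idea as the paper's: both reduce to the facts that $\psi_0|_F = 1$ and that, on the simple coordinates, $\iota_G$ acts by negation (plus the permutation $\alpha\mapsto -w_0\alpha$) while $\sigma_0$ acts by the $q$-th power. The only difference is presentational: the paper computes the combined action of $\sigma$ directly on each $V_a(E)\cong L_\alpha\otimes_F E$ and obtains $\Tr_E(\sigma(z)) = -\Tr_E(z)^q$, whereas you factor $\sigma=\sigma_0\circ\iota_G$ and use that the $F$-morphism $f\colon U\to\mathbb G_a$ commutes with $\sigma_0$ and satisfies $f\circ\iota_G=-f$, thereby sidestepping the explicit tensor-product description of $V_a(E)$. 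Your route is slightly slicker for exactly this reason, and it has the pleasant feature of making the identity $\psi(\iota_{G,\mathcal P}(u))=\psi(u)^{-1}$ from Remark~\ref{rem::Involution=Shalika} do visible work.
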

\begin{proof}
	The first statement follows from the fact that the additive character $\psi_0$ is trivial on $F$.  For the second statement, it suffices to verify that $\psi = \psi^{\sigma}$ on $V_a(E)$ for every $a \in \Delta$. Identifying $V_a(E)$ with $L_{\alpha} \otimes_{F} E$ via \eqref{formula::Va}, we have $\psi (z) = \psi_0 \left( \Tr_{E} (z) \right) $ for all $z \in L_{\alpha} \otimes_{F} E$, where $\Tr_{E} \colon L_{\alpha} \otimes_{F} E \ra E$ is defined by tensoring the trace map $\Tr \colon L_{\alpha} \ra F$ with $E$. Note that $\sigma$ takes $V_a(E)$ into $V_{-w_0 a}(E)$, which can also be identified with $L_{\alpha} \otimes_{F} E$ via \eqref{formula::Va}. In view of \eqref{formula::Involution on Weyl  Basic}, we can transport $\sigma$ to an action on $L_{\alpha} \otimes_F E$, still denoted by $\sigma$, and get that
	\begin{align}
	  \sigma \left( \sum u_i \otimes v_i \right)  =  - \sum (u_i \otimes v_i^q).
	\end{align}
	Thus $\Tr_{E} ( \sigma(z) )  =  - \Tr_{E}(z)^q$ for all $z \in L_{\alpha} \otimes_{F} E$. The statement then follows from the fact that $\psi_0$ is trivial on $F$.
	\end{proof}
\begin{rem}
	Conversely, one can check that any nondegenerate character of $U(E)$ that is trivial on $U(F)$ can be obtained in this way. In view of Lemma \ref{lemma:: generic charcter}, one can check easily that $\mu$ is actually a $\psi$-Whittaker linear form (see \cite[Remark 7]{Anan***-Matringe-BaseChange}).
\end{rem}

\begin{prop}\label{proposition:: lambda  =  mu}
	Let $B_{\pi,\psi}$ be the $\psi$-Bessel function associated to $\pi$. Then we have $\lambda (B_{\pi,\psi}) = \mu ( B_{\pi,\psi} )$.
\end{prop}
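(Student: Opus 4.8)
The plan is to evaluate $\lambda(B_{\pi,\psi})$ and $\mu(B_{\pi,\psi})$ separately and then compare them term by term. Throughout I would use: the transformation rule \eqref{formula::Bessel  Transformation Rule}; Lemma \ref{lemma:: generic charcter}, which gives $\psi|_{U(F)}=1$ and $\psi\circ\sigma=\psi$, so that translation of $B_{\pi,\psi}$ by $U(F)$ on either side, as well as the twisted translation $g\mapsto ug\sigma(u)^{-1}$ by $u\in U(E)$, all leave $B_{\pi,\psi}$ unchanged; and the support results of Section \ref{section::Bessel} applied to $G$ over $E$. For $\lambda$, I would use the $F$-rational Bruhat decomposition $G(F)=\bigsqcup_{w\in W}B(F)n_wU^-_w(F)$ (the chosen representatives $n_w$ lie in $N_G(S)(F)\subset G(F)$). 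Writing $h=tun_wu^-$ with $t\in T(F)$, $u\in U(F)$, $u^-\in U^-_w(F)$, the transformation rule and $\psi|_{U(F)}=1$ give $B_{\pi,\psi}(h)=B_{\pi,\psi}(tn_w)$, and for each fixed $(w,t)$ there are exactly $|U(F)|\,|U^-_w(F)|$ such $h$; by Propositions \ref{proposition::Bessel w--support} and \ref{proposition:: Bessel Aw} only $w\in\mathcal{B}(G)$ and $t\in A_w(F):=A_w(E)\cap T(F)$ contribute, whence
\begin{align*}
\lambda(B_{\pi,\psi})=\sum_{w\in\mathcal{B}(G)}|U(F)|\,|U^-_w(F)|\sum_{t\in A_w(F)}B_{\pi,\psi}(tn_w).
\end{align*}

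For $\mu$, Proposition \ref{lemma::Norm one elements} realizes $X_\sigma$ as a disjoint union of twisted $U(E)$-orbits, one for each $n\in N_G(T)(E)$ with $\sigma(n)=n^{-1}$ (unique in its orbit, by uniqueness in the Bruhat decomposition). Since $B_{\pi,\psi}$ is constant on twisted $U(E)$-orbits, $\mu(B_{\pi,\psi})=\sum_n|U(E)|\,|\mathrm{Stab}(n)|^{-1}B_{\pi,\psi}(n)$, where $\mathrm{Stab}(n)=\{u\in U(E):n^{-1}un=\sigma(u)\}$. By Corollary \ref{cor::Involution-Bessel}, $B_{\pi,\psi}(n)\neq0$ forces $\iota_G(n)=n^{-1}$, so $\sigma(n)=\sigma_0(\iota_G(n))=\sigma_0(n^{-1})$, and the condition $\sigma(n)=n^{-1}$ becomes simply $n\in G(F)$, i.e. $n\in N_G(S)(F)$; then Proposition \ref{proposition:: Bessel Aw} forces $n=tn_w$ with $w\in\mathcal{B}(G)$ and $t\in A_w(F)$ — exactly the index set occurring for $\lambda$. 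It therefore remains to prove $|U(E)|\,|\mathrm{Stab}(tn_w)|^{-1}=|U(F)|\,|U^-_w(F)|$ for each such pair.

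This identity is the crux. For $w\in\mathcal{B}(G)$ one has $\iota_G(n_w)=n_w^{-1}$ (since $w^\star=w$, by Theorem \ref{theorem::Involution} and Proposition \ref{proposition::Description of w}) and $n_w\in G(F)$, hence $\sigma(n_w)=n_w^{-1}$, so $n_w\sigma(n_w)=1$. For $t\in A_w(F)$ the defining relation $(tn_w)^{-1}u(tn_w)=\sigma(u)$ of $\mathrm{Stab}(tn_w)$ forces $u\in U(E)\cap n_wU(E)n_w^{-1}=U_{wI_w}(E)\subset L_{w\widetilde{I_w}}(E)$; since $n_w^{-1}tn_w$ is central in $L_{\widetilde{I_w}}$ (Proposition \ref{proposition::Description of w}) and $v:=n_w^{-1}un_w\in L_{\widetilde{I_w}}(E)$, the relation collapses to $n_wvn_w^{-1}=\sigma(v)$, independent of $t$, i.e. $v=(\mathrm{Ad}(n_w^{-1})\circ\sigma)(v)$. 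Since $\iota_G$ acts by $-w_0$ on relative roots and $\sigma_0$ acts trivially on them, $\sigma$ carries $U_{I_w}$ onto $U_{wI_w}=U_{-w_0I_w}$, and $\mathrm{Ad}(n_w^{-1})$ carries $U_{wI_w}$ back onto $U_{I_w}$; thus $G':=\mathrm{Ad}(n_w^{-1})\circ\sigma$ is a well-defined endomorphism of the connected unipotent group $U_{I_w}$, and $n_w\sigma(n_w)=1$ together with $\sigma^2=\sigma_0^2$ give $(G')^2=\sigma_0^2$. Hence $G'$ is a Frobenius endomorphism, so $|\mathrm{Stab}(tn_w)|=|U_{I_w}^{G'}|=q^{\dim U_{I_w}}=|U^+_w(F)|$ (recall $U^+_w=U_{I_w}$). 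As $|U(E)|=|U(F)|^2$ and $U(F)=U^+_w(F)U^-_w(F)$, this gives $|U(E)|\,|\mathrm{Stab}(tn_w)|^{-1}=|U(F)|^2|U^+_w(F)|^{-1}=|U(F)|\,|U^-_w(F)|$, and comparing the two sums term by term yields $\lambda(B_{\pi,\psi})=\mu(B_{\pi,\psi})$.

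I expect the main obstacle to be exactly this weight computation: verifying that $\mathrm{Stab}(tn_w)$ is independent of the torus factor $t$ (which uses that $t$ is central in the Levi $L_{w\widetilde{I_w}}$) and that it equals the fixed-point set of a genuine Frobenius on a connected unipotent group — the latter hinging on $\sigma=\sigma_0\circ\iota_G$ interchanging the two opposite-but-conjugate unipotent subgroups $U_{I_w}$ and $U_{wI_w}$, together with the identity $n_w\sigma(n_w)=1$. The rest — confirming that after the reductions both $\lambda$ and $\mu$ are indexed by the same pairs $(w,t)$ with $w\in\mathcal{B}(G)$, $t\in A_w(F)$, and that the local contributions are the stated products of orders of unipotent groups — is routine given the material of Sections \ref{section::Bessel} and \ref{section::opposition Shintani}.
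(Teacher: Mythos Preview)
Your argument is correct and follows the same overall architecture as the paper: decompose $\lambda$ and $\mu$ over orbits (Bruhat cells for $G(F)$, twisted $U(E)$-orbits in $X_\sigma$), use Corollary~\ref{cor::Involution-Bessel} to show the contributing representatives coincide, and then match the orbit sizes. The paper packages the first step as a $U(F)\times U(F)$-action on $G(F)$ rather than writing out $h=tun_wu^-$, but this is cosmetic.

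The one substantive difference is in the orbit-size identity, which the paper isolates as Proposition~\ref{proposition::Counting}. There the paper computes $|\mathrm{Stab}_{U(E)}(n)|$ by filtering $U_{\Phi_w^+}$ by root height, passing to the abelian quotients $\prod V_a$, and checking on each $V_a(E)$ that the induced $\tau$ is an $F$-linear involution tensored with $\Gal(E/F)$, whence $|V_a(E)^\tau|=|V_a(F)|$. Your route is more conceptual: after eliminating the torus factor via $t\in Z(L_{w\widetilde{I_w}})$, you recognise the stabilizer as $(U_{I_w})^{G'}$ for $G'=\mathrm{Ad}(n_w^{-1})\circ\sigma$, verify $(G')^2=\sigma_0^2$ from $\sigma(n_w)=n_w^{-1}$, and invoke the standard fact that a Steinberg endomorphism on a connected unipotent group has $q^{\dim}$ fixed points. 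This is cleaner and avoids the filtration bookkeeping; the paper's version has the virtue of being entirely self-contained. Either way the numerical outcome is $q^{|\widetilde{\Phi}^+|-\tilde l(w)}$, and the comparison $|U(E)|\,|\mathrm{Stab}|^{-1}=|U(F)|\,|U_w^-(F)|$ follows.
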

 \begin{proof}
 	There are actions of $U(F) \times U( F)$ and $U(E)$ on $G(F)$ and $X_{\sigma}$ respectively, given by
 	\begin{align}
 	(u_1,u_2)\cdot g & =  u_1 g u_2^{-1},      \quad  u_1,u_2 \in U(F),\  g \in G(F) , \\
      	u \cdot g    & =  \sigma (u)  g u^{-1},\quad nu \in U(E), \ g \in X_{\iota}. \label{formula::Twist Action}
 	\end{align}
 	By the transformation rule \eqref{formula::Bessel  Transformation Rule} of Bessel functions and Lemma \ref{lemma:: generic charcter}, $B_{\pi,\psi}$ takes the same value on every orbit of the two actions. 
 	
 	By the $F$-rational Bruhat decomposition for $G(F)$ and Proposition \ref{lemma::Norm one elements}, we can take $N_G(T)(F)$ and $N_G(T)(E) \cap X_{\sigma}$ respectively as a complete set of orbit representatives for the two actions.
 		
 	Set 
 	\begin{align*}
 	X_1 = & \{ n \in N_G(T)(F)\ |\ B_{\pi,\psi}( n ) \neq 0) \}; \\
 	X_2 = & \{ n \in N_G(T)(E)  \cap X_{\sigma}\ |\ B_{\pi,\psi} ( n ) \neq 0 \}.
 	\end{align*}
  By Corollary \ref{cor::Involution-Bessel}, we have $\iota_G( n ) =  n^{-1}$ for those $n\in N_G(T)(E)$ with $B_{\pi,\psi} (n) \neq 0$. This implies that the two sets $X_1$ and $X_2$ are equal.
 	 	
    To prove $\lambda(B_{\pi,\psi})  =  \mu(B_{\pi,\psi})$, it suffices to prove that, for each $n \in X_1 = X_2$, the orbits of $n $ under the two actions have the same number of elements. The proposition will then be completed after Proposition \ref{proposition::Counting} where we investigate the stabilizers of $n$ under the two actions.
  \end{proof}

\begin{prop}\label{proposition::Counting}
	Write $n = a n_w$ with $w$ Bessel relevant in $W$. Then
	\begin{align*}
	\left\lvert U(F) \times U(F) \right\rvert  & = \left\lvert U(E) \right\rvert = q^{2 |\widetilde{\Phi}^+|},  \\
	\left\lvert \mathrm{Stab}_{U(F) \times U(F)}(n) \right\rvert  &= \left\lvert \mathrm{Stab}_{U(E)} (n)  \right\rvert  = q^{ |\widetilde{\Phi}^+| - \tilde{l}(w)},
	\end{align*}
	where $\tilde{l}(w)$ is the length of $w$ viewed as an element in $\widetilde{W}$.
\end{prop}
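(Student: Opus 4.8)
The plan is to compute each of the two cardinalities by identifying the relevant stabiliser with the $F$- or $E$-points of one and the same connected unipotent $F$-group, namely $U^+_w = U \cap n_w^{-1} U n_w$, and then to read off the count from the Lang--Steinberg theorem. For the total cardinalities this is immediate: $U$ is connected unipotent of dimension $|\widetilde{\Phi}^+|$, hence split over the perfect field $F$, so $|U(k)| = |k|^{|\widetilde{\Phi}^+|}$ for every finite extension $k/F$; taking $k = F$ and $k = E$ and using $|E| = q^2$ gives $|U(F) \times U(F)| = q^{2|\widetilde{\Phi}^+|} = |U(E)|$.

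Next, the $U(F) \times U(F)$-stabiliser. Write $n = a n_w$ with $a \in A_w \subseteq T(F)$; since $a$ normalises $U$, the equation $u_1 n u_2^{-1} = n$ is equivalent to $u_1 = n u_2 n^{-1}$, so projection to the second coordinate identifies $\mathrm{Stab}_{U(F) \times U(F)}(n)$ with $\{\, u \in U(F) : n u n^{-1} \in U(F) \,\} = (U \cap n^{-1} U n)(F) = U^+_w(F)$. Over the separable closure $U^+_w$ is the product of the absolute root subgroups $U_\beta$ with $\beta \in \widetilde{\Phi}^+$ and $w^{-1}\beta \in \widetilde{\Phi}^+$, so $\dim U^+_w = |\widetilde{\Phi}^+| - \tilde{l}(w)$; hence $|U^+_w(F)| = q^{|\widetilde{\Phi}^+| - \tilde{l}(w)}$.

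Now the $U(E)$-stabiliser for the twisted action $u \cdot g = \sigma(u) g u^{-1}$. The equation $\sigma(u) n u^{-1} = n$ reads $\sigma(u) = n u n^{-1}$, and since $\sigma$ stabilises $U$ any such $u$ lies in $U(E) \cap n^{-1} U(E) n = U^+_w(E)$; thus the stabiliser sits inside $U^+_w(E)$. The key input is that the element $n$, on which the Bessel function does not vanish, satisfies $\iota_G(n) = n^{-1}$ by Corollary \ref{cor::Involution-Bessel}, and that $n \in N_G(T)(F)$; since $\sigma = \sigma_0 \circ \iota_G$ and $\sigma_0(n) = n$, this gives $\sigma(n) = n^{-1}$, that is, $n \in X_\sigma$, cf.\ \eqref{formula::X-sigma}. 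From $\sigma(n) = n^{-1}$ one checks that $\sigma(U^+_w) = U \cap n U n^{-1}$, so $\sigma' := \mathrm{Ad}(n^{-1}) \circ \sigma$ restricts to an endomorphism of $U^+_w$, and the stabiliser is precisely the fixed-point set $(U^+_w)^{\sigma'}$. A short computation --- once more using $\sigma(n) = n^{-1}$, $\iota_G^2 = 1$, and that $\iota_G$ and $\mathrm{Ad}(n^{-1})$ commute with $\sigma_0$ --- shows $(\sigma')^2 = \sigma_0^2$, the split $\mathbb{F}_{q^2}$-Frobenius on $U^+_w$. Therefore $\sigma'$ is a Frobenius endomorphism of the connected group $U^+_w$, and Lang--Steinberg, together with the point count for connected unipotent groups, yields $|(U^+_w)^{\sigma'}| = q^{\dim U^+_w} = q^{|\widetilde{\Phi}^+| - \tilde{l}(w)}$.

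The main obstacle I anticipate is exactly this last computation on the $U(E)$ side: one must verify that $\sigma' = \mathrm{Ad}(n^{-1}) \circ \sigma$ genuinely preserves $U^+_w$ and is an honest Frobenius endomorphism, so that the unipotent point count and Lang--Steinberg apply. Both facts hinge on the identity $\sigma(n) = n^{-1}$ and on keeping the $\mathrm{Ad}$-twist straight; the same $\mathrm{Ad}(n) \circ \sigma$ device already appears in the proof of Proposition \ref{lemma::Norm one elements}, so the mechanism is available, but the bookkeeping in the $(\sigma')^2$ computation is the delicate point. Everything else --- reducing both stabilisers to $U^+_w$ over the appropriate field and the dimension count $\dim U^+_w = |\widetilde{\Phi}^+| - \tilde{l}(w)$ --- is routine.
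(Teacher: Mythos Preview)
Your proof is correct. For the first three counts it coincides with the paper's argument. For $|\mathrm{Stab}_{U(E)}(n)|$ you take a genuinely different, more conceptual route: you observe that $(\sigma')^2 = \sigma_0^2$ (the standard $q^2$-Frobenius), so $\sigma' = \mathrm{Ad}(n^{-1})\circ\sigma$ endows $U^+_w$ with an $\BF_q$-structure, and then invoke the general fact that a connected unipotent group over a perfect field is split, so any $\BF_q$-form has exactly $q^{\dim}$ rational points. The paper instead works by hand: it exhibits a $\tau$-stable filtration of $U_{\Phi^+_w}$ by the root-height subgroups $U_{\phi_i}$ (using that $-w_I$ preserves $I_w$ and hence the height function on $[I_w]^+$), applies Lemma \ref{lemma::Quotient} to reduce to the graded pieces $\prod V_a(E)$, and then checks on each $V_a(E)\cong L_\alpha\otimes_F E$ that the twisted Frobenius is an $F$-linear order-$2$ map tensored with the generator of $\Gal(E/F)$, giving $|V_a(E)^\tau| = |V_a(F)|$. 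Your argument is shorter and sidesteps the combinatorics of $w = w_0 w_I$ and the explicit filtration; the paper's argument is more hands-on and in effect identifies the $\BF_q$-form of $U^+_w$ under $\sigma'$. One minor slip: in your description of $U^+_w$ over the separable closure you wrote $w^{-1}\beta \in \widetilde{\Phi}^+$ where it should be $w\beta \in \widetilde{\Phi}^+$, but since $\tilde l(w) = \tilde l(w^{-1})$ the dimension count is unaffected.
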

  \begin{proof}
  	We first recall some well-known facts that will be used in our argument (see \cite[\S 1.3]{Bushnell-Henniart-DerivedSubgroup}). Let $\phi$ be a closed subset of $\Phi^+$ and $U_{\phi}$ be the corresponding $F$-unipotent subgroup. For $a \in \Phi^+$, let $l_{\Delta} (a)$ be the number of simple roots, counting multiplicities, that appeared in the decomposition of $a$ as a sum of simple roots. So $l_{\Delta} (a) = 1$ if and only if $a \in \Delta$. Set $\phi_i = \{a \in \phi \ |\ l_{\Delta} (a) > i \}$. So we have a decreasing sequence of closed subsets of $\phi$,
  	\begin{align*}
  	\phi = \phi_0 \supseteq \phi_1 \supseteq \phi_2 \supseteq \cdots
  	\end{align*}
  	Let $U_{\phi_i}$ be the $F$-unipoptent subgroup corresponding to $\phi_i$. Then $U_{\phi_i}$ is a normal subgroup of $U_{\phi_{i-1}}$ and we have an $F$-isomorphism
    \begin{align}\label{formula::Counting-Quotient}
    \prod_{a\in \phi,\  l_{\Delta} (a) = i} V_a \cong U_{\phi_{i-1}} / U_{\phi_i}.
    \end{align}

  	 When $\phi = \Phi^+$, the discussion above, together with \eqref{formula::Va}, implies that 
  	 \begin{align}
  	   \lvert U(F) \rvert = q^{ |\shskip \widetilde{\Phi}^+|} \quad \text{and}\quad  \vert U(E)\vert = q^{2 |\shskip \widetilde{\Phi}^+ |}.
  	 \end{align}
  	 
  	 For the action of $U(F) \times U(F)$ on $G(F)$, the map $(n_1,n_2) \mapsto n_2$ is an isomorphism
  	    \begin{align*}
  	    \textup{Stab}_{U(F) \times U(F)}( n ) \cong U(F) \cap n_w^{-1} U(F) n_w.
  	    \end{align*}
  	Hence 
  	\begin{align}
  	  \lvert \mathrm{Stab}_{U(F) \times U(F)}(n) \rvert  =  \vert U_{\Phi^+_w} (F) \vert  =  q^{\vert \widetilde{\Phi}^+\vert  - \tilde{l}(w)}.
  	\end{align}
  	
  	For the action of $U(E)$ on $G(E)$, we see from \eqref{formula::Twist Action} that $u$ lies in the stabilizer of $n$ if and only if $ u = n^{-1} \sigma(u) n$, which implies that $u \in U_{\Phi_w^+} (E)$. Let $\tau$ denote the Frobenius endomorphism $u \ra n^{-1} \sigma (u ) n$ on $U_{\Phi_w^+}$. Note that $\tau$ induces an involution on $U_{\Phi_w^+} (E)$ which we still denote by $\tau$. Thus the stabilizer of $n$ in $U(E)$ is the fixed points of $\tau$ in $U_{\Phi_w^+} (E)$. 
  	
  	Let $I = I_w$ be the subset of simple roots associated to $w$ as in \eqref{formula::Iw}. Recall that $\Phi_w^+ =  [I_w]^+$, the positive roots that are linear combinations of elements in $I_w$. Let $\phi = [I_w]^+$ and $\phi_i$ be as above. By Proposition \ref{proposition::Description of w}, we can write $w = w_0 w_{I}$ with $w_I$ the unique long Weyl element associated to $I \subset \Delta$. Since $ - w_I (I) = I$, we have $l_{\Delta} (a)  =  l_{\Delta} (-w_I a)$ for all $a \in [I_w]^+$. Note that
  	\begin{align*}
  	  \tau ( U_a) \subset U_{ - w_I a}, \quad a \in [I_w]^+.
  	\end{align*}
  	Thus each $U_{\phi_i}$ is $\tau$-stable. By Lemma \ref{lemma::Quotient}, we then have
  	\begin{align*}
      \vert \mathrm{Stab}_{U(E)} (n)  \vert =  \prod \  \vert  (U_{\phi_{i-1}}(E) / U_{\phi_i}(E))^{\tau}  \vert .
  	\end{align*}
  	
  	By \eqref{formula::Counting-Quotient}, we now transport the action of $\tau$ to $\prod_{a\in \phi, l_{\Delta}(a) = i} V_a (E)$. For $a$ such that $-w_I a = a$, the action of $\tau$ on $V_a(E)$, which is identified with $V_a(F) \otimes_{F} E$ by \eqref{formula::Va}, is given by an $F$-linear map on $V_a(F)$ of order $2$ tensored with the nontrivial element in the Galois group $\Gal(E / F)$. So we get that $ \vert V_a (E)^{\tau} \vert  =  \vert V_a (F) \vert$. Therefore, by some elementary arguments, we can conclude easily that
  	\begin{align}
  	  \left\lvert \mathrm{Stab}_{U(E)} (n)  \right\rvert  =  \vert U_{[I_w]^+} (F)\vert  =  q^{\vert \widetilde{\Phi}^+ \vert - \tilde{l} (w)}.
  	\end{align}
  	\end{proof}

It turns out that it is easier to evaluate $\mu(B_{\pi,\psi})$. The arguments in \cite[Lemma 3.4]{Anan***-Matringe-BaseChange} works verbatim here. We include the proof here for the sake of completeness.

\begin{prop}\label{proposition:: Evaluaiton Mu}
	Let $\pi$ be an irreducible, $\psi$-generic representation of $G(E)$ which is a base change lift of a representation $\rho$ of $G^{op}(F)$. Then
	\begin{align}
	  \sum_{ g \in X_{\sigma} } B_{\pi,\psi} (g)  =  \varepsilon(\pi) \shskip \frac{\dim \rho}{\dim \pi} \shskip |X_{\sigma}|,
	\end{align}
	where $\varepsilon(\pi) = \pm 1$ is the quantity defined in Definition \ref{defn::Shantani bc} and Remark \ref{rem::Shintani bc}, and $\dim$ is the dimension of a representation (the prior condition $\psi = \psi^{\sigma}$ in Remark \ref{rem::Shintani bc} is guaranteed by Lemma \ref{lemma:: generic charcter}).
\end{prop}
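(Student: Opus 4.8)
The plan is to convert $\sum_{g\in X_\sigma}B_{\pi,\psi}(g)$ into a single evaluation of the $\psi$-Whittaker functional, to recognize the operator that appears as a scalar via Schur's lemma, and to pin down that scalar with the Shintani character identity.

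\emph{Setup.} I would write $\Lambda(W)=W(1)$ for the $\psi$-Whittaker functional on $\EuScript W(\pi,\psi)$ and $v_W=B_{\pi,\psi}$ for the normalized Whittaker vector, so that $B_{\pi,\psi}(g)=\Lambda(\pi(g)v_W)$ and hence $\sum_{g\in X_\sigma}B_{\pi,\psi}(g)=\Lambda(Tv_W)$ with $T:=\sum_{g\in X_\sigma}\pi(g)$. I would first check that the normalized intertwining operator $I_\sigma$ of Remark \ref{rem::Shintani bc} fixes $v_W$: it is $\BC$-linear with $I_\sigma^2=1$ and intertwines $\pi$ with $\pi^\sigma$, and since $\psi=\psi^\sigma$ by Lemma \ref{lemma:: generic charcter}, the function $I_\sigma v_W$ is again right- and left-$(U(E),\psi)$-equivariant with value $1$ at the identity, hence equals $B_{\pi,\psi}$ by the uniqueness of the Bessel function. (In particular $B_{\pi,\psi}$ is $\sigma$-invariant as a function on $G(E)$.)

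\emph{Main step.} Next I would use the bijection $G(E)/G(F)\cong X_\sigma$, $hG(F)\mapsto h\sigma(h)^{-1}$, together with $\pi(\sigma(h))=I_\sigma\pi(h)I_\sigma^{-1}$, to get $\pi\big(h\sigma(h)^{-1}\big)=\pi(h)I_\sigma\pi(h)^{-1}I_\sigma$, so that
\begin{align*}
T=\Big(\sum_{hG(F)\in G(E)/G(F)}\pi(h)\,I_\sigma\,\pi(h)^{-1}\Big)I_\sigma=S\,I_\sigma.
\end{align*}
Here $S:=\sum_{hG(F)}\tilde{\pi}\big(h(1\rtimes\sigma)h^{-1}\big)$ is the sum of $\tilde{\pi}(c)$ over the elements $c$ of the $\tilde{G}(E)$-conjugacy class of $1\rtimes\sigma$ (whose centralizer in $G(E)$ is $G(F)$). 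Since left translation by any $y\in G(E)$ permutes the cosets in $G(E)/G(F)$, the operator $S$ commutes with $\pi(G(E))$, so $S=\lambda\cdot\mathrm{Id}$ for a scalar $\lambda$ by Schur's lemma; taking traces, $\lambda\dim\pi=\mathrm{tr}(S)=|X_\sigma|\,\mathrm{tr}(I_\sigma)=|X_\sigma|\,\chiup_{\tilde{\pi}}(1\rtimes\sigma)$. As $\EuScript N(1)=1$, the Shintani identity of Definition \ref{defn::Shantani bc} (with the $\varepsilon(\pi)$ of Remark \ref{rem::Shintani bc}) gives $\chiup_{\tilde{\pi}}(1\rtimes\sigma)=\varepsilon(\pi)\chiup_{\rho}(1)=\varepsilon(\pi)\dim\rho$, hence $\lambda=\varepsilon(\pi)\tfrac{\dim\rho}{\dim\pi}|X_\sigma|$. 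Finally, using $I_\sigma v_W=v_W$ and $\Lambda(v_W)=1$,
\begin{align*}
\sum_{g\in X_\sigma}B_{\pi,\psi}(g)=\Lambda(Tv_W)=\Lambda(S\,I_\sigma v_W)=\lambda\,\Lambda(v_W)=\varepsilon(\pi)\,\frac{\dim\rho}{\dim\pi}\,|X_\sigma|.
\end{align*}

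\emph{Where the difficulty lies.} There is no analytic or structural obstacle here; this is the finite-field analogue of \cite[Lemma 3.4]{Anan***-Matringe-BaseChange} and goes through for the same reason, with everything reducing to Schur's lemma and the definition of Shintani base change. The only points needing care are bookkeeping ones: keeping the semidirect-product conventions for $\tilde{G}(E)=G(E)\rtimes A$ consistent, so that the rewriting $\pi(h\sigma(h)^{-1})=\pi(h)I_\sigma\pi(h)^{-1}I_\sigma$ and the identification of $S$ with a sum of $\tilde{\pi}$ over one conjugacy class are correct; verifying that the operator $I_\sigma$ fixing $v_W$ is precisely the one normalizing the extension $\tilde{\pi}$ that enters the character identity (the content of Remark \ref{rem::Shintani bc}); and the trivial evaluation $\EuScript N(1)=1$.
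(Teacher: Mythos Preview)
Your argument is essentially the paper's: your operator $S$ is precisely the paper's $T=\sum_{g\in X_\sigma}\pi(g)I_\sigma$, and both proofs apply Schur's lemma to it, compute its trace via the Shintani identity at $g=1$, and then evaluate on the Bessel vector using $I_\sigma B_{\pi,\psi}=B_{\pi,\psi}$. The paper verifies the intertwining property of $T$ directly from the $\sigma$-conjugation action on $X_\sigma$, whereas you pass through the coset bijection first, but this is the same computation.

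One correction: the bijection is $G(E)/G^{\sigma}\cong X_\sigma$ with $G^{\sigma}=G^{op}(F)$, not $G(E)/G(F)$; equivalently, the centralizer of $1\rtimes\sigma$ in $G(E)$ is $\{g:\sigma(g)=g\}=G^{op}(F)$. This is only a labeling slip and does not affect any step of your computation.
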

  \begin{proof}
  	Consider the operator 
  	\begin{align}\label{formula::T operator}
  	T = \sum_{ g \in X_{\sigma} } \pi (g) I_{\sigma}
  	\end{align}
  	defined on the Whittaker model $\EuScript W(\pi,\psi)$ of $\pi$, where $I_{\sigma}$ is defined in Remark \ref{rem::Shintani bc}. Then $T$ intertwines $\pi$ with itself. Indeed, for $x \in G(E)$,
  	\begin{align*}
  	T \pi (x) & = \sum_{ g \in X_{\sigma} } \pi (g) I_{\sigma} \pi (x) 
  	            = \sum_{ g \in X_{\sigma} } \pi (g \sigma(x))  I_{\sigma} \\
  	          & = \sum_{ g \in X_{\sigma} } \pi ( x x^{-1}g \sigma(x)) I_{\sigma} 
  	            = \sum_{ g \in X_{\sigma} } \pi (x) \pi (g) I_{\sigma}  = \pi (x) T.
  	\end{align*}
  	So $T$ is of the form $c(\pi)\cdot I$ for some $c(\pi) \in \BC^{\times}$. 
  	
  	If $\pi$ is a base change lift of a representation $\rho$ of $G^{op}(F)$, by \eqref{formula::Shintani bc-defn}, we have
  	\begin{align}
  	\textup{trace}\left( \pi(g) I_{\sigma} \right)  =  \varepsilon(\pi) \cdot \textup{trace }\rho (\EuScript N(g)), \quad g \in G(E).
  	\end{align}
  	Note that $X_{\sigma}$ is the $\sigma$-conjugacy class of $1$ in $G(E)$, hence its image under the Shintani norm map $\EuScript N$ is the conjugacy class of $1$ in $G^{op}(F)$. Taking traces on both sides of \eqref{formula::T operator}, we get
  	\begin{align}\label{formula:: Mu }
  	c(\pi) \cdot \dim \pi  = \varepsilon(\pi)  \dim \rho \cdot \lvert X_{\sigma} \rvert.
  	\end{align}
  	
  	Note that $ I_{\sigma}(B_{\pi,\psi}) = B_{\pi,\psi}$ by the uniqueness of the $\psi$-Bessel function. Thus
  	\begin{align*}
  	\sum_{ g \in X_{\sigma} } B_{\pi,\psi} (g) & = \sum_{ g \in X_{\sigma} } (\pi(g) I_{\sigma}B_{\pi,\psi}) (I)  = (TB_{\pi,\psi}) (I) 
  	            \\
  	          & = c(\pi)B_{\pi,\psi}(I) = c(\pi).
  	\end{align*}
   	\end{proof}

\begin{thm}
	Let the notations be as above. Let $\pi$ be an irreducible, $\psi$-generic representation of $G(E)$. Assume that $\pi$ is a base change lift of a representation $\rho$ of $G^{op}(F)$. Let $\lambda$ be the $G(F)$-invariant linear functional on the Whittaker model $\EuScript W(\pi,\psi)$ of $\pi$ defined by 
	\begin{align*}
	  \lambda (W)  =  \sum_{h \in G(F)}  W (h),
	\end{align*}
	then
	 \begin{align}
	   \lambda (B_{\pi,\psi})  =\varepsilon(\pi) \frac{\dim \rho}{\dim \pi} \frac{|G(E)|}{|G^{op}(F)|},
	 \end{align}
	where $B_{\pi,\psi}$ is the $\psi$-Bessel function of $\pi$ and $\varepsilon(\pi) = \pm 1$ is the quantity defined in Definition \ref{defn::Shantani bc} and Remark \ref{rem::Shintani bc}. In particular, $\pi$ is distinguished by $G(F)$.
	
\end{thm}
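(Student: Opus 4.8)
The plan is to stitch together the three propositions just established. First I would invoke Proposition~\ref{proposition:: lambda  =  mu}, which yields the identity $\lambda(B_{\pi,\psi}) = \mu(B_{\pi,\psi})$ with $\mu$ the Whittaker-type functional of~\eqref{formula::Whittaker lin-form}. The point left open in that proof, namely that for each $n = a n_w$ ($w\in\mathcal{B}(G)$) on which $B_{\pi,\psi}$ does not vanish the $U(F)\times U(F)$-orbit of $n$ in $G(F)$ and the $U(E)$-orbit of $n$ in $X_\sigma$ have equal cardinality, is exactly Proposition~\ref{proposition::Counting}: both orbits have size $q^{2|\widetilde{\Phi}^+|}/q^{|\widetilde{\Phi}^+| - \widetilde{l}(w)}$, while $B_{\pi,\psi}$ is constant on each orbit (transformation rule of Bessel functions and Lemma~\ref{lemma:: generic charcter}), so the two sums agree term by term. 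Along the way this uses the coincidence $X_1 = X_2$ of the support sets in $N_G(T)(E)$, which follows from Corollary~\ref{cor::Involution-Bessel}, i.e.\ from $\iota_G(n) = n^{-1}$ on the Weyl-type elements carrying a Bessel-type function (Proposition~\ref{proposition:: Bessel-  Involution}); this is where the duality involution enters.

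Next I would evaluate $\mu(B_{\pi,\psi})$ using Proposition~\ref{proposition:: Evaluaiton Mu}. Recall its mechanism: the operator $T = \sum_{g\in X_\sigma}\pi(g)I_\sigma$ on $\EuScript W(\pi,\psi)$ intertwines $\pi$ with itself, hence is a scalar $c(\pi)$ by Schur's lemma; taking traces and using the base change character identity~\eqref{formula::Shintani bc-defn} together with the fact that $\EuScript N$ sends the $\sigma$-conjugacy class of $1$ to the conjugacy class of $1$ gives $c(\pi) = \varepsilon(\pi)\frac{\dim\rho}{\dim\pi}|X_\sigma|$; and since $I_\sigma$ fixes $B_{\pi,\psi}$ (uniqueness of the Bessel function), evaluating $TB_{\pi,\psi}$ at the identity gives $\mu(B_{\pi,\psi}) = c(\pi)$. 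Combining with the first step, $\lambda(B_{\pi,\psi}) = \varepsilon(\pi)\frac{\dim\rho}{\dim\pi}|X_\sigma|$.

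It remains to compute $|X_\sigma|$. The map $xG^{\sigma}\mapsto x\sigma(x)^{-1}$ is a bijection $G^{\sigma^2}/G^{\sigma}\to X_\sigma$ (Section~\ref{section::opposition Shintani}); since $G^{\sigma^2} = G(E)$ and $G^{\sigma} = G^{op}(F)$, this gives $|X_\sigma| = |G(E)|/|G^{op}(F)|$, and substituting produces formula~\eqref{formula::Main Theorem}. For the final clause: $\dim\rho$ and $\dim\pi$ are positive and $\varepsilon(\pi) = \pm 1$, so the right-hand side is nonzero; hence $\lambda$ does not kill $B_{\pi,\psi}$, so $\lambda$ is a nonzero $G(F)$-invariant linear form on $\EuScript W(\pi,\psi)$, i.e.\ $\pi$ is $G(F)$-distinguished.

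Once the three propositions are in hand the assembly above is purely formal, so I do not expect an obstacle there. The real difficulty sits upstream, in Propositions~\ref{proposition:: lambda  =  mu} and~\ref{proposition::Counting}: the crux is that the double cosets in $U(F)\backslash G(F)/U(F)$ supporting the Bessel function match, with equal multiplicities, the relevant double cosets inside $X_\sigma$. That matching rests on $\iota_G(n) = n^{-1}$ for $n$ supporting a Bessel-type function, hence ultimately on Theorem~\ref{theorem::Involution}, and it is this structural coincidence --- not any computation --- that I would single out as the heart of the proof.
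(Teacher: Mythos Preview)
Your proposal is correct and follows exactly the paper's approach: the paper's proof simply cites Proposition~\ref{proposition:: lambda  =  mu}, Proposition~\ref{proposition:: Evaluaiton Mu}, and the bijection $G(E)/G^{op}(F)\cong X_\sigma$, and your write-up is a faithful (and more detailed) expansion of that assembly. You have also correctly identified that Proposition~\ref{proposition::Counting} is what closes the gap left in the proof of Proposition~\ref{proposition:: lambda  =  mu}, just as the paper indicates.
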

 \begin{proof}
 	The theorem follows from Proposition \ref{proposition:: lambda  =  mu}, \ref{proposition:: Evaluaiton Mu} and the fact that $G(E ) / G^{op}(F)  \cong X_{\sigma}$.
 	\end{proof}

\section{Contragredient and the duality involution $\iota_{G,\mathcal{P}}$}\label{section::remark}

\subsection{Contragredient in the finite field case}

Let $E/F$ be a quadratic extension of finite fields. In \cite{Prasad-Quadratic-Compositio}, Prasad proved that, for a connected reductive group $G$ over $F$, an irreducible representation $\pi$ of $G(E)$ which is a virtual sum of Deligne-Lusztig representations is distinguished by $G(F)$ if and only if 
\begin{align}\label{formula::Condition-Prasad}
 \pi^{\sigma_0} \cong \pi^{\vee},
\end{align}
where $\sigma_0$ is the Frobenius map associated to the $F$-structure of $G$ and $\pi^{\vee}$ denotes the contragredient representation of $\pi$.

 Recall that a prior condition for $\pi$ being a base change lift of some representation of $G^{op}(F)$ is that
\begin{align}\label{formula::Condition-Shintani}
 \pi \cong \pi^{\sigma},
\end{align}
with $\sigma = \sigma_0 \circ \iota_{G}$ the Frobenius map associated to the $F$-structure of $G^{op}$.

We show that, for generic representations, conditions \eqref{formula::Condition-Prasad} and \eqref{formula::Condition-Shintani} are equivalent.

\begin{prop}\label{prop::Contragredient finite}
	Let $\pi$ be an irreducible representation of $G(F)$ which is generic with respect to a nondegenerate character $\psi_{\mathcal{P}}$ associated to an $F$-pinning $\mathcal{P}$. Let $\iota_{G,\mathcal{P}}$ be the associated duality involution. Then
	\begin{align}
	\pi^{\iota_{G,\mathcal{P}}}   \cong   \pi^{\vee}.
	\end{align}
\end{prop}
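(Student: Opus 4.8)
The plan is to deduce the proposition from the stronger pointwise identity $B_{\pi,\psi}(\iota(g)) = B_{\pi,\psi}(g^{-1})$ for all $g \in G(F)$, where $\psi = \psi_{\mathcal{P}}$, $\iota = \iota_{G,\mathcal{P}}$ and $B_{\pi,\psi}$ is the normalized Bessel function of Section~\ref{section::Bessel}. Once this is available, the two sides are recognized as the Bessel functions of $\pi^{\iota}$ and of $\pi^{\vee}$ with respect to $\psi^{-1}$, and since the Bessel function determines the isomorphism class (Section~\ref{section::Bessel}) the proposition follows.

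First I would set up the identification of these Bessel functions. Since $\psi$ takes values in roots of unity, $\psi^{-1} = \bar{\psi}$. A direct check on Whittaker data shows that $\pi^{\vee}$, realized on $V_{\pi}^{*}$, is $\psi^{-1}$-generic, its $\psi^{-1}$-Whittaker functional being evaluation at the $\psi$-Whittaker vector $v_{W}$ of $\pi$ and its $\psi^{-1}$-Whittaker vector being the $\psi$-Whittaker functional $\Lambda$ of $\pi$; with the normalization of Section~\ref{section::Bessel} this gives $B_{\pi^{\vee},\psi^{-1}}(g) = B_{\pi,\psi}(g^{-1})$. For $\pi^{\iota}$, realized on $V_{\pi}$, I would use that $\iota$ is defined over $F$ with $\iota(U) = U$ and $\psi(\iota(u)) = \psi(u)^{-1}$ for $u \in U(F)$ --- property $(1.4)$ recorded in Remark~\ref{rem::Involution=Shalika} --- so that the same $\Lambda$ and $v_{W}$ serve as $\psi^{-1}$-Whittaker functional and vector for $\pi^{\iota}$, whence $B_{\pi^{\iota},\psi^{-1}}(g) = B_{\pi,\psi}(\iota(g))$. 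Both are Bessel functions of irreducible $\psi^{-1}$-generic representations, so by Propositions~\ref{proposition::Bessel w--support} and~\ref{proposition:: Bessel Aw} (applied to the nondegenerate character $\psi^{-1}$) both are supported on $\bigcup_{w \in \mathcal{B}(G)} U(F) A_{w} n_{w} U(F)$.

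It then suffices to check that $B_{\pi^{\iota},\psi^{-1}}$ and $B_{\pi^{\vee},\psi^{-1}}$ agree at every element $a n_{w}$ with $w \in \mathcal{B}(G)$ and $a \in A_{w}$, since such elements together with the transformation rule~\eqref{formula::Bessel  Transformation Rule} for $\psi^{-1}$ pin down any function supported on those double cosets. This is immediate from Proposition~\ref{proposition:: Bessel-  Involution}, which asserts that $\iota(a n_{w}) = (a n_{w})^{-1}$ for exactly such $a$ and $w$:
\begin{align*}
 B_{\pi^{\iota},\psi^{-1}}(a n_{w}) = B_{\pi,\psi}(\iota(a n_{w})) = B_{\pi,\psi}\bigl((a n_{w})^{-1}\bigr) = B_{\pi^{\vee},\psi^{-1}}(a n_{w}).
\end{align*}
Hence $B_{\pi^{\iota},\psi^{-1}} = B_{\pi^{\vee},\psi^{-1}}$ and therefore $\pi^{\iota_{G,\mathcal{P}}} \cong \pi^{\vee}$.

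All the real content is packaged in Proposition~\ref{proposition:: Bessel-  Involution} --- that the composition of $\iota_{G,\mathcal{P}}$ with inversion fixes $A_{w} n_{w}$ pointwise for Bessel relevant $w$ --- which is already available from Section~\ref{section::Bessel}; so the only delicate point is the bookkeeping in the first step, i.e.\ matching the Whittaker data of $\pi^{\vee}$ and $\pi^{\iota}$ to those of $\pi$ and keeping $\psi$ and $\psi^{-1}$ straight throughout. I expect no obstacle beyond that: the argument is the finite field incarnation of the familiar principle that the Bessel function is invariant under the Shalika anti-involution $g \mapsto \iota_{G,\mathcal{P}}(g^{-1})$.
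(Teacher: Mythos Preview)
Your proposal is correct and follows essentially the same route as the paper: identify $B_{\pi^{\iota},\psi^{-1}}(g)=B_{\pi,\psi}(\iota(g))$ and $B_{\pi^{\vee},\psi^{-1}}(g)=B_{\pi,\psi}(g^{-1})$, then use the fact that $\iota$ acts by inversion on the Bessel support to conclude equality of Bessel functions and hence of representations. The only cosmetic differences are that the paper realizes the Bessel functions via a unitary inner product (so they are visibly matrix coefficients) and invokes Corollary~\ref{cor::Involution-Bessel} directly rather than unpacking it into Propositions~\ref{proposition::Bessel w--support}, \ref{proposition:: Bessel Aw} and~\ref{proposition:: Bessel-  Involution}.
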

\begin{proof}
	We show that the two representations $\pi^{\iota_{G,\mathcal{P}}}$ and $\pi^{\vee}$ have a common matrix coefficient. We shall write $\psi$ for $\psi_{\mathcal{P}}$ and $\iota_{G}$ for $\iota_{G,\mathcal{P}}$. Note that $\pi^{\iota_{G}}$ and $\pi^{\vee}$ are all generic with respect to $\psi^{-1}$. Let $\langle \cdot , \cdot \rangle$ be a $G(F)$-invariant inner product on $\pi$, and $v$ be the $\psi$-Whittaker vector in $\pi$ such that $\langle v, v \rangle  =  1$. By the definition of Bessel functions, we see that the $\psi^{-1}$-Bessel functions for $\pi^{\iota_{G}}$ and $\pi^{\vee}$ are
	\begin{align*}
	B_{\pi^{\iota_{G}},\psi^{-1}} (g) & =  \langle \pi^{\iota_{G}}(g) v, v \rangle = B_{\pi,\psi} (\iota_{G} (g)) \\
	B_{\pi^{\vee},\psi^{-1}}  (g)               & = \langle v, \pi(g) v \rangle =  B_{\pi,\psi} (g^{-1}). 
	\end{align*}
	By Corollary \ref{cor::Involution-Bessel}, we get that $B_{\pi^{\iota_{G}},\psi^{-1}}(g)  =  B_{\pi^{\vee},\psi^{-1 } }(g) $ for all $g \in G(F)$. As Bessel functions are matrix coefficients, $\pi^{\iota_{G}}$ and $\pi^{\vee}$ are isomorphic. 
\end{proof}

\subsection{Contragredient in $p$-adic case}

We consider the $p$-adic version of Proposition \ref{prop::Contragredient finite} in this section, which is conjectured in \cite[Conjecture 1]{Prasad-Involution}. Let $F$ be a $p$-adic field. We shall write $G$ for $G(F)$.

\begin{defn}
Let $(\pi,V)$ be an irreducible admissible representation of $G$ and $(\pi^{\vee},V^{\vee})$ be its contragredient. Let 
\begin{align*}
l  \colon  & V \ra \BC;  \\
l' \colon  & V^{\vee}  \ra \BC,
\end{align*}
be nonzero linear forms on $\pi$ and $\pi^{\vee}$. The relative character $B_{\pi,l,l'}$ is a distribution on $G$ defined by 
\begin{align}
B_{\pi,l,l'} (f)  =  l' (\pi (f) l),
\end{align}
for all $f \in C_c^{\infty} (G)$. 
\end{defn}

It is well known that the relative character of a representation characterizes the representation. See \cite[Proposition 3]{Prasad-SelfDual} for the proof of the following lemma.
\begin{lem}\label{lem::relative charac-}
Let $\pi_1$ and $\pi_2$ be two irreducible admissible representations of $G$. If a relative character of $\pi_1$ is equal to a relative character of $\pi_2$, then $\pi_1$ is isomorphic to $\pi_2$.
\end{lem}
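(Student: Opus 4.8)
The plan is to recover each representation from its relative character by realizing it as a quotient of the Hecke algebra $\mathcal{H} = C_c^\infty(G)$, with the quotient being visibly determined by the relative character alone. Write $B_i = B_{\pi_i, l_i, l_i'}$ for $i = 1, 2$ and assume $B_1 = B_2$ as distributions on $G$. First I would record the elementary but essential fact that, although the period functional $l_i$ need not be smooth, $\pi_i^\vee(f) l_i$ does land in the smooth contragredient $\pi_i^\vee$ for every $f \in \mathcal{H}$ (write $f = e_K * f''$ with $e_K$ the normalized idempotent of a sufficiently small compact open subgroup $K$, so that $\pi_i^\vee(e_K) l_i$ is already $K$-fixed). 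The resulting map $\Theta_i \colon \mathcal{H} \to \pi_i^\vee$, $f \mapsto \pi_i^\vee(f) l_i$, is then $G$-equivariant for the left-translation action of $G$ on $\mathcal{H}$; its image is a nonzero $G$-submodule of the irreducible $\pi_i^\vee$, hence all of it; and therefore $\Theta_i$ induces a $G$-isomorphism $\mathcal{H}/N_i \cong \pi_i^\vee$, where $N_i = \ker \Theta_i$. It now suffices to prove $N_1 = N_2$.

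The heart of the matter is the claim that, for $f \in \mathcal{H}$, one has $\pi_i^\vee(f) l_i = 0$ if and only if $B_i(f' * f) = 0$ for every $f' \in \mathcal{H}$. Granting this, the right-hand condition involves only $B_i$, which by hypothesis does not depend on $i$, so $N_1 = N_2$; then $\pi_1^\vee \cong \mathcal{H}/N_1 = \mathcal{H}/N_2 \cong \pi_2^\vee$, and taking smooth duals once more (legitimate since the contragredient of an irreducible admissible representation is again irreducible admissible) yields $\pi_1 \cong \pi_2$. To prove the claim, expand $B_i(f' * f) = l_i'\bigl( \pi_i^\vee(f')\, \pi_i^\vee(f)\, l_i \bigr)$; the vanishing of $B_i(f' * f)$ for all $f'$ says precisely that the vector $\xi := \pi_i^\vee(f) l_i \in \pi_i^\vee$ is killed by every functional $l_i' \circ \pi_i^\vee(f')$. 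By the same smoothening remark each such functional is a \emph{smooth} functional on $\pi_i^\vee$, i.e.\ an element of $(\pi_i^\vee)^\vee = \pi_i$ (here admissibility of $\pi_i$ is used); as $f'$ ranges over $\mathcal{H}$ these span a nonzero $G$-submodule of the irreducible $\pi_i$, hence all of $\pi_i$, and $\pi_i$ separates the points of $\pi_i^\vee$. Hence $\xi = 0$, proving the claim and the lemma.

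The step I expect to require the most care is precisely the bookkeeping around smooth versus non-smooth functionals: the period functionals $l_i$ and $l_i'$ are a priori only linear forms, so one is not entitled to view $\Theta_i$ as valued in $\pi_i^\vee$ or to view $l_i' \circ \pi_i^\vee(f')$ as an element of $\pi_i$ until one invokes, on the one hand, that convolving any functional with an element of $\mathcal{H}$ produces a smooth functional, and on the other hand the admissibility of $\pi_i$, which identifies $(\pi_i^\vee)^\vee$ with $\pi_i$. Once those two points are cleanly set up, the rest is a routine manipulation in the Hecke-module picture; this is the argument of \cite[Proposition 3]{Prasad-SelfDual}.
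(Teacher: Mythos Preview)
The paper does not give its own proof of this lemma; it simply refers to \cite[Proposition 3]{Prasad-SelfDual}. Your argument is correct and is essentially that standard proof: realize $\pi_i^{\vee}$ as a quotient of the Hecke algebra via $f \mapsto \pi_i^{\vee}(f)\,l_i$, and show that the kernel is determined by $B_i$ alone through $N_i = \{\, f : B_i(f' * f) = 0 \text{ for all } f' \in \mathcal{H}\,\}$. Your attention to the smoothness and admissibility bookkeeping (so that $\pi_i^{\vee}(f)\,l_i$ lands in the smooth dual and $(\pi_i^{\vee})^{\vee} = \pi_i$ separates points of $\pi_i^{\vee}$) is exactly what is needed.
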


\begin{lem}\label{lem::2}
Let $H$ be a subgroup of $G$ and $\chiup$\shskip \shskip, $\chiup^{\vee}$ be two characters of $H$. Let $l$ and $l'$ be nonzero linear forms on $\pi$ and $\pi^{\vee}$ that are $(H,\chiup)$ and $(H, \chiup^{\vee})$ invariant, respectively, that is,
\begin{align}
l ( \pi (h) v)  &=  \chiup (h) l (v),\quad \text{for all } v \in V;\label{formula::l} \\
l' (\pi^{\vee} (h) \varphi) &= \chiup^{\vee} (h) l'(\varphi),\quad \text{for all }\varphi \in V^{\vee}.\label{formula::l'}
\end{align}
Then the relative character $B = B_{\pi,l,l'}$ satisfies
\begin{align}
L_h B (f)  &=  \chiup^{\vee} (h^{-1}) B (f); \\
R_h B (f)  &=  \chiup (h^{-1}) B (f). 
\end{align}
\end{lem}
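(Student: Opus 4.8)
The plan is to unwind the definition $B(f)=l'(\pi(f)l)$ and to transfer a translation of the test function $f$ onto the variable of integration, where the equivariance hypotheses \eqref{formula::l} and \eqref{formula::l'} apply directly. Recall that $\pi(f)l=\int_G f(g)\,\pi(g)l\,dg$, with $G$ acting on linear forms through the contragredient action $(\pi(g)l)(v)=l(\pi(g^{-1})v)$; since $f\in C_c^\infty(G)$ and $\pi$ is admissible, $\pi(f)l$ lies in $V^{\vee}$, so $l'(\pi(f)l)$ is defined. I will fix the conventions $(R_hf)(x)=f(xh)$ and $(L_hf)(x)=f(h^{-1}x)$ on functions, and correspondingly $R_hB(f)=B(R_{h^{-1}}f)$ and $L_hB(f)=B(L_{h^{-1}}f)$ on distributions; since $G$ is unimodular, the changes of variables used below preserve Haar measure.

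For the right-translation identity I would compute $\pi(R_{h^{-1}}f)l=\int_G f(gh^{-1})\,\pi(g)l\,dg$ and substitute $g\mapsto gh$, which rewrites it as $\int_G f(g)\,\pi(g)\big(\pi(h)l\big)\,dg=\pi(f)\big(\pi(h)l\big)$; the extra factor now acts on the \emph{inner} form $l$. As $(\pi(h)l)(v)=l(\pi(h^{-1})v)$ and $h^{-1}\in H$, hypothesis \eqref{formula::l} gives $\pi(h)l=\chiup(h^{-1})l$, and this scalar pulls out through $l'$, so $R_hB(f)=\chiup(h^{-1})B(f)$.

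For the left-translation identity I would instead compute $\pi(L_{h^{-1}}f)l=\int_G f(hg)\,\pi(g)l\,dg$ and substitute $g\mapsto h^{-1}g$; now the extra factor attaches on the outside, $\pi(L_{h^{-1}}f)l=\pi(h^{-1})\big(\pi(f)l\big)$, and since $\pi(f)l\in V^{\vee}$ this is $\pi^{\vee}(h^{-1})\big(\pi(f)l\big)$. Applying \eqref{formula::l'} to $l'$ at $h^{-1}\in H$ extracts $\chiup^{\vee}(h^{-1})$, so $L_hB(f)=\chiup^{\vee}(h^{-1})B(f)$; equivalently, this is the previous identity applied to the triple $(\pi^{\vee},l',l)$. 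The computations are entirely routine, and the only thing that demands care — the sole ``obstacle'' — is the bookkeeping of left versus right and of the inverse introduced by the contragredient action, so that the characters emerge as $\chiup(h^{-1})$ and $\chiup^{\vee}(h^{-1})$ rather than as their reciprocals.
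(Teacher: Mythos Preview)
Your argument is correct and is exactly the routine calculation the paper alludes to; the paper itself omits the details entirely, simply stating that the lemma ``follows from a routine calculation.'' Your careful tracking of the contragredient action and the sign conventions for $L_h$ and $R_h$ on distributions is precisely what is needed, and the computation matches the expected one.
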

\begin{proof}
This follows from a routine calculation, so we omit the details.
\end{proof}

The following proposition, slightly general than Lemma 4 in \cite{Prasad-SelfDual}, holds with exactly the same proof. The author thanks D. Prasad for providing us with the proof.
\begin{prop}\label{prop::Prasad}
Let $H$, $\chiup$ and $\chiup^{\vee}$ be as above. Assume that there is an involution $\iota$ on $G$ such that any distribution $D$ on $G$ satisfying
\begin{align}\label{formula::Distribution--Invariant}
L_{h_1}R_{h_2} D = \chiup^{\vee} (h_1^{-1}) \chiup (h_2^{-1} )  D
\end{align}
is invariant under the anti-involution $g \ra \iota (g^{-1})$, then for every irreducible admissible representation $\pi$ such that $\pi$ is $(H,\chiup)$-distinguished and that $\pi^{\vee}$ is $(H,\chiup^{\vee})$-distinguished, we have
\begin{align}
\pi^{\vee}    \cong  \pi^{\iota}.
\end{align}
\end{prop}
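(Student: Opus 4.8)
The plan is to adapt, essentially word for word, Prasad's proof of Lemma~4 in \cite{Prasad-SelfDual}; the only extra bookkeeping is to carry the characters $\chiup$ and $\chiup^{\vee}$ along. Since $\pi$ is $(H,\chiup)$-distinguished and $\pi^{\vee}$ is $(H,\chiup^{\vee})$-distinguished, I would choose nonzero linear forms $l$ on $V$ and $l'$ on $V^{\vee}$ that are $(H,\chiup)$- and $(H,\chiup^{\vee})$-invariant, and form the relative character $B = B_{\pi,l,l'}$. This distribution is nonzero, because $\pi$ is irreducible and a matrix coefficient built from nonzero $l$, $l'$ cannot vanish identically; in particular it is a relative character of $\pi$ in the sense needed for Lemma~\ref{lem::relative charac-}.

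First I would apply Lemma~\ref{lem::2}: the equivariance of $l$ and $l'$ under $H$ shows that $B$ satisfies exactly the two-sided transformation law \eqref{formula::Distribution--Invariant}, namely $L_{h_1}R_{h_2}B = \chiup^{\vee}(h_1^{-1})\chiup(h_2^{-1})B$. The standing hypothesis on $\iota$ then applies with $D = B$ and yields that $B$ is invariant under the anti-involution $\theta\colon g \mapsto \iota(g^{-1})$, i.e. $B(f\circ\theta) = B(f)$ for all $f \in C_c^{\infty}(G)$.

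The heart of the matter is to recognize $B\circ\theta$ as a relative character of another representation. Writing $B$ as integration against a matrix coefficient $g \mapsto \langle \pi(g)v, \xi\rangle$ attached to the chosen forms (using admissibility to realize $l'$, via $V \cong V^{\vee\vee}$, as a vector in $V$), the substitution $g \mapsto \iota(g^{-1})$ --- which preserves Haar measure since $G$ is unimodular and $\iota$ is an automorphism --- converts this into integration against $g \mapsto \langle \pi(\iota(g^{-1}))v,\xi\rangle = \langle v, (\pi^{\iota})^{\vee}(g)\xi\rangle$. Hence $B\circ\theta$ is a nonzero relative character of $(\pi^{\iota})^{\vee}$. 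Since $B\circ\theta = B$ is simultaneously a relative character of $\pi$, Lemma~\ref{lem::relative charac-} forces $\pi \cong (\pi^{\iota})^{\vee}$. Because $\iota(g^{-1}) = \iota(g)^{-1}$ one has $(\pi^{\iota})^{\vee} \cong (\pi^{\vee})^{\iota}$, and applying the contragredient functor to $\pi \cong (\pi^{\vee})^{\iota}$ gives the desired $\pi^{\vee} \cong \pi^{\iota}$.

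The structural steps above are forced; I expect the only delicate points to be analytic housekeeping --- checking that $l'$ may be taken smooth (equivalently, realized as a vector in $V$) so that $B$ is genuinely a relative character, and verifying that the change of variables $g \mapsto \iota(g^{-1})$ introduces no stray modulus character. Both are routine for reductive $p$-adic $G$, which is consistent with the claim that the proposition ``holds with exactly the same proof'' as Prasad's lemma.
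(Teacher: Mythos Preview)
Your overall strategy matches the paper's: form the relative character $B=B_{\pi,l,l'}$, use Lemma~\ref{lem::2} to place it under the hypothesis, and then identify the $\theta$-twisted distribution as a relative character of another representation so that Lemma~\ref{lem::relative charac-} applies. The paper carries this out by splitting $\theta$ into its two pieces: it shows $f\mapsto B(f^{\iota})$ is a relative character of $\pi^{\iota}$ (immediate) and $f\mapsto B(f^{\vee})$ is a relative character of $\pi^{\vee}$ (this is the identity $l(\pi^{\vee}(f)l')=l'(\pi(f^{\vee})l)$), and then concludes $\pi^{\iota}\cong\pi^{\vee}$ directly.

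However, your handling of the ``analytic housekeeping'' contains a genuine misconception. You propose to realize $l'$ as a vector in $V$ via $V\cong V^{\vee\vee}$ and thereby write $B$ as integration against an honest matrix coefficient. This fails: the isomorphism $V\cong V^{\vee\vee}$ is between \emph{smooth} duals, and the invariant functionals $l,l'$ are typically \emph{not} smooth --- the Whittaker functional, which is the case of interest for the corollary, is the standard example of a non-smooth linear form. So one cannot ``take $l'$ smooth''; this is not housekeeping but the actual content to be proved.

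The paper addresses exactly this point. Rather than forcing $l,l'$ to be smooth, it proves the needed identity $l(\pi^{\vee}(f)l')=l'(\pi(f^{\vee})l)$ for arbitrary (possibly non-smooth) $l,l'$ by choosing a compact open $K$ with $e_K\ast f=f\ast e_K=f$, observing that $e_K l$ and $e_K l'$ \emph{are} smooth (since $\pi(e_K)$ projects onto $K$-invariants), verifying the identity in the smooth case, and then using $(\pi(\varphi)l)(v)=l(\pi(\varphi^{\vee})v)$ to pass back. Once you replace your matrix-coefficient shortcut with this smoothing argument, your proof becomes correct and essentially identical to the paper's.
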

\begin{proof}
For such $\pi$, we can find linear forms $l$ and $l'$ as in \eqref{formula::l} and \eqref{formula::l'}, and form a relative character $B_{\pi,l,l'}$, which is a distribution satisfying \eqref{formula::Distribution--Invariant} by Lemma \ref{lem::2}.

 Note that $l$ and $l'$ can be viewed naturally as linear forms on $\pi^{\iota}$ and $(\pi^{\iota})^{\vee}$. Thus we have a relative character $B_{\pi^{\iota},l,l'}$ of $\pi^{\iota}$ defined by
\begin{align*}
  B_{\pi^{\iota},l,l'} (f)  =  l^{\prime} (\pi^{\iota} (f) l),
\end{align*}
for all $f \in C_c^{\infty} (G)$. One sees easily that
\begin{align}
  B_{\pi^{\iota},l,l'} (f)  =   B_{\pi,l,l'} (f^{\iota}),\quad  f \in C_c^{\infty} (G),
\end{align}
where $f^{\iota}$ is the function defined by $f^{\iota}(g) = f(\iota(g))$.

Reversing the roles of $\pi$ and $\pi^{\vee}$, we have a relative character $B_{\pi^{\vee},l',l}$ of $\pi^{\vee}$ defined by
\begin{align*}
  B_{\pi^{\vee},l',l} (f)  =  l(\pi^{\vee} (f) l^{\prime}),
\end{align*}
for all $f \in C_c^{\infty} (G)$. We claim that
\begin{align}\label{formula:: Comparison-Contra}
  B_{\pi^{\vee},l',l} (f)  =  B_{\pi,l,l'} (f^{\vee}), \quad  f \in C_c^{\infty} (G),
\end{align}
where $f^{\vee}$ is the function defined by $f^{\vee} (g) = f (g^{-1})$. If so, by our assumption for $B_{\pi,l,l^{\prime}}$, we then have $B_{\pi,l,l'} (f^{\iota})  =  B_{\pi,l,l'} (f^{\vee})$. Thus $\pi^{\iota}$ and $\pi^{\vee}$ have a common relative character, therefore they are isomorphic by Lemma \ref{lem::relative charac-}.

It remains to prove the identity \eqref{formula:: Comparison-Contra}, that is,
    \begin{align}\label{formula::goal}
      l(\pi^{\vee} (f) l^{\prime})  =  l^{\prime} ( \pi (f^{\vee})  l).
    \end{align}
 If $l$ and $l^{\prime}$ are smooth linear forms, the identity is clear as it amounts to
\begin{align*}
  \langle v_0, \pi^{\vee}(f) v_0^{\prime} \rangle  =  \langle \pi (f^{\vee}) v_0 , v_0^{\prime}  \rangle,
\end{align*}
where $\langle \cdot, \cdot \rangle$ is the natural bilinear form $\langle \cdot, \cdot \rangle  \colon  \pi \times \pi^{\vee}  \ra \BC$.

In general, choose a compact open subgroup $K \subset G$ such that $e_{K}$, the characteristic function of $K$, has the property that
\begin{align}\label{formula::smooth}
  e_{K} \ast f  =  f  \ast e_K  =f.
\end{align}
Observe that $C_c^{\infty}(G)$ operates both on $\pi$ and the space of linear forms on $\pi$ such that
\begin{align}\label{formula::action}
  ( \pi(\varphi) l ) (v)  =  l ( \pi(\varphi^{\vee})  v  ). 
\end{align}
By \eqref{formula::smooth} and \eqref{formula::action}, and \eqref{formula::goal} for $l$, $l^{\prime}$ replaced by $e_K l$ and $e_K l^{\prime}$, the identity \eqref{formula::goal} is true for general $l$, $l^{\prime}$.

\end{proof}

Now we turn to the specific case of generic representations. Let $G$ be a quasi-split reductive group with an $F$-pinning $\mathcal{P}$. Let $\psi_{\mathcal{P}}$ be the associated nondegenerate character on $U$, and $\iota_{G,\mathcal{P}}$ be the associated duality involution. In view of Remark \ref{rem::Involution=Shalika}, one of the main results in \cite{Shalika-MO} is the following theorem.

\begin{thm}[Shalika]\label{theorem::Shalika}
 Suppose that $F$ is a $p$-adic field. Let $D$ be a distribution on $G$ satisfying $Lu_1 Ru_2^{-1} D  =  \psi_{\mathcal{P}} (u_1u_2) D$, $u_1$, $u_2 \in U$. Set $\theta (g)  =  \iota_{G,\mathcal{P}} (g^{-1})$. Then
\begin{align}
D^{\theta}   =   D.
\end{align}
\end{thm}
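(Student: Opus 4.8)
The plan is to deduce the statement from Shalika's multiplicity-one argument in \cite{Shalika-MO}. That argument, carried out there for $G=\GL_n$ with the transpose anti-involution $g\mapsto {}^{t}g$, uses the anti-involution only through the five formal properties $(1.1)$--$(1.5)$ listed on page $174$ of that paper, and these have been verified for $\theta(g)=\iota_{G,\mathcal P}(g^{-1})$ in Remark \ref{rem::Involution=Shalika}; moreover the verification there is valid over a $p$-adic field, since $(1.1)$--$(1.3)$ are formal, $(1.4)$ follows from \eqref{formula::algebraic Whittaker} and \eqref{formula:: Involution on Weyl - One Element}, and $(1.5)$ follows from Corollary \ref{cor::Involution-Bessel}, none of which uses finiteness of $F$. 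So I would recall the structure of Shalika's proof in the present generality.

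First I would reduce to a statement along the relative Bruhat stratification $G=\bigsqcup_{w\in W}C(w)$. By Bernstein's localization principle one analyses a distribution $D$ with $L_{u_1}R_{u_2^{-1}}D=\psi_{\mathcal P}(u_1u_2)D$ stratum by stratum along the closure order on cells: on the open part of the stratum over $w$, the bi-$(U,\psi_{\mathcal P})$-equivariance is governed by the natural action of $U\cap n_w^{-1}Un_w$, and the nondegeneracy of $\psi_{\mathcal P}$ forces $D$ to vanish near $C(w)$ unless $n_w$ is \emph{$\psi_{\mathcal P}$-compatible}, meaning $\psi_{\mathcal P}(u)=\psi_{\mathcal P}(n_wun_w^{-1})$ for all $u\in U\cap n_w^{-1}Un_w$. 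This is the geometric lemma behind \cite{Shalika-MO}, whose finite-field analogue is the first assertion of Proposition \ref{proposition::Bessel w--support}. Hence $D$ is supported on the closure of $\bigcup_n UnU$, the union over the $\psi_{\mathcal P}$-compatible representatives $n\in N_G(T)(F)$, which by Propositions \ref{proposition::Bessel w--support} and \ref{proposition:: Bessel Aw} are exactly the elements $tn_w$ with $w$ Bessel relevant and $t\in A_w$.

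Next I would use that $\theta$ is compatible with this picture. By $(1.1)$--$(1.3)$, $\theta$ is an involutive anti-automorphism stabilising $U$, $B$ and $T$; by $(1.4)$, $\psi_{\mathcal P}\circ\theta=\psi_{\mathcal P}$ on $U$, and a short computation then shows $D^{\theta}$ satisfies the same bi-equivariance as $D$; and by $(1.5)$, equivalently Corollary \ref{cor::Involution-Bessel} (valid over any field), every $\psi_{\mathcal P}$-compatible representative $n$ satisfies $\iota_{G,\mathcal P}(n)=n^{-1}$, hence $\theta(n)=\iota_{G,\mathcal P}(n^{-1})=n$. Thus $\theta$ carries each relevant cell $UnU$ to itself and preserves the closure order, so it stabilises the stratification of $\operatorname{supp}(D)$. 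It remains to compare $D$ and $D^{\theta}$ one stratum at a time: on a single cell $UnU$ with $\theta(n)=n$, the equivariance, together with a Fourier-analytic computation over the successive abelian quotients of $U$ (the groups $U_{\phi_{i-1}}/U_{\phi_i}$ of Proposition \ref{proposition::Counting}), determines the restriction of $D$ up to a scalar attached to a $\theta$-fixed datum, and since $\theta(n)=n$ and $\psi_{\mathcal P}\circ\theta=\psi_{\mathcal P}$ that scalar is preserved, so $D$ and $D^{\theta}$ agree on the open stratum. Then $D-D^{\theta}$ is supported on a strictly smaller, $\theta$-stable closed union of cells, and descending induction on the stratification gives $D=D^{\theta}$.

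The step I expect to be the main obstacle is precisely this last inductive descent, which is the technical heart of \cite{Shalika-MO}: one must control distributions along the non-proper Bruhat stratification, handle the non-commutativity of $U$ in the cell-local analysis, and rule out that $\theta$ could act by $-1$ rather than $+1$ on the one-dimensional space of cell-local bi-equivariant distributions --- the point at which $\theta(n)=n$ and $\psi_{\mathcal P}\circ\theta=\psi_{\mathcal P}$ are used decisively. Since all the inputs are formal consequences of $(1.1)$--$(1.5)$, established in Remark \ref{rem::Involution=Shalika}, Shalika's proof carries over unchanged.
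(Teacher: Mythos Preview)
Your proposal is correct and aligns with the paper's treatment: the paper gives no proof of this theorem at all, simply attributing it to Shalika \cite{Shalika-MO} after observing in Remark~\ref{rem::Involution=Shalika} that the anti-involution $\theta(g)=\iota_{G,\mathcal P}(g^{-1})$ satisfies Shalika's axioms $(1.1)$--$(1.5)$. Your write-up goes further by sketching the internal structure of Shalika's argument (Bruhat stratification, localization, the cell-by-cell analysis, and the inductive descent), which the paper omits entirely; but the logical reduction---verify $(1.1)$--$(1.5)$ for $\theta$ and then invoke \cite{Shalika-MO}---is exactly the same.
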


\begin{cor}
Let $\pi$ be an irreducible admissible representation of $G$. Assume that $\pi$ is $\psi_{\mathcal{P}}$-generic. Then
\begin{align}
 \pi^{\iota_{G,\mathcal{P}}} \cong \pi^{\vee}.
\end{align}
\end{cor}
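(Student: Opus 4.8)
The plan is to deduce the corollary directly from Proposition~\ref{prop::Prasad} together with Shalika's Theorem~\ref{theorem::Shalika}, applied with $H=U$, the unipotent radical of the fixed Borel, and with the duality involution $\iota=\iota_{G,\mathcal{P}}$ as the involution on $G$. Recall from Section~\ref{section::Involution} that $\iota_{G,\mathcal{P}}=\iota_-\circ c_{G,\mathcal{P}}$ is a genuine involution of $G$ (a product of two commuting involutions), so it is a legitimate candidate for the $\iota$ in Proposition~\ref{prop::Prasad}.

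First I would set $\chi=\psi_{\mathcal{P}}$ and $\chi^{\vee}=\psi_{\mathcal{P}}^{-1}$, both viewed as characters of $U$, and verify the hypothesis of Proposition~\ref{prop::Prasad}: every distribution $D$ on $G$ with $L_{h_1}R_{h_2}D=\chi^{\vee}(h_1^{-1})\chi(h_2^{-1})D$ for $h_1,h_2\in U$ must be invariant under the anti-involution $\theta(g)=\iota_{G,\mathcal{P}}(g^{-1})$. Writing $h_2=u_2^{-1}$, this equivariance becomes $L_{u_1}R_{u_2^{-1}}D=\psi_{\mathcal{P}}(u_1)\psi_{\mathcal{P}}(u_2)D=\psi_{\mathcal{P}}(u_1u_2)D$, which is precisely the hypothesis of Theorem~\ref{theorem::Shalika}; and by Remark~\ref{rem::Involution=Shalika} the anti-involution there is exactly $\theta$. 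Hence Theorem~\ref{theorem::Shalika} yields $D^{\theta}=D$, so the hypothesis of Proposition~\ref{prop::Prasad} is satisfied. Next I would check the two distinction hypotheses: $\pi$ is $(U,\psi_{\mathcal{P}})$-distinguished, which is exactly the assumption that $\pi$ is $\psi_{\mathcal{P}}$-generic; and $\pi^{\vee}$ is $(U,\psi_{\mathcal{P}}^{-1})$-distinguished, which is the standard fact that the contragredient of a $\psi$-generic irreducible admissible representation is $\psi^{-1}$-generic (equivalently $\dim\operatorname{Hom}_U(\pi,\psi)=\dim\operatorname{Hom}_U(\pi^{\vee},\psi^{-1})$, from uniqueness of Whittaker models and the behavior of twisted Jacquet modules under the contragredient). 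With all hypotheses in place, Proposition~\ref{prop::Prasad} gives $\pi^{\vee}\cong\pi^{\iota_{G,\mathcal{P}}}$, which is exactly the assertion of the corollary.

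The substance of the argument lies entirely in the two invoked results, so I do not anticipate a real obstacle; the only point of care is the bookkeeping that aligns the left/right equivariance conventions of Proposition~\ref{prop::Prasad} with those of Theorem~\ref{theorem::Shalika}, so that one reads off $\chi=\psi_{\mathcal{P}}$ and $\chi^{\vee}=\psi_{\mathcal{P}}^{-1}$ (and not the reverse), together with the identification of $g\mapsto\iota_{G,\mathcal{P}}(g^{-1})$ with Shalika's anti-involution, already established in Remark~\ref{rem::Involution=Shalika}. As a side remark, the conclusion also re-proves that $\pi^{\vee}$ is $\psi_{\mathcal{P}}^{-1}$-generic, since $\pi^{\iota_{G,\mathcal{P}}}$ is $\psi_{\mathcal{P}}\circ\iota_{G,\mathcal{P}}=\psi_{\mathcal{P}}^{-1}$-generic by property $(1.4)$ of Remark~\ref{rem::Involution=Shalika}.
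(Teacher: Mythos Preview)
Your proposal is correct and follows exactly the paper's approach: the paper's proof is simply ``This follows directly from Proposition~\ref{prop::Prasad} and Theorem~\ref{theorem::Shalika},'' and you have spelled out precisely the bookkeeping needed to apply those two results. Your additional verification that $\pi^{\vee}$ is $\psi_{\mathcal{P}}^{-1}$-generic and your alignment of the equivariance conventions are the details the paper leaves implicit.
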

\begin{proof}
	This follows directly from Proposition \ref{prop::Prasad} and Theorem \ref{theorem::Shalika}.
	\end{proof}
As a special case of the corollary, we see that \emph{all generic representations of $G_2$, $F_4$ or $E_8$ are self-dual.}

\section*{Acknowledgements}

The author thanks Dipendra Prasad for many useful comments and suggestions for improving the paper. In particular, the last section grows out of his suggestion of using Lemma 4 in \cite{Prasad-SelfDual} to solve his conjecture on the contragredient and the duality involution. The author thanks the anonymous referee for his/her comments on the paper. Part of the work was done when the author attended ``Automorphic Representations and Langlands program'' summer school held in Soochow University. It is a pleasure for him to thank the organizers, Zhifeng Peng and Chung Pang Mok, for their warm hospitality.

	\bibliographystyle{alphanum}
	\bibliography{references}

\def\cprime{$'$} \def\cprime{$'$}
\begin{thebibliography}{CPSS2}

\bibitem[AC]{Arthur-Clozel}
J.~Arthur and L.~Clozel.
\newblock {\em Simple Algebras, Base Change, and the Advanced Theory of the
  Trace Formula.(AM-120)}, volume 120.
\newblock Princeton University Press, 2016.

\bibitem[AM]{Anan***-Matringe-BaseChange}
U.~K.~Anandavardhanan and N.~Matringe.
\newblock Test vectors for finite periods and base change.
\newblock {\em Advances in Mathematics}, 360:106915, 2020.

\bibitem[AP1]{Anandavardhanan-Prasad-Distinguish-SL2}
U.~K.~Anandavardhanan and D.~Prasad.
\newblock Distinguished representations for $\mathrm{SL}(2)$.
\newblock {\em Mathematical Research Letters}, 10(6):867--878, 2003.

\bibitem[AP2]{Anandavardhanan-Prasad-Peroid-SL2}
U.~K.~Anandavardhanan and D.~Prasad.
\newblock On the $\mathrm{SL}(2)$ period integral.
\newblock {\em American Journal of Mathematics}, 128(6):1429--1454, 2006.

\bibitem[AP3]{Anandavardhanan-Prasad-Distinguish-SLn}
U.~K.~Anandavardhanan and D.~Prasad.
\newblock Distinguished representations for $\mathrm{SL}(n)$.
\newblock {\em Math Research Letters}, 25(6):1695--1717, 2018.

\bibitem[BH]{Bushnell-Henniart-DerivedSubgroup}
C.~J.~Bushnell and G.~Henniart.
\newblock On the derived subgroups of certain unipotent subgroups of reductive
  groups over infinite fields.
\newblock {\em Transformation groups}, 7(3):211--230, 2002.

\bibitem[Bor]{Borel-LAG}
A.~Borel.
\newblock {\em Linear algebraic groups}, volume 126.
\newblock Springer Science \& Business Media, 2012.

\bibitem[BP]{Beuzart-Plessis-Distinguish-Galois}
R.~Beuzart-Plessis.
\newblock On distinguished square-integrable representations for Galois pairs
  and a conjecture of prasad.
\newblock {\em Inventiones mathematicae}, 214(1):437--521, 2018.

\bibitem[BT]{Bruhat-Tits-RG+LocalField-II}
F.~Bruhat and J.~Tits.
\newblock Groupes r{\'e}ductifs sur un corps local: II. sch{\'e}mas en groupes.
  existence d'une donn{\'e}e radicielle valu{\'e}e.
\newblock {\em Publications Math{\'e}matiques de l'IH{\'E}S}, 60:5--184, 1984.

\bibitem[Car]{Carter-1985}
R.~W.~Carter.
\newblock FINITE GROUPS OF LIE TYPE: Conjugacy classes and complex characters.
\newblock {\em Pure Appl. Math.}, 44, 1985.

\bibitem[CPSS1]{Cogdell-PS-Shahidi-Stability}
J.~W.~Cogdell, I.~I.~Piatetski-Shapiro, and F.~Shahidi.
\newblock Stability of $\gamma$-factors for quasi-split groups.
\newblock {\em Journal of the Institute of Mathematics of Jussieu},
  7(1):27--66, 2008.

\bibitem[CPSS2]{Cogdell-PS-Shahidi-PartialBessel}
J.~W.~Cogdell, I.~I.~Piatetski-Shapiro, and F.~Shahidi.
\newblock Partial bessel functions for quasi-split groups.
\newblock {\em Automorphic representations, L-functions and applications:
  progress and prospects}, 11:95--128, 2011.


\bibitem[DM]{Digne-Michel-Shintani+Lifting-Uniform}
F.~Digne and J.~Michel.
\newblock Fonctions $l$ des vari{\'e}t{\'e}s de Deligne-Lusztig et descente
  de Shintani.
\newblock {\em M{\'e}moires de la Soci{\'e}t{\'e} Math{\'e}matique de France},
  20:1--144, 1985.

\bibitem[Gel]{Gelfand-finite}
S.~I.~Gelfand.
\newblock Representations of the full linear group over a finite field.
\newblock {\em Mathematics of the USSR-Sbornik}, 12(1):13, 1970.

\bibitem[Gow]{Gow-1984}
R.~Gow.
\newblock Two multiplicity-free permutation representations of the general
  linear group $\mathrm{GL}(n,q^2)$
\newblock {\em Mathematische Zeitschrift}, 188(1):45--54, 1984.

\bibitem[HW]{Helminck-Wang-Involution}
A.~G.~Helminck and S.~P.~Wang.
\newblock On rationality properties of involutions of reductive groups.
\newblock {\em Advances in Mathematics}, 99(1):26--96, 1993.

\bibitem[Kaw1]{Kawanaka-Unitary+ShintaniLift}
N.~Kawanaka.
\newblock On the irreducible characters of the finite unitary groups.
\newblock {\em Journal of the Mathematical Society of Japan}, 29(3):425--450,
  1977.

\bibitem[Kaw2]{Kawanaka-Shintani-Arcata}
N.~Kawanaka.
\newblock Shintani lifting and Gelfand-Graev representations.
\newblock In {\em Proc. Symp. Pure Math}, volume~47, pages 147--163, 1987.

\bibitem[Lu1]{LuHengfeng-Prasad+Conj-GSp4}
H.~Lu.
\newblock The Prasad conjectures for $\mathrm{GSp}(4)$ and $\mathrm{PGSp}(4)$.
\newblock {\em arXiv preprint arXiv:1802.10336 v1}, 2018.

\bibitem[Lu2]{LuHengfeng-Prasad+Conj-U2}
H.~Lu.
\newblock The Prasad conjectures for $\mathrm{U}(2)$, $\mathrm{SO}(4)$ and $\mathrm{Sp}(4)$.
\newblock {\em Journal of Number Theory}, 2019.

\bibitem[Mil]{Milne-AlgGroup}
J.~S.~Milne.
\newblock {\em Algebraic groups: The theory of group schemes of finite type
  over a field}, volume 170.
\newblock Cambridge University Press, 2017.

\bibitem[MVW]{M-V-W}
C. M{\oe}glin, M.~F.~Vign{\'e}ras, and J.~L.~Waldspurger.
\newblock {\em Correspondances de Howe sur un corps $\mathrm{p}$-adique}, volume 1291.
\newblock Springer, 2006.

\bibitem[Pra1]{Prasad-Quadratic-Compositio}
D.~Prasad.
\newblock Distinguished representations for quadratic extensions.
\newblock {\em Compositio Mathematica}, 119(3):335--345, 1999.

\bibitem[Pra2]{Prasad-SelfDual}
D.~Prasad.
\newblock On the self-dual representations of a $\mathrm{p}$-adic group.
\newblock {\em International Mathematics Research Notices}, 1999(8):443--452,
  1999.

\bibitem[Pra3]{Prasad-Relative+LLC}
D.~Prasad.
\newblock A `relative' local langlands correspondence.
\newblock {\em arXiv:1512.04347}, 2015.

\bibitem[Pra4]{Prasad-Involution}
D.~Prasad.
\newblock GENERALIZING THE MVW INVOLUTION, AND THE CONTRAGREDIENT.
\newblock {\em Transactions of the American Mathematical Society}, 2018.

\bibitem[SFW]{Steinberg-Yale-1967}
R.~Steinberg
\newblock {\em Lectures on Chevalley groups}.
\newblock Yale University New Haven, 1967.

\bibitem[Sha1]{Shahidi-LocalCoefficients}
F.~Shahidi.
\newblock Local coefficients as artin factors for real groups.
\newblock {\em Duke Math. J}, 52(4):973--1007, 1985.

\bibitem[Sha2]{Shalika-MO}
J.~A.~Shalika.
\newblock The multiplicity one theorem for {${\rm GL}_{n}$}.
\newblock {\em Ann. of Math. (2)}, 100:171--193, 1974.

\bibitem[Shi]{Shintani}
T.~Shintani.
\newblock Two remarks on irreducible characters of finite general linear
  groups.
\newblock {\em Journal of the Mathematical Society of Japan}, 28(2):396--414,
  1976.

\bibitem[Spr]{Springer-LAG}
T.~A.~Springer.
\newblock {\em Linear algebraic groups}.
\newblock Springer Science Business Media, 2010.

\bibitem[SS]{Springer-Steinberg-ConjugacyClass}
T.~A.~Springer and R.~Steinberg.
\newblock Conjugacy classes.
\newblock In {\em Seminar on algebraic groups and related finite groups}, pages
  167--266. Springer, 1970.

\bibitem[Zha]{ZhangChong-Distinction-Depth0}
C.~Zhang.
\newblock Distinction of regular depth-zero supercuspidal L-packets.
\newblock {\em International Mathematics Research Notices},
  2018(15):4579--4601, 2017.

\end{thebibliography}
	
\end{document}